\newcommand{\p}{\mathcal{P}}
\newcommand{\ttt}{\mathcal{T}}
\newcommand{\LL}{\mathcal{L}}
\newcommand{\M}{\mathcal{M}}
\newcommand{\T}{^{\top}}
\newtheorem{theorem}{Theorem}
\newtheorem{lemma}{Lemma}
\theoremstyle{definition}
\newtheorem{definition}{Definition}
\newtheorem{assumption}{Assumption}
\theoremstyle{remark}
\newtheorem{remark}{Remark}
\renewenvironment{proof}[1][Proof]{\par\noindent\textit{#1.}\ }{\hfill$\blacksquare$\par\medskip}
\title{Anderson Accelerated Primal-Dual Hybrid Gradient for solving LP}
\author{
Yingxin Zhou \quad Cipolla Stefano \quad Phan T. Vuong \\[4pt]
\small School of Mathematical Sciences, University of Southampton \\
\small \texttt{yz11u24@soton.ac.uk}, \texttt{S.Cipolla@soton.ac.uk}, \texttt{T.V.Phan@soton.ac.uk}
}
\date{}
\begin{document}

\maketitle

\begin{abstract}
 {We present the Anderson Accelerated Primal--Dual Hybrid Gradient (AA-PDHG), a fixed-point-based framework that integrates Anderson Acceleration into the PDHG method for solving linear programming (LP) problems. A central motivation is to investigate whether Anderson Acceleration, which systematically exploits multi-step historical information, can serve as a viable alternative to the restart strategy for PDHG. We establish the global convergence of AA-PDHG under a safeguard condition and propose a filtered variant (FAA-PDHG) that enforces the uniform boundedness of the coefficient matrix through angle and length filtering, thereby providing a rigorous convergence guarantee.
Numerical experiments on LP instances derived from MIPLIB 2017 demonstrate that both AA-PDHG and FAA-PDHG deliver significant speedups over vanilla PDHG. On pre-solved MIPLIB instances, AA-PDHG is the fastest method on about $70$\% of the benchmark when neither method uses primal-weight updates, and remains competitive when both AA-PDHG and restart PDHG use their respective primal-weight update strategies, establishing Anderson Acceleration as a competitive alternative to the restart mechanism.}
\end{abstract}

\noindent\textbf{Keywords:} Anderson Acceleration, Primal Dual Hybrid Gradient method, fixed-point, Global Convergence, Linear Programming


\section{Introduction}\label{se:intro}
In recent years, first-order methods have become increasingly popular for solving large-scale convex optimisation problems, particularly due to their low per-iteration cost and favourable scalability. Among them, the Primal Dual Hybrid Gradient (PDHG) method has received considerable attention. If we consider a min-max problem in the form:
\[\min_{x} \max_{z} \; f(x) + z^{\T} K x - g^*(z),
\]
where \( f \) and \( g \) are proper, closed, and convex functions, \( g^* \) denotes the conjugate function of \( g \), and \( K \) is a linear operator, then the corresponding PDHG scheme can be written as
\begin{equation}\label{alg:PDHG_intro}
\left\{
\begin{aligned}
x^{k+1} &= \arg\min_{x} \left\{ f(x) + \frac{1}{2\tau} \left\| x - \left( x^k - \tau K^{\top} z^{k} \right) \right\|^2 \right\},\\
z^{k+1} &= \arg\min_{z} \left\{ g^*(z) + \frac{1}{2\sigma} \left\| z - \left( z^k + \sigma K \left(2 x^{k+1} - x^{k} \right) \right) \right\|^2 \right\},
\end{aligned}
\right.
\end{equation}
being $\tau,\;\sigma$ step-sizes. This algorithm is also commonly referred to as the  Chambolle–Pock algorithm \cite{ChambollePDHG}. When the updates in \eqref{alg:PDHG_intro} are specialised for the solution of linear programs (LP), see \eqref{eq:pdhg}, it is possible to observe a major advantage of PDHG when compared to other first or second-order optimisation methods, that is, it requires only matrix-vector multiplications at each iteration, thereby avoiding solution of linear systems and related expensive matrix factorisations. Such a characteristic is often referred to as matrix-free in the literature, as also discussed in~\cite{ChambollePDHG}. This characteristic, combined with the fact that the matrix-vector product is a highly parallelisable task, makes PDHG particularly attractive for large-scale applications. 

 {The use of PDHG for solving LP problems has attracted considerable attention in recent years}, see, e.g., \cite{PGLP87,SDLP89,AFOM-SPECIAL-LP11,ADMMLP17,AFOM-SPECIAL-LP19,fasterPDHG-23,google2022p,PDLPnew2025}, which have demonstrated that PDHG  {can} offer a more scalable alternative to classical LP methods, such as interior-point or simplex methods. In addition, \cite{LUGPU25} reported engineering results for restarted PDHG for linear programming from the GPU perspective. Related developments also include the work of {\cite{LiuCross24}}, who
studied the geometric behavior of PDHG for LP and proposed a crossover algorithm based on the
spiral trajectory of PDHG. { A related geometric analysis of PDHG was also provided by \cite{GEOMETRYPDHGLP-24}.} We also note that recent progress on large-scale LP is not limited to PDHG-based methods. For example, \cite{KCHPRLP26} proposed HPR-LP, which is based on a Halpern Peaceman Rachford splitting scheme with semi-proximal terms.

Nonetheless, when applied to LP problems, the PDHG method might still exhibit relatively slow convergence in practice. In particular, it often suffers from stagnation in the later stages of optimisation, especially when approaching optimal solutions.  This is because linear programming problems lack strong convexity, making PDHG achieve only sub-linear convergence rates \cite{first-order-beck}. As a result, obtaining  {meaningful} solutions within a reasonable computational time remains difficult, especially in large-scale or ill-conditioned scenarios.

 {Motivated by the above discussion, in this work, we explore the integration of Anderson Acceleration (AA) into the PDHG framework for solving LP problems more efficiently. A central motivation is the observation that the restart mechanism proposed by \cite{fasterPDHG-23}, while effective, represents only one possible strategy for accelerating PDHG. We investigate whether AA, which systematically exploits multi-step historical information from past iterates, can serve as a viable alternative acceleration mechanism, with the particular aim of achieving acceleration not only in the early stages of the iteration but also in the later stages.}

It is important to note that in some works it has been shown that LP problems possessing special properties, e.g., sharpness, see \cite{fasterPDHG-23}, or error bounds, see \cite{QEB-FE2023}, can support faster convergence rates for first-order methods. These properties can be leveraged to theoretically establish faster convergence rates for first-order methods applied to LPs.   {And indeed, much before the above-mentioned specialised LP approaches}, numerous accelerated variants of PDHG have been developed. Notable acceleration strategies include inertial techniques~\cite{IPDHG-16}, line search strategies~\cite{LINE-PDHG18}, coordinate updates~\cite{ACPDHG21}.  {As a result of the above-mentioned piece of literature, more in general, refined primal-dual schemes for LP problems have been proposed~\cite{google2022p,fasterPDHG-23,PDLPnew2025}. Among these, the restart strategy proposed by \cite{fasterPDHG-23} has emerged as a key ingredient and particularly effective approach to accelerate convergence. The restart mechanism periodically reinitialises the algorithm and leverages the sharpness structure of LP problems to achieve linear convergence from each restart epoch. This strategy has since been adopted as a key ingredient in practical first-order LP solvers \cite{google2022p,PDLPnew2025}.}

 {In contrast to the restart strategy, which discards historical information and reinitialises the iterates at prescribed intervals, Anderson Acceleration builds a low-rank model from multiple past iterates to compute improved updates. This fundamental difference in how historical information is utilised motivates the present investigation: we adopt a fixed-point perspective and study the use of AA to enhance the convergence behaviour of PDHG, with the aim of assessing whether such a history-based acceleration mechanism can serve as a viable alternative to the restart strategy of \cite{fasterPDHG-23}.}

Anderson Acceleration (AA) is a technique originally introduced in~\cite{AAoriginal} to accelerate fixed-point iterations. Since then, it has been widely adopted as an effective tool in a variety of applications. For a broader overview of its theoretical development and practical uses, we refer the interested reader to~\cite{AA-FPI-NI,AA-CON-TAK,AA-IMPROVE-PROOF-ECPRX,MR3841161,MR4926317}.

The core idea behind AA is to compute the next iterate as an affine combination of several previous iterates, thereby incorporating historical information to improve convergence. We  {summarise} the standard version of the AA method in Algorithm~\ref{al:AA-TYPE-II}.

\begin{algorithm}[!t]
\caption{Standard Anderson Acceleration with memory $m$ for solving $x=F(x)$}\label{al:AA-TYPE-II}
\begin{algorithmic}[1] 
\State  \textbf{Given} $F$: $\mathbb{R}^n\to \mathbb{R}^n$,  {an} initial point $x^0\in \mathbb{R}^n$, a damping parameter $\beta$, and a positive integer $m$. Set $x^1= F(x^0)$ and $G^0 = F(x^0)-x^0.$
\For{$k = 0, 1, \dots$}
    \State Set $m_k = \min \{m, k\}.$
    \State Set $\hat{G}^k=(G^{k-m_k},\ldots,G^{k})$, where $G^k=F(x^k)-x^k$.
    \State Solve the problem:\[
        \alpha^k:=\arg \min_{\alpha} \|\hat{G}^k\alpha\|,
        \text{\quad s.t. } \sum_{i=0}^{m_k} \alpha_i=1.
    \]
    \State Compute $x^{k+1}=(1-\beta)\sum_{i=0}^{m_k}\alpha^{k}_ix^{k-m_k+i}+\beta\sum_{i=0}^{m_k}\alpha^{k}_iF(x^{k-m_k+i}).$
\EndFor
\end{algorithmic}
\end{algorithm}

In Algorithm~\ref{al:AA-TYPE-II}, the scalar $0<\beta\leq1$ is a damping (mixing) factor, which is often set as~1 in most cases.  {  {Several works} have shown} that  {a smaller} choice of $\beta$  {can} help improve the  {numerical} stability and algorithm convergence in practice \cite{AACONTRACTIVE-21,AAMIXING19,dynamicBETAAA-25}. 

Anderson Acceleration (AA) has broad applicability. For instance, \cite{AA-FPI-NI} demonstrates that, for linear problems, AA is essentially equivalent to the generalized minimal residual (GMRES) method; see also \cite{MR4711308} for connections among Krylov methods and AA. Furthermore, \cite{AA-QUASI-NEWTON} and \cite{AA-NONLINEAR-21} interpret AA as a quasi-Newton method for nonlinear problems, thereby offering a theoretical basis for its acceleration properties.

 {Although} AA has demonstrated significant acceleration capability in practical applications, the theoretical analysis of its convergence has only been gradually developed in recent years. In \cite{AA-FPI-NI} it was observed that when the fixed-point iteration is governed by a contractive operator, the algorithm can achieve local linear convergence rates. \cite{EDIIS-19} further considers the so-called EDIIS algorithm as a special case of AA requiring $\alpha_{(k)} \geq 0,$ and proves its global convergence property under the case $F$ is contractive on a convex set. In \cite{AA-IMPROVE-PROOF-ECPRX}, the authors extended the result in \cite{AA-FPI-NI} for AA with damping factors, and observed that AA is more effective for linearly convergent fixed-point methods, whereas for algorithms with quadratic convergence, it may actually reduce the acceleration effect. In \cite{anderson-type-I} it was recently proposed a global convergence scheme for AA-I assuming the fixed-point mapping is non-expansive. Based on this, the scheme was further applied in \cite{anderson-accelerated-operatorsplitting}, where it served as a global convergence framework and was applied to splitting optimisation methods.   {Building on these developments}, we note that, as a member of the class of splitting methods, the PDHG algorithm can also be formulated as a fixed-point iteration. In particular, \cite{degenerate2021} investigated the convergence properties of PDHG in fixed-point form under degenerate conditions. 

\subsection{ {Contributions}}

 {While AA has been applied to various first-order algorithms -- including proximal gradient methods \cite{AA-PG-21}, extragradient methods \cite{EG-AA-24}, and general operator splitting \cite{anderson-accelerated-operatorsplitting} -- PDHG scheme combined with AA was proposed in \cite{RBNolinear23}.}   {However, the framework in \cite{RBNolinear23} addresses a general nonlinear setting and does not account for the specific challenges posed by LP. In particular, applying AA to PDHG for LP requires three key adaptations that are absent from prior work: (a)~an explicit projection step to restore feasibility after the AA update, since affine combinations of past iterates may violate the box and cone constraints inherent in LP; (b)~a diagonal preconditioning matrix $\widehat{D}$ within the quasi-Newton update, which exploits the structure of the constraint matrix to improve conditioning; and (c)~an analysis of the uniform boundedness of the AA coefficient matrix $H^k$, a condition that is assumed but not enforced or even discussed in \cite{RBNolinear23}. We address (c) through the FAA-PDHG variant, which algorithmically guarantees boundedness via angle and length filtering. Finally, no prior work has systematically compared AA-accelerated PDHG against the restart strategy on a large-scale LP benchmark.} The main contributions of this paper are summarised as follows.
\begin{enumerate}
    \item[\textbf{(i)}] We integrate Anderson Acceleration into the PDHG framework for LP and establish its global convergence under a safeguard condition (Theorem~\ref{th:con}).  {Moreover, as already mentioned, to} enhance numerical stability, we incorporate a diagonal correction $\widehat{D}$ into the AA update and, to ensure that the accelerated iterates remain feasible, we introduce an explicit projection step that addresses the key challenge arising when affine combinations of past iterates leave the feasible set.
    \item[\textbf{(ii)}] We propose a filtered variant (FAA-PDHG) that enforces the uniform boundedness of the coefficient matrix $H^k$ through angle and length filtering procedures (Theorem~\ref{th:FAA-convergence}), thereby providing a rigorous and algorithmically verifiable convergence guarantee.   {FAA-PDHG serves primarily as a theoretical device, demonstrating that the boundedness assumption required by the convergence analysis can be enforced at the algorithmic level. 
   In the numerical experiments, we employ the unfiltered AA-PDHG variant with moderate memory $m_A$, which was observed to be numerically stable while avoiding the overhead of explicit filtering.}
    \item[\textbf{(iii)}] From a theoretical standpoint, we highlight a key structural difference
    between the convergence guarantees of AA-PDHG and restart PDHG (rPDHG) \cite{fasterPDHG-23}. The convergence of rPDHG relies on problem-dependent quantities such as the sharpness
    constant, which is generally unknown and difficult to estimate in practice.
      {The global convergence of AA-PDHG is established under
    a safeguard condition and the assumption that the matrices $H^k$ are uniformly bounded.
    We note that both approaches rely on structural properties: rPDHG on sharpness (a problem-dependent quantity), and AA-PDHG on bounded $H^k$ (an algorithm-dependent quantity). 
    This gives a complementary type of convergence guarantee: whereas rPDHG exploits problem geometry through sharpness, whereas FAA-PDHG enforces an algorithmic boundedness condition on the Anderson coefficient matrices.}
    \item[\textbf{(iv)}] We conduct an extensive numerical comparison of AA-PDHG against rPDHG \cite{fasterPDHG-23} on   {$381$ LP relaxations from the} MIPLIB dataset.  {
    On the pre-solved instances, AA-PDHG achieves the fastest running time on approximately $70$\% of the benchmark when neither method uses primal-weight updates, and remains competitive when primal-weight updates are included. These results provide evidence that history-based acceleration via AA is a viable alternative to the restart mechanism for first-order methods applied to LP.}
\end{enumerate}

 {The remainder of this paper is organised as follows.}  {Section~\ref{se:Prelimi} recalls the necessary preliminary material.} In Section~\ref{se:problem-satement}, we present the formulation of the primal problem addressed in \cite{google2022p} and explain how its corresponding dual form is derived. In Section \ref{se:AA-PDHG}, we provide the specific formulation of the primal-dual problem in the form of a fixed-point iteration tailored to the given problem.  {The global convergence analysis is presented in Section~\ref{se:converge}.} In Section~\ref{se:FAA-PDHG}, we  {introduce FAA-PDHG, a filtered variant that ensures the uniform boundedness required for convergence.} Numerical  {experiments demonstrating the performance of the proposed algorithms} are presented in Section~\ref{se:NE}.

\section{Preliminaries}\label{se:Prelimi}

Throughout this work, $\mathbb{R}^n$ is the $n$-dimensional space whereas $\mathbb{R}^n_+ := \{ x \in \mathbb{R}^n \mid x_i \geq 0, \forall i = 1, \dots, n \}$ and $x^{\top}$ will denote the transpose of a vector $x$. Moreover, we define the distance of a point $x
$ from a {closed} set $\Omega$ as $\text{dist}(x, \Omega) := \min \{\, \| x - z\| ,\; z \in \Omega \,\}$. 
$\|\cdot\|$ will be used to denote the ${\ell}_2$-norm with the inner product $\langle\cdot,\cdot\rangle$ and the inner product induced by the  {quadratic form associated to the} operator $\mathcal{M}$ is defined as $\langle u,v\rangle_{\mathcal{M}}=\langle \mathcal{M}u,v\rangle$, where $\M$ is a bounded, positive semi-definite operator. The metric-induced norm $\|u\|_{\mathcal{M}}=\sqrt{\langle \mathcal{M}u,u\rangle}=\sqrt{\langle u,u\rangle_\mathcal{M}}$. Note that if $\M$ is only positive semi-definite, it is possible that $\|u\|_\mathcal{M}=0$ while $u \neq 0$. On the other hand, we will restrict our attention to the case where $\M$ is positive definite. Besides, we call $ f:\mathbb{R}^n\to(-\infty,\infty) $ a proper function if $f(x) < \infty $ for at least one $ x \in \mathbb{R}^n $, and $ f(x) > -\infty $ for all $ x \in \mathbb{R}^n $.
We denote by $\mathcal{I}$ the identity operator.

Let \( \mathcal{C} \subseteq \mathbb{R}^n \) be a nonempty, closed, and convex set. The indicator function $\mathcal{I}_{\mathcal{C}}$ of set \( \mathcal{C} \)  {is defined} as: \[
\mathcal{I}_{\mathcal{C}}(x) =
\begin{cases}
0, & x \in \mathcal{C}, \\
+\infty, & x \notin \mathcal{C}.
\end{cases}
\]

\begin{definition}[Projection Operator \cite{first-order-beck}]\label{de:projection}
    \( \p_{\mathcal{C}}: \mathbb{R}^n \to \mathcal{C} \) is a mapping that assigns to each \( x \in \mathbb{R}^n \) the closest point in \( \mathcal{C} \), given by:
\[
\p_\mathcal{C}(x) := \arg\min_{y \in \mathcal{C}} \| x - y \|.
\]
\end{definition}
This means that \( \p_\mathcal{C}(x) \) is the unique point in \( \mathcal{C}\) that minimizes the Euclidean distance to \( x \).  It is important to note that \( \p_\mathcal{C} \) is a \textit{non-expansive} mapping that satisfies:
   \[
   \| \p_\mathcal{C}(x) - \p_\mathcal{C}(y) \| \leq \| x - y \|, \quad \forall x,\; y \in \mathbb{R}^n.
   \]

\begin{definition}[Proximal Operator \cite{first-order-beck}]
Given a function \( f : \mathbb{R}^n \to \mathbb{R} \cup \{+\infty\} \), the proximal mapping of \( f \) is the operator given by
\[
{\text{Prox}}_f(x) := \arg\min_{u \in \mathbb{R}^n} \left\{ f(u) + \frac{1}{2} \|u - x\|^2 \right\},
\text{ for any x} \in \mathbb{R}^n .\]
\end{definition}
\begin{definition}[Subgradient and Subdifferential \cite{first-order-beck}]\label{de:subgradient}
    Let a function \( f : \mathbb{R}^n \to (-\infty, \infty] \) be a proper function and let \( x \in \operatorname{dom}(f) \). A vector \( g \in \mathbb{R}^n \) is called a {subgradient} of \( f \) at \( x \) if the following inequality holds:
\[
f(y) \geq f(x) + \langle g, y - x \rangle, \quad \text{for all } y \in \mathbb{R}^n.
\]
    The set of all subgradients of a function \( f \) at a point \( x \) is called the {subdifferential}  of \( f \) at \( x \), and is denoted by \( \partial f(x) \), i.e.,
\[
\partial f(x) = \left\{ g \in \mathbb{R}^n \;\middle|\; f(y) \geq f(x) + \langle g, y - x \rangle, \quad \text{for all } y \in \mathbb{R}^n \right\}.
\]
\end{definition}
Notably, the subdifferential of the indicator function $\mathcal{I}_\mathcal{C}$ is precisely the normal cone of \( \mathcal{C} \) at \( x \) \cite[Cor. 12.18]{variational-Rock}, where the \textit{normal cone} to \( \mathcal{C} \) at \( x \)  {is defined} as:
\[
 \mathcal{N}_\mathcal{C}(x) = \{ v \in \mathbb{R}^n \mid \langle v, z - x \rangle \leq 0, \quad \forall z \in \mathcal{C} \}=\partial \mathcal{I}_\mathcal{C}(x).
\]
If the set $\mathcal{C}=\mathbb{R}^n_+$, then for $v\in  \mathcal{N}_\mathcal{C}(x)$, we have $v^{\T}x =0$ for any $x \in \mathcal{C}$, see \cite[Example 6.10]{variational-Rock}.

\begin{lemma}[\cite{first-order-beck} Th. 6.39]\label{le:subgrad-prox}
    Given a function \( f : \mathbb{R}^n \to \mathbb{R} \cup \{+\infty\} \), let $x,\;u\in \mathbb{R}^n$. Then $u=\text{Prox}_f(x)$ if and only if $x-u\in\partial f(u).$
\end{lemma}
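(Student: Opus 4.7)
The key observation is that, by the definition of the proximal operator, $u = \text{Prox}_f(x)$ if and only if $u$ is a minimizer of the function
\[
\phi(v) := f(v) + \tfrac{1}{2}\|v - x\|^2.
\]
Since $\phi$ is proper, closed, and convex (with the quadratic term supplying strict convexity so that the minimizer, when it exists, is unique), the Fermat rule gives the equivalence $u = \text{Prox}_f(x) \iff 0 \in \partial \phi(u)$. Because $v \mapsto \tfrac{1}{2}\|v-x\|^2$ is differentiable everywhere on $\mathbb{R}^n$ with gradient $v - x$, the subdifferential sum rule applies without any constraint qualification issue, yielding
\[
\partial \phi(u) = \partial f(u) + (u - x).
\]
Substituting into $0 \in \partial \phi(u)$ gives $x - u \in \partial f(u)$, which is precisely the stated equivalence.

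If one prefers not to invoke the sum rule explicitly, both implications can be obtained from first principles using only Definition~\ref{de:subgradient}. For the forward direction, I would start from the minimality inequality $f(u) + \tfrac{1}{2}\|u-x\|^2 \leq f(y) + \tfrac{1}{2}\|y-x\|^2$, expand $\tfrac{1}{2}\|y-x\|^2 - \tfrac{1}{2}\|u-x\|^2 = \langle y-u,\, u-x\rangle + \tfrac{1}{2}\|y-u\|^2$, replace $y$ by $y_t := u + t(y-u)$ for $t \in (0,1]$, use the convexity bound $f(y_t) \leq (1-t)f(u) + t f(y)$, divide by $t$, and let $t \to 0^+$. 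The quadratic remainder vanishes in the limit and the subgradient inequality $f(y) \geq f(u) + \langle x - u, y - u\rangle$ emerges. For the reverse direction, assuming $x - u \in \partial f(u)$, I would add $\tfrac{1}{2}\|y-x\|^2$ to both sides of the subgradient inequality and use the same algebraic identity to collapse the cross-terms, leaving the nonnegative remainder $\tfrac{1}{2}\|y-u\|^2 \geq 0$, which gives $\phi(y) \geq \phi(u)$ for all $y$, i.e.\ $u = \text{Prox}_f(x)$.

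I do not expect any real obstacle: the statement is essentially Fermat's rule combined with the gradient of a Euclidean norm squared. The only place that requires a moment of care is the forward direction if one avoids the sum rule, where a naive rearrangement produces an extra $-\tfrac{1}{2}\|y-u\|^2$ term; this is dispatched by the standard linearisation trick along the segment from $u$ to $y$ and the passage to the limit $t \to 0^+$.
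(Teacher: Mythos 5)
Your proof is correct and follows essentially the same route as the source the paper cites for this lemma (Beck, Th.~6.39), namely Fermat's rule applied to $v \mapsto f(v) + \tfrac{1}{2}\|v-x\|^2$ together with the sum rule (the paper itself states the lemma without proof, by citation). The only point worth noting is that your argument, like the cited theorem, requires $f$ to be proper, closed, and convex — a hypothesis you correctly invoke but which the paper's statement of the lemma omits.
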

\begin{definition}[Conjugate Function \cite{first-order-beck}]\label{de:conjugate-function}
    Let a function \( f: \mathbb{R}^n \to \mathbb{R}\cup \{+\infty\} \) be a proper function.  
The {conjugate function} \( f^*: \mathbb{R}^n \to \mathbb{R} \cup \{+\infty\} \),  {is defined} as:
\[
f^*(y) = \sup_{x \in \mathbb{R}^n} \big( \langle y, x \rangle - f(x) \big), \quad y\in\mathbb{R}^n.
\]
\end{definition}
If $f$ is the indicator function \(\mathcal{I}_{\mathcal{C}}(x)\), then $f^*(y)=\sup_{x \in \mathcal{C} }\langle y, x \rangle$.

\begin{definition}[Quasi-Fejér Monotone \cite{quasi-Fejer}]\label{def:quasi-fejer}
The sequence \( \{z^k\} \subset \mathbb{R}^{n} \) is quasi-Fejér monotone with respect to a non-empty target set \( C \subset \mathbb{R}^{n} \), if for any \( z \in C \), there exists a nonnegative and summable sequence \( \varepsilon^k \), such that for any \( k \geq 0 \), we have
\[
\|z^{k+1} - z\|^2 \leq \|z^k - z\|^2 + \varepsilon^k.
\]
The definition remains valid when \( \|\cdot\| \) is replaced by a metric-induced norm \( \|\cdot\|_{\mathcal{M}} \), see \cite[Definition 3.1]{quasi-Fejer}.
\end{definition} 

\begin{lemma}[\cite{quasi-Fejer} Th. 3.3]\label{le:strong-con}
    Let \( \{z^k\} \) be a quasi-Fejér monotone sequence with respect to a non-empty target set \( C \subset \mathbb{R}^{n} \). Then \( \{z^k\} \) converges to a point in \( C \) if and only if for any limit point \( z \) of \( \{z^k\} \), we have \( z \in C \).
\end{lemma}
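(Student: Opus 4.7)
The plan is to split the biconditional into its two directions. The forward implication ($\Rightarrow$) is immediate: if $\{z^k\}$ converges to some $\bar z \in C$, then $\bar z$ is its unique limit point, so the hypothesis on limit points is vacuously satisfied.

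For the backward direction ($\Leftarrow$), the first and key step I would take is to establish that, for \emph{every} $z \in C$, the scalar sequence $a_k := \|z^k - z\|^2$ is not merely bounded but actually convergent. This is the standard Fejér-type summability trick: starting from the quasi-Fejér inequality $a_{k+1} \leq a_k + \varepsilon_k$ together with $\sum_k \varepsilon_k < \infty$, I would introduce the shifted sequence $b_k := a_k + \sum_{j \geq k} \varepsilon_j$ and verify
\begin{align*}
b_{k+1} = a_{k+1} + \sum_{j \geq k+1} \varepsilon_j \leq a_k + \varepsilon_k + \sum_{j \geq k+1} \varepsilon_j = b_k.
\end{align*}
Since $b_k$ is nonincreasing and bounded below by $0$, it converges, and since the tail $\sum_{j \geq k}\varepsilon_j \to 0$, so does $a_k$.

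The remaining two steps are then routine. Convergence (hence boundedness) of $\|z^k - z\|$ for any single $z \in C$ implies $\{z^k\}$ is bounded in $\mathbb{R}^n$, so by Bolzano--Weierstrass it admits at least one limit point $\bar z$, which by the standing hypothesis lies in $C$. Reapplying the first step with this particular $\bar z \in C$, the full sequence $\|z^k - \bar z\|$ converges to some nonnegative limit $\ell$; but along the convergent subsequence $z^{k_j} \to \bar z$ the norm tends to $0$, so $\ell = 0$, and therefore $z^k \to \bar z \in C$.

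The main technical point I anticipate is the first step, where summability of $\{\varepsilon_k\}$ is essential to upgrade boundedness of $\|z^k - z\|$ into convergence; this is where quasi-Fejér monotonicity genuinely strengthens ordinary Fejér monotonicity (in which $\varepsilon_k \equiv 0$ and monotonicity of $a_k$ is automatic). The extension to the metric-induced norm $\|\cdot\|_{\mathcal{M}}$ mentioned after Definition~\ref{def:quasi-fejer} goes through verbatim provided $\mathcal{M}$ is positive definite, which the preliminaries have already restricted to in this work; under mere semi-positive definiteness the final step would fail, since $\|z^k - \bar z\|_{\mathcal{M}} \to 0$ would not force $z^{k_j} \to \bar z$ in the Euclidean sense.
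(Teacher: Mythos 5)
The paper does not prove this lemma at all: it is imported verbatim as Theorem~3.3 of the cited reference on quasi-Fej\'er monotonicity, so there is no in-paper argument to compare against. Your proof is the standard one and is correct --- the shifted sequence $b_k = a_k + \sum_{j\geq k}\varepsilon_j$ is exactly the right device to upgrade the quasi-Fej\'er inequality to convergence of $\|z^k - z\|$ for each $z\in C$, and the Bolzano--Weierstrass plus ``reapply with $z=\bar z$'' finish is routine. One cosmetic quibble: in the forward direction the limit-point condition is not \emph{vacuously} satisfied (a convergent sequence has exactly one limit point, namely its limit); it is satisfied because that unique limit point lies in $C$. Your closing remark about positive definiteness of $\mathcal{M}$ being needed for the $\|\cdot\|_{\mathcal{M}}$ version is apt and consistent with the standing assumption $\sigma\tau\|K\|^2<1$ made in the paper.
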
 

\subsection{Problem Statement}\label{se:problem-satement}

 {In this section, we present the specific form of the linear programming (LP) 
	problem considered in this work, see \eqref{LP-O}, and derive the corresponding dual formulation, see \eqref{DUAL-2}.  
	Beyond setting up the notation, this derivation provides the theoretical foundation for the KKT-based termination criteria introduced in  {Remark~\ref{re:constrcution-termi-con}.}
    In particular, as in \cite{google2022p}, the primal form of the LP problem here considered is: } 
\begin{equation}\label{LP-O}
\begin{aligned}
\min\quad & c^{\top}x \\
\text{s.t.}\quad & Gx \geq h, \\
& Ax = b, \\
& l \leq x \leq u.
\end{aligned}
\end{equation}
where $
G \in \mathbb{R}^{m_1 \times n}, \; A \in \mathbb{R}^{m_2 \times n}, \; c \in \mathbb{R}^{n}, \; h \in \mathbb{R}^{m_1}, \; b \in \mathbb{R}^{m_2},\;
l \in (\mathbb{R} \cup \{-\infty\})^n, \; u \in (\mathbb{R} \cup \{\infty\})^n.
$ In order to get its dual form, we first write down the Lagrangian of problem \eqref{LP-O} as:
\begin{equation}\label{LAG}
\begin{array}{l}
\LL(x,y_{1:m_1},y_{m_1+1:m},\lambda^-,\lambda^+) := \\
\quad c^{\top}x + y_{1:m_1}^{\top}(-Gx+h) + y_{m_1+1:m}^{\top}(b-Ax)
+ (\lambda^-)^{\top}(x-u) + (\lambda^+)^{\top}(l-x)
\end{array}
\end{equation}
where $m=m_1+m_2$ and where the dual variables are given by $ y_{1:m_1}\in\mathbb{R}_+^{m_1},\; y_{m_1+1:m}\in {\mathbb{R}^{m_2}},\; \;\lambda^-\in \mathbb{R}_+^n,\; \lambda^+\in \mathbb{R}_+^n$. The corresponding dual function is then
\[
\begin{aligned}
&\mathcal{K}(y_{1:m_1}, y_{m_1+1:m}, \lambda^-, \lambda^+) 
:= \inf_{x} \LL(x,y_{1:m_1},y_{m_1+1:m},\lambda^-,\lambda^+) \\
&=  { \inf_{x} \{c^{\top}x + y_{1:m_1}^{\top}(-Gx+h) + y_{m_1+1:m}^{\top}(b-Ax)
+ (\lambda^-)^{\top}(x-u) + (\lambda^+)^{\top}(l-x) \} }\\
&= \inf_{x} \{ (c - G^{\top}y_{1:m_1} - A^{\top}y_{m_1+1:m} + \lambda^- - \lambda^+)^{\top}x +  h^{\top}y_{1:m_1} + b^{\top}y_{m_1+1:m} + l^{\top}\lambda^+ - u^{\top}\lambda^- \}.
\end{aligned}
\]
Clearly, when  $c-G^{\top}y_{1:m_1}-A^{\top}y_{m_1+1:m}+\lambda^--\lambda^+=0$, then ${\mathcal{K}(y_{1:m_1}, y_{m_1+1:m}, \lambda^-, \lambda^+)}$ attains a finite value. Substituting this condition into \eqref{LAG}, we get
\begin{equation}\label{DUAL-1}
\begin{aligned}
\mathcal{K}(y_{1:m_1}, y_{m_1+1:m}, \lambda^-, \lambda^+) = h^{\top}y_{1:m_1} + b^{\top}y_{m_1+1:m} + l^{\top}\lambda^+ - u^{\top}\lambda^-,
\end{aligned}
\end{equation}
when $c-G^{\top}y_{1:m_1}-A^{\top}y_{m_1+1:m}+\lambda^--\lambda^+=0$. Now, letting 
\begin{equation}
\begin{aligned}\label{lambda-case}
K^{\top} &= \begin{bmatrix} G^{\top}, A^{\top} \end{bmatrix}, \quad q^{\top} := \begin{bmatrix} h^{\top}, b^{\top} \end{bmatrix},\; y^{\top} = \begin{bmatrix} y_{1:m_1}^{\top}, y_{m_1+1:m}^{\top} \end{bmatrix}, \; \lambda = \lambda^+-\lambda^-,\\
\Lambda &= \Lambda_1 \times \cdots \times \Lambda_n, \quad
\Lambda_i =
\begin{cases}
\{0\}, & l_i = -\infty, \quad u_i = \infty, \\
\mathbb{R}_-, & l_i = -\infty, \quad u_i \in \mathbb{R}, \\
\mathbb{R}_+, & l_i \in \mathbb{R}, \quad u_i = \infty, \\
\mathbb{R}, & l_i \in \mathbb{R}, \quad u_i \in \mathbb{R},
\end{cases}
\end{aligned}
\end{equation}
and using such definitions in \eqref{DUAL-1}, the dual problem of \eqref{LP-O} can be formulated
\begin{equation}
\begin{aligned}\label{DUAL-2}
    \max_{y,\lambda} \;&q^{\top}y+l^{\top}\lambda^+-u^{\top}\lambda^-\\
    \text{s.t.\;}& c-K^{\top}y = \lambda,\\
    & y_{1:m_1} \geq 0,\\
    & \lambda \in \Lambda.
\end{aligned}
\end{equation}
For more details about the derivation of the constraint set $\Lambda$, see Supplementary Material Sec. \ref{appen:Lambda}.


\section{Anderson Accelerated PDHG}\label{se:AA-PDHG} 
As discussed in Section \ref{se:intro}, the goal of this work is to accelerate the convergence of PDHG using AA. On the other hand, in general, AA is designed to accelerate fixed-point iterations;  for this reason, in this section, we first show that the PDHG algorithm can be written in a fixed-point fashion. Afterwards, in Algorithm \ref{al-PD-safe}, we present the pseudo-code of the computational framework considered in this work. 

We begin by presenting the specific form of the PDHG method when applied to problem \eqref{LP-O}. To this aim, as in \cite{google2022p}, we use min-max formulation of such problem:
\[
\min_{x\in X}\max_{y \in Y} L(x, y) := c^\top x - y^\top (Kx - q)
\]
{where  $X:=\{x\mid l\leq x \leq u\}$ and $Y:=\mathbb{R}_+^{m_1} \times \mathbb{R}^{m_2}$}. Through the use of indicator functions, it can be equivalently reformulated as:
\begin{equation}
\begin{aligned}
\min_{x\in X}\max_{y \in Y} L(x, y)&=\min_{x\in\mathbb{R}^n}\max_{y\in\mathbb{R}^m}c^{\T}x+y^{\T}(q-Kx)+\mathcal{I}_{\{l\leq x\}}(x)+\mathcal{I}_{\{x\leq u\}}(x)-\mathcal{I}_{Y}(y)
 \\
&=\min_{x\in\mathbb{R}^n}\max_{y\in\mathbb{R}^m} c^\top x - y^\top (Kx - q) + \mathcal{I}_{{X}}(x)-\mathcal{I}_{{Y}}(y),\label{lag-con}
\end{aligned}
\end{equation}
where the first equality holds because we performed a decomposition on the constraint $X$, i.e., $X=\{x\mid l\leq x\}\cap \{x\mid x\leq u\}$. From \eqref{lag-con}, we can write the PDHG iteration as:
\begin{subequations} \label{eq:pdhg}
\begin{align}
x^{k+1} &= \p_X\left(x^k - \tau (c - K^\top y^k)\right), \label{pdlp-x-update} \\
y^{k+1} &= \p_Y\left(y^k + \sigma (q - K(2x^{k+1} - x^k))\right). \label{pdlp-y-update}
\end{align}
\end{subequations}

We will show, in the following, that the iteration \eqref{eq:pdhg} can be rewritten in fixed-point form. This reformulation is inspired by the approach presented in \cite[Eq. (3.3)]{degenerate2021} where the authors show, indeed, that the PDHG method can be reformulated in a fixed-point fashion. However, in their analysis, the reformulation is stated for problems having the composite form 
\begin{equation}
\begin{aligned}\label{problem:f+g}
\min_{x\in \mathbb{R}^n} f(x)+g(Kx),
\end{aligned}
\end{equation}
where $K$ is a generic bounded linear operator.
Following this idea, we first rewrite \eqref{lag-con} as a problem having the form \eqref{problem:f+g}. Indeed, \eqref{lag-con} can be equivalently written as:
\begin{equation}
\begin{aligned}
&\min_{x\in\mathbb{R}^n}\max_{y\in\mathbb{R}^m} c^\top x - y^\top (Kx - q) + \mathcal{I}_{{X}}(x)-\mathcal{I}_{{Y}}(y)\\
 =&\min_{x\in\mathbb{R}^n}c^\top x+\mathcal{I}_{{X}}(x)+\max_{y\in\mathbb{R}^m}\Big\{\langle -Kx+q,y\rangle -\mathcal{I}_{Y}(y)\Big\}\\
 =&\min_{x\in\mathbb{R}^n}c^\top x+\mathcal{I}_{{X}}(x)+\mathcal{I}^*_{Y}(-Kx+q)\\
=&\min_{x\in\mathbb{R}^n}c^\top x+\mathcal{I}_{{X}}(x)+\mathcal{I}_{Q}(Kx)\label{lag-mini},
\end{aligned}
\end{equation}
where $\mathcal{I}^*_{Y}(y)$ is the conjugate function of $\mathcal{I}_{Y}(y)$, see Section \ref{se:Prelimi}, and where $Q$ is the set 
$$Q:=\{Kx\mid (Kx)_i=q_i \text{ for all } i = m_1+1,\dots,m_1+m_2,\;(Kx)_i \geq  q_i \text{ for } i = 1, \dots, m_1 \}.$$
The full details about the last equality are provided in Supplementary Material Sec. \ref{appen:IQKX}. Hence, setting, {$f(x)=c^{\T}x+\mathcal{I}_{X}(x)$ and $g(Kx)=\mathcal{I}_Q(Kx)$}, we finally proved the equivalence of the formulation \eqref{problem:f+g}
and the formulation \eqref{lag-mini}.
 
Following \cite{degenerate2021}, we can finally write PDHG in the fixed-point form:
\begin{equation}
\begin{aligned} \label{new-fixed-point}
    u^{k+1}=(\mathcal{M}+\mathcal{A})^{-1}\mathcal{M} u^k=(\mathcal{I}+\M^{-1}\mathcal{A})^{-1}u^k=\ttt (u^{k}),
\end{aligned}
\end{equation}
where \[
u = \begin{bmatrix} x \\ y \end{bmatrix},
\quad \mathcal{A} := 
\begin{bmatrix}
\tau \bar{A} &\; -\tau K^{\top} \\
\sigma K &\; \sigma \bar{B}^{-1}
\end{bmatrix}, 
\quad
\mathcal{M} := 
\begin{bmatrix}
 I & \tau K^{\top} \\
 \sigma K &I
\end{bmatrix},\quad \ttt = (\mathcal{I}+\M^{-1}\mathcal{A})^{-1}
,\] $\bar{A}=c+ \partial \mathcal{I}_{X}$ and $\bar{B}=\partial \mathcal{I}_{Q}$. Parameters $\sigma,\;\tau$ are the same as in \eqref{pdlp-x-update} and \eqref{pdlp-y-update}.

\begin{assumption}
In the remainder of this work, we assume that the step sizes $\tau$ and $\sigma$ are such that $\sigma\tau\|K\|^2<1$. This ensures the positive definiteness of $\M$ and that $\ttt$ is an $\M$-firmly non-expansive operator. Moreover, in this case, the induced semi-norm $\|\cdot \|_{\M}$ introduced in Section~\ref{se:Prelimi} becomes a norm. 
\end{assumption}

\begin{remark}\label{remark-lambda}
  The dual variable $\lambda$ introduced in \eqref{lambda-case} can be characterised,  { at optimality, }as an element of the subgradient set of the indicator function $\mathcal{I}_{X}$ in \eqref{lag-mini} . A simple explanation is shown in Supplementary Material Sec. \ref{appen:Lambda}. Please note that we will freely use this result in subsequent proofs. 
\end{remark}

Before introducing our computational framework, we note that, in general, AA relies on affine combinations of multiple historical iterates, see Algorithm \ref{al:AA-TYPE-II}. When applied to the fixed-point iterations  {defined} in \eqref{new-fixed-point}, such combinations do not necessarily preserve feasibility w.r.t. $X$ and $Y$,  whereas PDHG 
inherently produces feasible iterates -- see also equations \eqref{pdlp-x-update} and \eqref{pdlp-y-update}. In the context of PDHG, this issue is particularly undesirable. To address this problem and preserve feasibility in the AA-accelerated scheme, considering $$\p:\mathbb{R}^n\times \mathbb{R}^m \to X\times Y $$ the projection operator onto the feasible set. Specifically, this projection can be decomposed into:
\[
\mathcal{P} =
\begin{bmatrix}
\p_X & 0 \\
0 & \p_Y
\end{bmatrix},
\]
{where $\p_X:\mathbb{R}^n\to X$ and $\p_Y:\mathbb{R}^m\to Y$.}

We are now ready to present, see Algorithm \ref{al-PD-safe}, our proposed Anderson Accelerated Primal Dual Hybrid Gradient, denoted by \textit{AA-PDHG} in the following.  The specific structure of Algorithm~\ref{al-PD-safe} incorporates several key mechanisms to ensure soundness and convergence, see Section \ref{se:converge}.

The core of such a mechanism is the safeguard step included at Line~\ref{algline:safeguard}. This step adopts the strategy introduced in \cite{ANDERSON-DRS-Fu_2020,anderson-accelerated-operatorsplitting,anderson-type-I} to guarantee global convergence. In particular, when the new fixed-point residual norm $\|g^{k}\|$ does not satisfy the safeguard condition, the algorithm defaults to a classical PDHG iteration to compute the next approximation, as detailed in Line~\ref{al-update-PDHG-uk+1}. To provide insight into the interplay between acceleration and fallback strategies, the algorithm tracks the number of times Anderson Acceleration (AA) and vanilla PDHG are used. Specifically, if up to iteration \( k \), the AA step (see Line~\ref{al-update-AA-uk+1}) has been invoked \( i^{k} \) times, and the PDHG step (see Line~\ref{al-update-PDHG-uk+1}) has been applied \( j^{k} \) times, then at iteration \( k+1 \), we increment either \( i^{k+1} = i^{k} + 1 \) or \( j^{k+1} = j^{k} + 1 \).

Concerning the embedded AA procedure, we observe that the AA memory, see Line \ref{algline:AA_mem} is updated also when the safeguard step is not used in order to ensure that any acceleration step is performed using the most updated information.

 {It is important to note that} Line~\ref{algline:AA_update} does not represent a classical AA update, as the algorithm uses a regularisation technique originally proposed in \cite{AA-NONLINEAR-21} to update the accelerated iterate \( u^{k+1}_{\mathrm{AA}} \). This step is particularly important when the matrix \( (\Delta \mathcal{G}^{k-m_k})^T \Delta \mathcal{G}^{k-m_k} \) is non-invertible or poorly conditioned.  {A diagonal matrix \( \widehat{D} \) is also incorporated in the update. This matrix plays a role analogous to the preconditioning matrix discussed in \cite{diagonalPDHG-11}, helping to moderate the effect of scaling across different components of the update. The specific construction of $\widehat{D}$ used in our experiments is described in Section~\ref{se:Dhat_construction}.}

\begin{algorithm}[!t]
\caption{Anderson Accelerated Primal-Dual Hybrid Gradient (AA-PDHG)}\label{al-PD-safe}
\begin{algorithmic}[1]

\State \text{Given} initial feasible point \(u^0:=(x^0,y^0)\), a projection operator
\(\mathcal{P}:\mathbb{R}^n\times\mathbb{R}^m\to X\times Y\), and the fixed-point map
\(\mathcal{T}\)  {defined} in \eqref{new-fixed-point}. Select \(D>0\) and a diagonal matrix \(\widehat{D}\).
Let  {\(m_A\ge 1\)}, \(\varepsilon>0\), \(tol>0\). Set \(i^1=0\), \(j^1=0\).
Set \(u^{1}=\mathcal{T}u^{0}\), \(g^{0}:=u^{1}-u^{0}\).

\For{\(k=1,2,\ldots\)}
  \State \(\widehat{u}^{k+1}= \mathcal{T}u^{k}\)
  \State \(g^{k}= \widehat{u}^{k+1}-u^{k}\) \label{algline:fixedpointres}

  \If{\(\|g^{k}\|<tol\)}
    \State \text{break}
  \EndIf

  \State \(m_k = \min\{ {m_A},k\}\)
\State For $j=1,\ldots,m_k$ \label{algline:AA_mem} compute:
\[
\begin{array}{ll}
\Delta u^{k-j} := u^{k-j+1}-u^{k-j}, &\quad \Delta g^{k-j} := g^{k-j+1}-g^{k-j} \\
\Delta\mathcal{U}^{k-m_k} := [\Delta u^{k-m_k},\ldots,\Delta u^{k-1}], &\quad \Delta\mathcal{G}^{k-m_k} := [\Delta g^{k-m_k},\ldots,\Delta g^{k-1}]
\end{array}
\]
  \If{\(\|g^{k}\|\le D\|g^{0}\|(i^{k}+1)^{-(1+\varepsilon)}\)}\label{algline:safeguard} 
    \State Compute \label{algline:AA_update}: 
    \begin{equation*}
    \begin{aligned}
    &u^{k+1}_{AA}:=u^{k}-H^{k}g^{k},\\ 
    &\text{where}\;
      H^{k}=-\beta \widehat{D}
      +\big(\Delta\mathcal{U}^{k-m_k}+\beta \widehat{D}\,\Delta\mathcal{G}^{k-m_k}\big)
      \big((\Delta\mathcal{G}^{k-m_k})^{\top}\Delta\mathcal{G}^{k-m_k}+\eta I\big)^{-1}
      (\Delta\mathcal{G}^{k-m_k})^{\top}.
    \end{aligned}
\end{equation*}   
    \State Set \label{al-update-AA-uk+1} $\begin{aligned}[t]
u^{k+1} &= \mathcal{P}u_{AA}^{k+1}, \\
i^{k+1} &= i^k + 1.
\end{aligned}$
\Else
\State Set \label{al-update-PDHG-uk+1} $\begin{aligned}[t]
u^{k+1} &= \widehat{u}^{k+1}, \\
j^{k+1} &= j^k + 1.
\end{aligned}$
  \EndIf
\EndFor
\end{algorithmic}
\end{algorithm}

 {Having presented  AA-PDHG (Algorithm \ref{al-PD-safe}),  two remarks are in order:}

 {\begin{remark}[Feasibility preservation]\label{rem:convex_AA}
		An alternative approach to preserving feasibility would be to constrain the AA coefficients to be nonnegative and sum to one, thereby computing a convex combination of past iterates that automatically lies in the feasible set. However, this would significantly restrict the expressiveness of the AA update, as the ability to form general affine combinations is essential for the acceleration properties of AA. Indeed, the connection between AA and GMRES for linear problems \cite{AA-FPI-NI} relies precisely on the affine (not convex) structure of the combination. We therefore adopt the projection-based approach, which preserves the full acceleration capability of AA while ensuring feasibility through an inexpensive additional step.
\end{remark}}

 {\begin{remark}[Parameter selection]\label{rem:param_selection}
Algorithm~\ref{al-PD-safe} involves several parameters whose selection we briefly discuss.
The safeguard parameter $D > 0$ controls how frequently the AA step is accepted: larger values of $D$ relax the safeguard and allow more frequent AA updates, while smaller values enforce stricter control and more frequent fallback to vanilla PDHG. The exponent $\varepsilon > 0$ governs the asymptotic decay rate of the safeguard threshold. The damping parameter $\beta \in (0,1]$ is fixed at $\beta = 1$ (no damping), which is the standard choice in most AA implementations. The Tikhonov regularisation parameter $\eta > 0$ addresses potential ill-conditioning in the AA least-squares subproblem . The AA memory size $m_A$ determines the   {maximum} number of past iterates used in the acceleration step. Values in the range $5$ to $20$ are typical in the literature \cite{AA-CON-TAK,AA-FPI-NI,AA-IMPROVE-PROOF-ECPRX}. A detailed experimental performance analysis of these parameters is provided in Section~\ref{appen:hyper} of the Supplementary Material.
\end{remark}}

\subsection{Convergence analysis}\label{se:converge}
In this section, we present the proof of global convergence of Algorithm~\ref{al-PD-safe}. To this aim, for the sake of completeness,  we prove that the fixed-point residual generated by the vanilla iteration \eqref{new-fixed-point}, converges to zero, see also \cite{degenerate2021}.

\begin{theorem}\label{th:pdhg-residual-0}
    Let the sequence \( \{u^k\} \) be generated by  $u^{k+1} = \mathcal{T} u^k$. Then, the fixed-point residual converges to zero, i.e.,
\begin{equation}
\begin{aligned}
    \lim_{k \to \infty} \| \mathcal{T} u^k -  u^k \| = 0.
\end{aligned}
\end{equation}
\end{theorem}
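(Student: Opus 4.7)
The plan is to exploit the $\mathcal{M}$-firm non-expansiveness of $\mathcal{T}$ stated in the Assumption. Firm non-expansiveness with respect to the inner product $\langle\cdot,\cdot\rangle_{\mathcal{M}}$ means that for any fixed point $u^{\star}$ of $\mathcal{T}$ (which exists provided the primal–dual problem has a saddle point, a standing assumption for LPs of interest), one has the descent-type inequality
\begin{equation*}
\|u^{k+1}-u^{\star}\|_{\mathcal{M}}^{2} \;\leq\; \|u^{k}-u^{\star}\|_{\mathcal{M}}^{2}\;-\;\|u^{k+1}-u^{k}\|_{\mathcal{M}}^{2}.
\end{equation*}
I would first record this inequality (citing \cite{degenerate2021} or deriving it quickly from the characterisation of firm non-expansiveness) and note that it implies $\{u^k\}$ is Fejér monotone with respect to the set of fixed points of $\mathcal{T}$ in the $\mathcal{M}$-norm.

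Next, I would telescope the inequality from $k=0$ to $N$ to obtain
\begin{equation*}
\sum_{k=0}^{N}\|u^{k+1}-u^{k}\|_{\mathcal{M}}^{2} \;\leq\; \|u^{0}-u^{\star}\|_{\mathcal{M}}^{2}.
\end{equation*}
Letting $N\to\infty$ yields summability of the squared $\mathcal{M}$-norms of successive differences, hence $\|u^{k+1}-u^{k}\|_{\mathcal{M}}\to 0$. Since $u^{k+1}=\mathcal{T}u^{k}$, this is exactly $\|\mathcal{T}u^{k}-u^{k}\|_{\mathcal{M}}\to 0$.

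Finally, because $\sigma\tau\|K\|^{2}<1$, $\mathcal{M}$ is positive definite and hence $\|\cdot\|_{\mathcal{M}}$ is equivalent to the Euclidean norm $\|\cdot\|$: there exists $c>0$ with $c\|u\|\leq \|u\|_{\mathcal{M}}$ for every $u$. Applying this to $u=\mathcal{T}u^{k}-u^{k}$ transfers the convergence to the Euclidean norm, giving $\lim_{k\to\infty}\|\mathcal{T}u^{k}-u^{k}\|=0$, which is the claim.

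The only delicate point I foresee is justifying the existence of a fixed point $u^{\star}$ used in the firm non-expansiveness inequality; this is typically handled by invoking the equivalence (established through the derivation leading to \eqref{new-fixed point}) between fixed points of $\mathcal{T}$ and saddle points of the Lagrangian \eqref{lag-con}, so it suffices to assume the LP \eqref{LP-O} admits a primal–dual optimal pair. Everything else is a standard Fejér-monotone/telescoping argument and an equivalence-of-norms step.
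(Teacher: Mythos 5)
Your proposal is correct and follows essentially the same route as the paper: the paper derives your descent inequality $\|u^{k+1}-u^{\star}\|_{\mathcal{M}}^{2}\leq\|u^{k}-u^{\star}\|_{\mathcal{M}}^{2}-\|u^{k+1}-u^{k}\|_{\mathcal{M}}^{2}$ by writing $\mathcal{T}=\tfrac{1}{2}(\mathcal{R}+\mathcal{I})$ with $\mathcal{R}=2\mathcal{T}-\mathcal{I}$ non-expansive, which is just the averaged-operator characterisation of the $\mathcal{M}$-firm non-expansiveness you invoke directly, and then telescopes and uses positive definiteness of $\mathcal{M}$ exactly as you do. The existence of the fixed point $u^{\star}$ is likewise simply assumed in the paper's proof, so your flagged caveat matches its treatment.
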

\begin{proof}{Proof}
The proof is provided in Supplementary Material Sec. \ref{appen:fix-res}.
\end{proof}

To ensure the convergence of the iteration \eqref{new-fixed-point} when coupled with AA, we follow the strategy of incorporating a safeguard check that controls the behaviour and activation of AA. With this design, we now claim that the safeguard condition will be satisfied infinitely many times, i.e.,  {Line~\ref{algline:AA_update}-\ref{al-update-AA-uk+1}} will be used infinitely many times in Algorithm \ref{al-PD-safe}.

\begin{lemma}\label{le-AA-use-inifite}
In Algorithm \ref{al-PD-safe}, the safeguard check is passed infinitely many times, i.e., the Anderson Acceleration step at Line \ref{al-update-AA-uk+1} is used infinitely often.
\end{lemma}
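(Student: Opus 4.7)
The plan is to prove the lemma by contradiction, exploiting the fact that whenever the safeguard fails the algorithm falls back on a vanilla PDHG step, so if the AA branch were entered only finitely often the tail of the sequence would be a pure fixed-point iteration, to which Theorem~\ref{th:pdhg-residual-0} applies.

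First I would assume, for contradiction, that Line~\ref{al-update-AA-uk+1} is executed only finitely many times. Then there exists an index $K$ such that for every $k \geq K$ the safeguard condition at Line~\ref{algline:safeguard} fails and the else branch is taken, i.e.\ $u^{k+1} = \widehat{u}^{k+1} = \ttt u^k$. In particular, the counter $i^k$ stagnates: $i^k = i^K =: I$ for all $k \geq K$, so the right-hand side of the safeguard inequality becomes the fixed positive constant
\begin{equation*}
C := D \, \|g^0\| \, (I+1)^{-(1+\varepsilon)} > 0.
\end{equation*}

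Next I would apply Theorem~\ref{th:pdhg-residual-0} to the tail sequence $\{u^k\}_{k \geq K}$. Since $u^{k+1} = \ttt u^k$ holds from iteration $K$ onwards, this tail is exactly a vanilla PDHG orbit initialized at $u^K$, and hence $\|\ttt u^k - u^k\| \to 0$. By the definition of $g^k$ at Line~\ref{algline-fixedpointres}, this means $\|g^k\| \to 0$ as $k \to \infty$. Consequently, there exists $k^* \geq K$ such that $\|g^{k^*}\| < C$, which means the safeguard condition $\|g^{k^*}\| \leq D \|g^0\| (i^{k^*}+1)^{-(1+\varepsilon)}$ is satisfied and Line~\ref{al-update-AA-uk+1} is entered at iteration $k^*$. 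This contradicts the choice of $K$, proving the lemma.

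I do not anticipate a major obstacle here: the argument is essentially structural, relying only on the dichotomy of Algorithm~\ref{al-PD-safe} together with Theorem~\ref{th:pdhg-residual-0}. The one point worth stating carefully is that whenever the algorithm takes the else branch, the update $u^{k+1} = \widehat{u}^{k+1} = \ttt u^k$ is genuinely a fixed-point step (unaffected by the projection $\mathcal{P}$ or the matrix $H^k$), so Theorem~\ref{th:pdhg-residual-0} can be invoked verbatim on the tail.
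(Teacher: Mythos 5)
Your proof is correct and follows essentially the same route as the paper: assume the AA branch fires only finitely often, note that the tail is then a pure PDHG orbit so Theorem~\ref{th:pdhg-residual-0} forces $\|g^k\|\to 0$ while the safeguard threshold stays bounded below by a positive constant, and derive a contradiction. Your explicit remark that the else branch is a genuine fixed-point step is a nice touch but the argument is the same as the paper's.
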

\begin{proof}{Proof}
The proof is provided in Supplementary Material Sec. \ref{appen:AA-infi}.
\end{proof}

Let us introduce some necessary notation useful in the following.  Regardless of how the algorithm alternates between AA and PDHG steps, for the sake of clarity, we divide the index set \( \mathbb{N}=\{0,1,\ldots\} \) of iterations into two subsets: $K_{aa}:=\{a_0, a_1,a_2,\ldots\} $ and $K_{pd}=\{p_0, p_1,\ldots\}$. So, $K_{aa}$ represents the iteration indices where the safeguard condition is satisfied and Line \ref{al-update-AA-uk+1} is applied, whereas
$K_{pd}$ represents the iteration indices where Line \ref{al-update-PDHG-uk+1} is used. It is important to note by Lemma \ref{le-AA-use-inifite}  we know that $K_{aa}$ is an infinite set.

In the next lemma, we prove that the sequence $\{u^k\}$ generated by Algorithm~\ref{al-PD-safe} is a quasi-Fejér monotone sequence with respect to non-empty set $Fix\ttt $.

\begin{lemma}\label{le-quasi-fejer}
Assume that matrix $H^k$ generated by Algorithm \ref{al-PD-safe} is uniformly bounded, i.e., $\|H^k\|\leq M$ for all $k$. Consider $\{u^k\}$  generated by Algorithm~\ref{al-PD-safe} and assume the point $u^*\in Fix\ttt$. Then $\{u^k\}$ is a quasi-Fejér monotone sequence with respect to the set $Fix\ttt $ {under the norm induced by $\mathcal{M}$}.
\end{lemma}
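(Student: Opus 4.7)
The plan is to establish quasi-Fejér monotonicity in the $\M$-induced norm, which is permitted by Definition~\ref{def:quasi-fejer}. Two preliminary observations underpin the argument. First, any $u^{*} \in Fix\,\ttt$ satisfies $u^{*} = \ttt u^{*} \in X \times Y$ because the components of $\ttt$ factor through the box-projections in \eqref{pdlp-x-update}--\eqref{pdlp-y-update}; hence $\mathcal{P} u^{*} = u^{*}$. Second, every iterate $u^k$ produced by Algorithm~\ref{al-PD-safe} lies in $X \times Y$: the AA branch applies $\mathcal{P}$ explicitly, and the PDHG branch inherits feasibility from $\ttt$. Consequently $\mathcal{P} u^k = u^k$ for every $k$.

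I then split the iteration indices $\mathbb{N} = K_{pd} \cup K_{aa}$ and treat the two cases separately. For $k \in K_{pd}$, $u^{k+1} = \ttt u^k$ and the same computation used in the proof of Theorem~\ref{th:pdhg-residual-0} yields $\|u^{k+1} - u^{*}\|_{\M}^{2} \leq \|u^k - u^{*}\|_{\M}^{2}$, so the increment $\varepsilon^k$ can be taken to be zero. For $k \in K_{aa}$ I decompose
\begin{equation*}
u^{k+1} - u^{*} = \bigl[\mathcal{P}(u^k - H^k g^k) - \mathcal{P}(u^k)\bigr] + (u^k - u^{*}),
\end{equation*}
apply the triangle inequality in $\|\cdot\|_{\M}$, bound the bracket via $\|v\|_{\M} \leq \sqrt{\lambda_{\max}(\M)}\|v\|$ together with the $\ell_{2}$-non-expansiveness of $\mathcal{P}$ and the assumed bound $\|H^k\| \leq M$ to get $\|u^{k+1} - u^{*}\|_{\M} \leq \|u^k - u^{*}\|_{\M} + \sqrt{\lambda_{\max}(\M)}\,M\,\|g^k\|$. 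Squaring produces
\begin{equation*}
\|u^{k+1} - u^{*}\|_{\M}^{2} \leq \|u^k - u^{*}\|_{\M}^{2} + 2\sqrt{\lambda_{\max}(\M)}\,M\,\|g^k\|\,\|u^k - u^{*}\|_{\M} + \lambda_{\max}(\M)\,M^{2}\|g^k\|^{2}.
\end{equation*}
To convert this into quasi-Fejér form I need uniform boundedness of $\|u^k - u^{*}\|_{\M}$. Telescoping the un-squared bound across all iterations (with zero increment on $K_{pd}$) reduces the task to summability of $\sum_{k \in K_{aa}} \|g^k\|$. Since $i^k$ strictly increases by one at each AA step, the values $\{i^k : k \in K_{aa}\}$ enumerate $\{0,1,2,\ldots\}$, and the safeguard $\|g^k\| \leq D\|g^0\|(i^k+1)^{-(1+\varepsilon)}$ gives $\sum_{k \in K_{aa}} \|g^k\| \leq D\|g^0\|\sum_{l \geq 0}(l+1)^{-(1+\varepsilon)} < \infty$. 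Hence $\|u^k - u^{*}\|_{\M} \leq R$ uniformly for some finite $R$, and setting $\varepsilon^k := 2\sqrt{\lambda_{\max}(\M)}\,M\,R\,\|g^k\| + \lambda_{\max}(\M)\,M^{2}\|g^k\|^{2}$ on $K_{aa}$ and $\varepsilon^k := 0$ on $K_{pd}$ gives a nonnegative summable sequence fulfilling Definition~\ref{def:quasi-fejer}.

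The main obstacle I anticipate is the mismatch between the norms natural to each branch: the PDHG step is contractive in $\|\cdot\|_{\M}$ while $\mathcal{P}$ and the AA update are naturally analysed in $\|\cdot\|$. The decomposition above exploits the feasibility of both $u^k$ and $u^{*}$ so that only the \emph{displacement} $\mathcal{P}(u^k - H^k g^k) - \mathcal{P}(u^k)$ (rather than the full term $u^k - u^{*}$) incurs a norm-equivalence conversion; this is what prevents an amplifying factor $\lambda_{\max}(\M)/\lambda_{\min}(\M) > 1$ from multiplying $\|u^k - u^{*}\|_{\M}^{2}$ on the right-hand side and spoiling the quasi-Fejér inequality.
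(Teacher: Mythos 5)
Your proposal is correct and follows essentially the same route as the paper's proof: split the indices into $K_{pd}$ and $K_{aa}$, use firm non-expansiveness of $\ttt$ on the PDHG steps, bound the AA perturbation by $\mathrm{const}\cdot\|g^k\|$ via the projection's non-expansiveness and $\|H^k\|\leq M$, telescope the unsquared bound with the safeguard (and the fact that $i^k$ enumerates $0,1,2,\dots$ over $K_{aa}$) to get a uniform bound on $\|u^k-u^*\|_{\M}$, and then square to produce the summable $\varepsilon^k$. Your decomposition through $\mathcal{P}(u^k)-\mathcal{P}(u^k-H^kg^k)$ with the explicit $\ell_2$-to-$\|\cdot\|_{\M}$ conversion is in fact a slightly cleaner handling of the step where the paper applies the projection's non-expansiveness directly in the $\M$-norm, but the argument is otherwise identical.
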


\begin{proof}{Proof}
The proof is provided in Supplementary Material Sec. \ref{appen:AA-fejer}.
\end{proof}

  {\begin{remark}[On the boundedness assumption]\label{rem:boundedness_assumption}
The following theorem requires the uniform boundedness of the matrices $H^k$, i.e., $\|H^k\| \leq M$ for all $k$. We emphasise that this is an \emph{assumption} on the iterates of Algorithm~\ref{al-PD-safe}, not a property guaranteed by the algorithm itself. The Tikhonov regularisation parameter $\eta$ ensures the well-posedness of each individual AA update but does not, by itself, imply a uniform bound on $\|H^k\|$ across all iterations (see the discussion in Section~\ref{se:rpdhg_comparison}). In Section~\ref{se:FAA-PDHG}, we introduce the FAA-PDHG variant, which enforces this bound algorithmically through ad-hoc procedures. 
\end{remark}}

We are finally able to show that the sequence generated by Algorithm \ref{al-PD-safe} converges globally.

\begin{theorem}\label{th:con}
Assume that the sequence \( \{u^k\} \) is generated by Algorithm~\ref{al-PD-safe}, where \( H^k \) satisfies \( \|H^k\| \leq M \). Then \( \{u^k\} \) converges globally to a fixed-point $u^*\in Fix\ttt$.
\end{theorem}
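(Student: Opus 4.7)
The plan is to combine the quasi-Fejér monotonicity from Lemma~\ref{le-quasi-fejer} with a sharpened residual analysis, and then invoke Lemma~\ref{le:strong-con}. First, by Lemma~\ref{le-quasi-fejer}, $\{u^k\}$ is $\M$-quasi-Fejér monotone with respect to $Fix\ttt$. Since $\M$ is positive definite, the norms $\|\cdot\|_{\M}$ and $\|\cdot\|$ are equivalent, so $\{u^k\}$ is bounded and admits at least one limit point. By Lemma~\ref{le:strong-con}, it suffices to verify that every limit point belongs to $Fix\ttt$.

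The crux is to show $\|g^k\|\to 0$, which I would do by splitting $\mathbb{N}=K_{aa}\cup K_{pd}$. Along $K_{aa}$, the safeguard condition gives $\|g^k\|\leq D\|g^0\|(i^k+1)^{-(1+\varepsilon)}$, and because $K_{aa}$ is infinite by Lemma~\ref{le-AA-use-inifite} we have $i^k\to\infty$ as $K_{aa}\ni k\to\infty$, so $\|g^k\|\to 0$ on this subsequence. For indices in $K_{pd}$, the $\M$-firm non-expansiveness of $\ttt$ yields the sharper per-step descent from~\eqref{eq:kpd_PDHG-used-1},
\[
\|u^{k+1}-u^*\|_{\M}^2 \;\leq\; \|u^{k}-u^*\|_{\M}^2 - \|g^{k}\|_{\M}^2,
\]
while for $k\in K_{aa}$ we keep the quasi-Fejér bound $\|u^{k+1}-u^*\|_{\M}^2\leq \|u^{k}-u^*\|_{\M}^2+\varepsilon^k$ with $\sum_k \varepsilon^k<\infty$ coming from inside Lemma~\ref{le-quasi-fejer}. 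Telescoping these inequalities together from $0$ to $N$, non-negativity of $\|u^{N+1}-u^*\|_{\M}^2$ gives
\[
\sum_{k\in K_{pd}}\|g^{k}\|_{\M}^{2} \;\leq\; \|u^{0}-u^{*}\|_{\M}^{2} + \sum_{k\in K_{aa}}\varepsilon^{k} \;<\; \infty,
\]
so $\|g^{k}\|_{\M}\to 0$ along $K_{pd}$ as well (if $|K_{pd}|<\infty$ the claim is trivial). Combining both sub-sequences and using equivalence of norms gives $\|g^{k}\|\to 0$ on the whole sequence.

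To close the argument, let $\bar u$ be any limit point of $\{u^{k}\}$ and pick a subsequence $u^{k_l}\to \bar u$. Since $\ttt$ is $\M$-firmly non-expansive it is continuous, hence $\ttt u^{k_l}\to \ttt\bar u$. Combined with $\|g^{k_l}\|=\|\ttt u^{k_l}-u^{k_l}\|\to 0$ from the previous step, we conclude $\ttt\bar u=\bar u$, i.e.\ $\bar u\in Fix\ttt$. Applying Lemma~\ref{le:strong-con} to the $\M$-quasi-Fejér monotone sequence $\{u^k\}$ then delivers global convergence of $\{u^{k}\}$ to some $u^{*}\in Fix\ttt$.

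The main obstacle is the $K_{pd}$ argument: the plain quasi-Fejér bound used in Lemma~\ref{le-quasi-fejer} is too weak to force $\|g^{k}\|\to 0$ on PDHG iterations, so the proof must retain the strict descent \eqref{eq:kpd_PDHG-used-1} at PDHG steps and absorb the AA perturbations via the summable sequence $\{\varepsilon^{k}\}$ in a single telescoping inequality. The rest (continuity of $\ttt$ and application of Lemma~\ref{le:strong-con}) is standard once $\|g^k\|\to 0$ is in hand.
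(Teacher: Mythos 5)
Your proposal is correct and follows essentially the same route as the paper: split the iterations into $K_{aa}$ and $K_{pd}$, use the safeguard bound on the AA subsequence, telescope the firm-nonexpansiveness descent against the summable perturbations to get $\sum_{k\in K_{pd}}\|g^k\|_{\M}^2<\infty$, conclude $\|g^k\|\to 0$, and finish with continuity of $\ttt$ plus Lemma~\ref{le:strong-con}. The only cosmetic difference is that you reuse the abstract summable sequence $\{\varepsilon^k\}$ from Lemma~\ref{le-quasi-fejer} where the paper re-expands the explicit constants; the argument is the same.
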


\begin{proof}{Proof}
  {Let $\lambda_{\max}{(\M)}$ be the biggest eigenvalue of matrix $\M$.}
 {Let us define the constant
\[
E:=\| u^{0}- u^*\|_\M + M\sqrt{\lambda_{\max}(\M) }D\|g^0\|\sum_{j=0}^{\infty}(j+1)^{-(1+\varepsilon)},
\]
which provides a uniform upper bound on $\|u^k - u^*\|_\M$ for all $k \geq 0$, as established in the proof of Lemma~\ref{le-quasi-fejer} (see Supplementary Material Sec.~\ref{appen:AA-fejer}).}
Let us recall, that by \eqref{uk-sequence-diff-bound}, for $k=a_{l}\in K_{aa}$, we have
\begin{equation}
\begin{aligned}\label{eq:ki-aa-2}
&\|u^{k+1}-u^*\|_\M^2-\|u^{k}-u^*\|_\M^2\\
 & \leq M^2D^2{  { {\lambda_{\max}(\M)}}}\|g^0\|^2(i^{k}+1)^{-2(1+\varepsilon)}+2EM{  { \sqrt{\lambda_{\max}(\M)}}} D\|g^0\|(i^{k}+1)^{-(1+\varepsilon)}.
\end{aligned}
\end{equation}
Similarly, by \eqref{eq:kpd_PDHG-used-1}, for $k=p_{l}\in K_{pd}$, we have
\begin{equation}
\begin{aligned}\label{pdhg-residual-kj-pdhg-2}
    \|u^{k+1}-u^*\|_\M^2-\|u^{k}-u^*\|_\M^2 \leq  - \|g^{k}\|^2_\M.
\end{aligned}
\end{equation}

Let us considered the sequences \( \{g^{p_{l}}\} \) and \( \{g^{a_{l}}\} \), extracted from the full sequence \( \{g^k\} \).
Observing that AA is used infinitely many times in Algorithm~\ref{al-PD-safe}, see Lemma \ref{le-AA-use-inifite}, if we can prove that the subsequences satisfy one of the following two cases:
\begin{align}
&\lim_{l \to \infty} \|g^{p_{l}}\| = 0 = \lim_{l \to \infty} \|g^{a_{l}}\|,\label{eq:lim-gaa-gpd}\\
\text{or\quad}
&\quad\lim_{l \to \infty} \|g^{a_{l}}\|=0, \text{ and the number of elements in $K_{pd}$ is finite,} \label{eq:lim-gaa-gpd-1}
\end{align}
then, we can conclude that $\lim_{k \to \infty} \|g^k\| = 0.$
Next, we prove, indeed, \eqref{eq:lim-gaa-gpd} and \eqref{eq:lim-gaa-gpd-1} hold. 
Using \eqref{eq:ki-aa-2} and \eqref{pdhg-residual-kj-pdhg-2} over all $j \leq k$, we have,
\begin{equation}
\begin{aligned}\label{eq:sum-gpl}
  &\|u^{k+1}-u^*\|^2_\M-\|u^{0}-u^*\|^2_\M\\
  &=\sum_{j \in K_{pd}, \; j \leq k}\Big(\|u^{j+1}-u^*\|_\M^2-\|u^{j}-u^*\|_\M^2 \Big)
   + \sum_{j \in K_{aa}, \; j \leq k} \Big(\|u^{j+1}-u^*\|_\M^2-\|u^{j}-u^*\|_\M^2\Big)\\
    &\leq - \sum_{j \in K_{pd}, \; j \leq k} \|g^{j}\|_\M^2 +M^2D^2{\lambda_{\max}(\M) }\|g^0\|^2\sum_{j \in K_{aa}, \; j \leq k}  (i^{j}+1)^{-2(1+\varepsilon)}
\\
&
\quad+2EM{{  { \sqrt{\lambda_{\max}(\M)}}} }D\|g^0\|\sum_{j \in K_{aa}, \; j \leq k}(i^{j}+1)^{-(1+\varepsilon)}.
\end{aligned}
\end{equation}

From the above, we obtain
\begin{equation*}
\begin{aligned}
&\sum_{j \in K_{pd}, \; j \leq k} \|g^{j}\|_\M^2 \\
&\leq M^2D^2\lambda_{\max}(\M)\|g^0\|^2
      \sum_{j \in K_{aa}, \; j \leq k} (i^{j}+1)^{-2(1+\varepsilon)} \\
&\quad + 2EM{  { \sqrt{\lambda_{\max}(\M)}}}D\|g^0\|
      \sum_{j \in K_{aa}, \; j \leq k}(i^{j}+1)^{-(1+\varepsilon)}
      - \|u^{k+1}-u^*\|_\M^2 + \|u^{0}-u^*\|_\M^2 \\
&\leq M^2D^2\lambda_{\max}(\M)\|g^0\|^2
      \sum_{j \in K_{aa}, \; j \leq k}(i^{j}+1)^{-2(1+\varepsilon)} \\
&\quad + 2EM{  { \sqrt{\lambda_{\max}(\M)}}}D\|g^0\|
      \sum_{j \in K_{aa}, \; j \leq k}(i^{j}+1)^{-(1+\varepsilon)}
      + \|u^{0}-u^*\|_\M^2
\end{aligned}
\end{equation*}

Now letting $k \to \infty$, due to the summability of the series  
$$M^2D^2{{  { {\lambda_{\max}(\M)}}} }\|g^0\|^2\sum_{j=0}^{\infty}(i^{j}+1)^{-2(1+\varepsilon)}+2EM{{  { \sqrt{\lambda_{\max}(\M)}}} }D\|g^0\|\sum_{j=0}^{\infty}(i^{j}+1)^{-(1+\varepsilon)}+\|u^0-u^*\|^2_\M,$$ we obtain that $\sum_{j \in K_{pd}} \|g^{j}\|_\M^2 < + \infty$, i.e., either $K_{pd}$ is finite, or $\lim_{\substack{j \to \infty \\ j \in K_{pd}}}\|g^{j}\|=0$. 

Moreover, from safeguard condition, we have for any $j \in K_{aa}$,
$$0\leq \lim_{\substack{j \to \infty \\ j \in K_{aa}}}\|g^{j}\|\leq D \|g^0\|\lim_{j\to\infty}(j+1)^{-(1+\varepsilon)}=0.$$
Hence, we conclude that $\lim_{k\to\infty}\|g^k\|=0$.

By the proof of Lemma \ref{le-quasi-fejer}, we get that the sequence \( \{ \|u^k - u^*\| \}_k \) is bounded, and hence, we can conclude that  \( \{u^k\} \) is bounded. Then for any convergent subsequence of $\{u
 ^k\}$, i.e., \( u^{k_n} \to  {u^*} \), combined with \( \lim_{n \to \infty} g^{k_n} =  \lim_{n \to \infty} \ttt u^{k_n}-u^{k_n}= 0 \), we know that \(\ttt  {u^*}= \lim_{n \to \infty} \ttt u^{k_n} = \lim_{n \to \infty}  u^{k_n} = {u^*}, \) and hence \(   {u^*} \in Fix \mathcal{T} \).
We can then conclude that the limit point of any convergent subsequence of \( \{u_k\} \) is a fixed-point of \( \ttt \).

Moreover, we have already shown that \( \{u^k\} \) is a \( \mathcal{M} \)-quasi-Fejér monotone sequence with respect to \( Fix\, \mathcal{T} \) in Lemma~\ref{le-quasi-fejer}, i.e., $
\|u^{k+1} - u^*\|_{\mathcal{M}} \leq \|u^k - u^*\|_{\mathcal{M}} + \varepsilon^k,$
and hence, from Lemma~\ref{le:strong-con}, we conclude that \( \{u^k\} \) converges to some \( u^* \in Fix\ttt\).
\end{proof}

\section{Filtered Anderson Accelerated PDHG (FAA-PDHG)}\label{se:FAA-PDHG}
In this section, we will address the issue concerning the uniform boundedness of the matrices $\|H^k\|$. The results here presented are mainly inspired by the developments in \cite{PRFAA-2023}, and extend the techniques there presented to the safeguard framework in Algorithm \ref{al-PD-safe}. To this aim, we will consider the following Algorithm \ref{al-PD-safe_filtered1}, a modification of Algorithm \ref{al-PD-safe},  that enforces algorithmically the uniform boundedness of the matrices $H^k$ using the procedure described in Algorithms~\ref{al:angle-filtering1} (\textit{AngleFilter}) and~\ref{al:length-filtering1} (\textit{LengthFilter}), see Lines~\ref{algline:a_filter1} and~\ref{algline:l_filter1} Algorithm \ref{al-PD-safe_filtered1}.  {We explicitly note, more in detail, that the main differences w.r.t. Algorithm \ref{al-PD-safe}, are represented by Lines~\ref{algline:reverse1} - \ref{algline:filtered_update}.}  The auxiliary function \textit{reverse} simply swaps the columns of the input matrices, as illustrated in Algorithm~\ref{al:reverse}. Its use is motivated by the choice to leverage existing results from \cite{PRFAA-2023}, where the most recent information is stored to the left of the matrices $\Delta\mathcal{U}^{k-m_k}$ and $\Delta\mathcal{G}^{k-m_k}$, in contrast to our convention, where the most updated columns are placed at the right.   {In this regard, it is important to note, that the reverse function in Algorithm \ref{al:reverse}  is a notational convenience that aligns our indexing convention -- where the most recent columns appear on the right -- with that of \cite{PRFAA-2023}, where they appear on the left.  This alignment allows us to invoke   Lemma~\ref{eq:lemma1.51}  and \ref{le:RK-1bound1} directly without reformulation.}

 {We remark, moreover, that Algorithm~\ref{al-PD-safe_filtered1} inherits the global convergence guarantees established in Section~\ref{se:converge}. Indeed, the convergence analysis of Theorem~\ref{th:con} relies solely on the uniform boundedness assumption $\|H^k\| \leq M$ and does not depend on the particular form of the matrix $H^k$. Since the angle and length filtering procedures enforce this bound algorithmically (see Theorem~\ref{th:FAA-convergence}), the convergence of Algorithm~\ref{al-PD-safe_filtered1} follows directly. Furthermore, we note that Algorithm~\ref{al-PD-safe_filtered1} does not employ the Tikhonov regularisation parameter $\eta$ present in Algorithm~\ref{al-PD-safe}. This is because the \textit{AngleFilter} and \textit{LengthFilter} procedures guarantee the well-conditioning of the QR factorisation of $\Delta\mathcal{G}^{k-m_k}$, thereby ensuring the invertibility of $R_k$ and the well-definedness of the Anderson update without the need for explicit regularisation.}

\begin{algorithm}
\caption{Filtered Anderson Accelerated PDHG (FAA-PDHG) }\label{al-PD-safe_filtered1}
\vspace{-0.1cm}
\begin{algorithmic}[1] 
\State \textbf{Given} initial feasible point $u^0 := (x^0, y^0)$, a projection operator $\mathcal{P} : \mathbb{R}^n \times \mathbb{R}^m \to X \times Y$, and the fixed-point iteration $\mathcal{T}$ defined in \eqref{new-fixed-point}. Select a scalar $D>0$ and a diagonal matrix $\widehat{D}$. Let $m_A \geq 1$, $\varepsilon>0$, $tol>0$. Set $i^1 = 0$, $j^1 = 0$.  Set  $u^1=\mathcal{T} u^{0}$, $g^{0}:=u^{1}  - u^{0}$. 
\For{$k=1,\dots$}
    \State Set $\widehat{u}^{k+1} = \mathcal{T} u^{k} $
    \State Set $g^{k}=  \widehat{u}^{k+1} -  u^{k}$ \label{algline-fixedpointres}

    \If{$\|g^{k}\|<tol$} 
        \State \textbf{break}
    \EndIf
    
    \State Set $m_k=\min \{k, m_A\}$

\State For $j = 1, \dots, m_k$ compute \label{algline:AA_mem_filter}
\[
\begin{array}{ll}
\Delta u^{k-j} := u^{k-j+1}-u^{k-j}, &\quad \Delta g^{k-j} = g^{k-j+1}-g^{k-j}\\
\Delta \mathcal{U}^{k-m_k}=[\Delta u^{k-m_k},\ldots,\Delta u^{k-1}], &\quad \Delta \mathcal{G}^{k-m_k} =[\Delta g^{k-m_k},\ldots,\Delta g^{k-1}]
\end{array}
\]
    \If{$\|g^{k}\| \leq D \|g^0\| (i^{k}+1)^{-(1+\varepsilon)}$} \label{algline:safeguard_filter}
        \State $(\Delta\mathcal{U}^{k-m_k},\Delta\mathcal{G}^{k-m_k})=Reverse(\Delta\mathcal{U}^{k-m_k},\Delta\mathcal{G}^{k-m_k})$ \label{algline:reverse1}
        \State Angle Filter Step: \label{algline:a_filter1}\;
$\begin{aligned}
(\Delta\mathcal{U}^{k-m_k},\Delta\mathcal{G}^{k-m_k}) = {} &
\textit{AngleFilter}(\Delta\mathcal{U}^{k-m_k},\Delta\mathcal{G}^{k-m_k},m_k,c_s)
\end{aligned}$
        \State Length Filter Step: \label{algline:l_filter1}\;
        $\begin{aligned}
        (\Delta\mathcal{U}^{k-m_k},\Delta\mathcal{G}^{k-m_k})=LengthFilter(\Delta\mathcal{U}^{k-m_k},\Delta\mathcal{G}^{k-m_k},m_k,c_s,\bar{\kappa})
        \end{aligned}$
        \State $(\Delta\mathcal{U}^{k-m_k},\Delta\mathcal{G}^{k-m_k})=Reverse(\Delta\mathcal{U}^{k-m_k},\Delta\mathcal{G}^{k-m_k})$ \label{algline:reverse2}
        \State Compute $\Delta \mathcal{G}^{k-m_k}=Q_kR_k$ \label{algline:filtered_QR}
    \State Compute:  \label{algline:filtered_update}
\begin{equation}\label{eq:AA-update_filter1}
\begin{aligned}
&u_{AA}^{k+1} := u^{k} - H^{k} g^{k},\\  
&\text{where } H^{k} = -\beta \widehat{D} + (\Delta \mathcal{U}^{k-m_k} + \beta \widehat{D} \Delta \mathcal{G}^{k-m_k}) (R_k)^{-1}(Q_k)^{\top}.
\end{aligned}
\end{equation}     
\State Set $\begin{aligned}[t]
u^{k+1} &= \mathcal{P}u_{AA}^{k+1}, \\
i^{k+1} &= i^k + 1.
\end{aligned}$
\Else
\State Set $\begin{aligned}[t]
u^{k+1} &= \widehat{u}^{k+1}, \\
j^{k+1} &= j^k + 1.
\end{aligned}$
\EndIf
\EndFor
\end{algorithmic}
\vspace{-0.1cm}
\end{algorithm}

\begin{algorithm}[t]
\caption{Angle Filtering $(E,\;F,\; {m},\;c_s)$}\label{al:angle-filtering1}
\begin{algorithmic}[1]
\State \textbf{Require} $E,\;F$: current matrices, $ {m}$: number of columns, $c_s$: threshold
\State Compute the economy QR decomposition $F = QR$
\For{$i = 2$ , \ldots, $ {m}$}
    \State Compute $\sigma_i = |r_{ii}| / \|f_i\|_2$, where $f_i$ is column $i$ of $F$, and $r_{ii}$ is the corresponding diagonal entry of $R$
    \If{$\sigma_i < c_s$}
        \State Remove column $i$ from $E$ and $F$
    \EndIf
\EndFor
\State \textbf{Return} $(E,\;F)$
\end{algorithmic}
\end{algorithm}

\begin{algorithm}[t]
\caption{Length Filtering $(E,\;F,\; {m},\;c_s, \bar{\kappa})$}\label{al:length-filtering1}
\begin{algorithmic}[1]
\State \textbf{Require} $E,\;F$: current matrices,  {$m$}: current number of columns, $c_s$: threshold, $\bar{\kappa}$: tolerance number
\State $c_t \gets \sqrt{1 - c_s^2}$; for $j = 1, \ldots,  {m}$, compute column norms $e_j$ of $E$ and upper bounds $b_j$ for the squared norm of the $j$-th column of $R^{-1}$.
\For{$k =  {m}$ , \ldots, $1$}
    \State $C \gets \left( \sum_{j=1}^{k} \|e_j\|^2 \right) \left( \sum_{j=1}^{k} b_j \right)$
    \If{$C \leq \bar{\kappa}^2$}
        \State \textbf{break}
    \EndIf
\EndFor
\State $\bar{m} \gets k$
\State $E \gets$ first $\bar{m}$ columns of $E$
\State $F \gets$ first $\bar{m}$ columns of $F$
\State \textbf{Return} $(E,F)$
\end{algorithmic}
\end{algorithm}

\begin{algorithm}[t]
\caption{Reverse $(E,\;F)$}\label{al:reverse}
\begin{algorithmic}[1]
\State \textbf{Require} $E,\;F$ with the same number of columns
\State $ {m} \gets$ number of columns of $E$
\State  {$\widehat{E} \gets E$, $\widehat{F} \gets F$}
\For{$k = 1$ , \ldots, $ {m}$}
  \State   {$e_k \gets \hat{e}_{m-k+1}$; \quad $f_k \gets \hat{f}_{m-k+1}$}
\EndFor
\State \textbf{Return} $(E,F)$
\end{algorithmic}
\end{algorithm}

 {Having presented the \textit{AngleFilter} (Algorithm~\ref{al:angle-filtering1}) and \textit{LengthFilter} (Algorithm~\ref{al:length-filtering1}) procedures, we present the  theoretical results supporting them:}

\begin{lemma}\cite[Lemma 1.1]{PRFAA-2023}\label{eq:lemma1.51}
    Let $F\in \mathbb{R}^{n\times m}$ and let $F = QR$ be the economy QR decomposition. For $p=1,\dots, m$, let $F_p = \operatorname{span}\{f_1,\dots,f_p\}$, the subspace spanned by the first $p$ columns of $F$. Suppose there is a constant $0 < c_s \leq 1$ such that
\[
\sin(f_i, F_{i-1}) \geq c_s, \quad \text{for } i = 2,\dots,m
\]
and, defining $c_t = \sqrt{1 - c_s^2}$, suppose 
\(
\cos(f_i, f_k) \leq c_t, \;\text{for } k = 1, \ldots, i.
\)
Denote $R^{-1} = (s_{ij})$. Then it holds that
\begin{equation*}
\begin{aligned}
    &(s_{11}) = \frac{1}{\|f_1\|},  & |(s_{1j})| \leq \frac{c_t (c_t + c_s)^{j-2}}{\|f_1\| \, c_s^{j-1}}, \; 2 \leq j \leq m, \label{eq:lemma1.4} \\
    &(s_{ii}) \leq \frac{1}{\|f_i\| c_s}, \; 2 \leq i \leq m,  & |(s_{ij})| \leq \frac{c_t (c_t + c_s)^{j-i-1}}{\|f_i\| \, c_s^{j-i+1}}, \; i+1 \leq j \leq m.
\end{aligned}
\end{equation*}
\end{lemma}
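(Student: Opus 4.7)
The plan is to combine the geometric interpretation of the QR factors with a back-substitution argument on $R^{-1}$. Using the Gram--Schmidt view of the economy QR decomposition, one has $r_{11}=\|f_1\|$ and, for $i\ge 2$, $r_{ii}=\|f_i-P_{F_{i-1}}f_i\|=\|f_i\|\sin(f_i,F_{i-1})$, where $P_{F_{i-1}}$ denotes orthogonal projection onto $F_{i-1}$. The sine hypothesis therefore gives $r_{ii}\ge c_s\|f_i\|$ for $i\ge 2$, while $r_{11}=\|f_1\|$ by direct inspection. Since $R^{-1}$ is upper triangular with diagonal $(s_{ii})=1/r_{ii}$, the diagonal estimates in the statement follow immediately.

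For the off-diagonal entries I would use the back-substitution recurrence obtained from $RR^{-1}=I$: for $i<j$,
\begin{equation*}
s_{ij}=-\frac{1}{r_{ii}}\sum_{k=i+1}^{j}r_{ik}\,s_{kj}.
\end{equation*}
The missing ingredient is a sharp bound on the off-diagonal entries $r_{ik}=q_i^{\top}f_k$ for $i<k$. Writing $q_i=(f_i-P_{F_{i-1}}f_i)/r_{ii}$ and using $q_i\perp F_{i-1}$ to replace $f_k$ by $f_k-P_{F_{i-1}}f_k$ inside the inner product, the cosine hypothesis $|f_i^{\top}f_k|\le c_t\|f_i\|\|f_k\|$ together with the lower bound on $r_{ii}$ can be combined to yield an estimate of the form $|r_{ik}|\le c_t\|f_k\|/c_s$ (with the $c_s$ dropped when $i=1$, since $r_{11}=\|f_1\|$).

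I would then prove the off-diagonal bounds on $|s_{ij}|$ by downward induction on $i$ with $j$ fixed, substituting the bound on $|r_{ik}|$ and the inductive estimate on $|s_{kj}|$ into the recurrence. Each inductive step contributes a multiplicative factor of order $(c_t+c_s)/c_s$, which after $j-i-1$ steps accumulates into the $(c_t+c_s)^{j-i-1}/c_s^{j-i+1}$ pattern appearing in the conclusion; the two distinct forms of the bound, for $i=1$ versus $i\ge 2$, reflect the fact that $r_{11}=\|f_1\|$ carries no $c_s$ factor, so the case $i=1$ sits at a slightly different starting point of the induction.

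The main obstacle is the off-diagonal estimate on $|r_{ik}|$: the crude Cauchy--Schwarz bound $|q_i^{\top}f_k|\le\|f_k\|$ is by itself too weak, and one must carefully combine the orthogonality $q_i\perp F_{i-1}$, the cosine hypothesis on pairs of columns, and the sine lower bound $r_{ii}\ge c_s\|f_i\|$ to extract both the factor $c_t$ and the $c_s$ in the denominator. Once this estimate is in place, the telescoping induction that produces the final bounds is routine but requires careful bookkeeping around the $i=1$ boundary case and the $j=i+1$ base case of the recursion.
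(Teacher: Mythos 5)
The paper does not actually prove this lemma: it is imported verbatim from \cite[Lemma 1.1]{PRFAA-2023} and used as a black box, so there is no in-paper argument to compare against. Judging your sketch on its own merits: the skeleton (diagonal entries via the Gram--Schmidt identity $r_{ii}=\|f_i\|\sin(f_i,F_{i-1})$, the back-substitution recurrence $s_{ij}=-r_{ii}^{-1}\sum_{k=i+1}^{j}r_{ik}s_{kj}$, and a downward induction) is the right one and is how the cited source proceeds. However, the step you yourself flag as the missing ingredient is where the proposal goes wrong, in two ways. First, the estimate actually needed is $|r_{ik}|\le c_t\|f_k\|$ for \emph{all} $i<k$, with no $1/c_s$; one can check that with this bound the induction closes with equality against the stated constants, since
\begin{equation*}
c_t\sum_{l=1}^{d-1}(c_t+c_s)^{l-1}c_s^{(d-1)-l}+c_s^{d-1}=(c_t+c_s)^{d-1},\qquad d=j-i,
\end{equation*}
whereas your weaker form $|r_{ik}|\le c_t\|f_k\|/c_s$ for $i\ge 2$ injects an extra $1/c_s$ at every level of the recursion and produces bounds strictly larger than those claimed in the lemma. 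Second, the derivation you propose for the off-diagonal $r_{ik}$ does not deliver even that weaker form: writing $q_i=(f_i-P_{F_{i-1}}f_i)/r_{ii}$ and expanding $q_i^{\top}f_k=r_{ii}^{-1}\bigl(f_i^{\top}f_k-f_i^{\top}P_{F_{i-1}}f_k\bigr)$, the second term can only be controlled by Cauchy--Schwarz as $\|f_i\|\|f_k\|$, yielding $|r_{ik}|\le (c_t+1)\|f_k\|/c_s$, which is useless here.

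The correct route to $|r_{ik}|\le c_t\|f_k\|$ uses the sine hypothesis at index $k$ rather than the cosine hypothesis at the pair $(f_i,f_k)$: since $q_i\in F_i\subseteq F_{k-1}$ is a unit vector and $\sin(f_k,F_{k-1})\ge c_s$, the cosine of the angle between $f_k$ and any unit vector of $F_{k-1}$ is at most $\sqrt{1-c_s^2}=c_t$, so $|r_{ik}|=|q_i^{\top}f_k|\le c_t\|f_k\|$ uniformly in $i<k$ (the case $i=1$ is the special case $q_1=f_1/\|f_1\|$). With this single correction your back-substitution induction goes through and reproduces the lemma exactly, including the asymmetry between the $i=1$ and $i\ge 2$ rows, which, as you correctly observe, comes solely from $r_{11}=\|f_1\|$ versus $r_{ii}\ge c_s\|f_i\|$.
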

\begin{lemma}\cite[Proposition 1.2]{PRFAA-2023}\label{le:RK-1bound1}
Suppose the hypotheses of Lemma~\ref{eq:lemma1.51} hold. Let $(s_i)$ denote column $i$ of $R^{-1}$. The following bounds hold:
\begin{equation*}
\begin{aligned}
   & \|(s_1)\|^2 = \frac{1}{\|f_1\|^2} =: b_1, \quad \quad \|(s_2)\|^2 \leq \frac{1}{c_s^2} \left( \frac{c_t^2}{\|f_1\|^2} + \frac{1}{\|f_2\|^2} \right) =: b_2, \\
   & \|(s_j)\|^2 \leq \frac{1}{c_s^2} \left[
        \frac{c_t^2 (c_t + c_s)^{2(j-2)}}{\|f_1\|^2 \, c_s^{2(j-2)}} 
        + \sum_{i=2}^{j-1} \frac{c_t^2 (c_t + c_s)^{2(j-i-1)}}{\|f_i\|^2 \, c_s^{2(j-i)}}  + \frac{1}{\|f_j\|^2}
    \right] =: b_j,  \;  3 \leq j \leq  {m}. 
\end{aligned}
\end{equation*}
\end{lemma}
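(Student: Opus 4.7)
The plan is to derive each column-norm bound directly from the entrywise estimates on $R^{-1}$ already established in Lemma~\ref{eq:lemma1.51}. The key structural observation is that $R$, being the upper triangular factor of an economy QR decomposition, has an inverse $R^{-1}$ that is also upper triangular. Consequently, the $j$-th column of $R^{-1}$ has nonzero entries only in its first $j$ positions, so that
\[
\|s_j\|^2 \;=\; \sum_{i=1}^{j} |s_{ij}|^2.
\]
This identity reduces the proof to plugging in the pointwise bounds of Lemma~\ref{eq:lemma1.51} and summing squares.

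For $j=1$ the column has a single nonzero entry, and Lemma~\ref{eq:lemma1.51} gives $s_{11}=1/\|f_1\|$ directly, matching $b_1$. For $j=2$, one substitutes $|s_{12}| \leq c_t/(\|f_1\|\,c_s)$ (the $j=2$ case of the first-row bound) together with $s_{22}\leq 1/(\|f_2\|\,c_s)$, squares both, and sums to obtain $\|s_2\|^2 \leq c_s^{-2}\bigl(c_t^2/\|f_1\|^2 + 1/\|f_2\|^2\bigr) = b_2$. For the general case $j\geq 3$, I would split the sum into three pieces mirroring the three entrywise estimates supplied by Lemma~\ref{eq:lemma1.51}: the contribution from $|s_{1j}|^2$, those from $|s_{ij}|^2$ for $2 \leq i \leq j-1$, and the contribution from $s_{jj}^2$. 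Squaring each bound and factoring the common $c_s^{-2}$ out of all three pieces produces exactly the three terms appearing in the definition of $b_j$.

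The only mildly delicate step is keeping careful track of the exponents of $c_s$ and $(c_t+c_s)$. For instance, squaring $|s_{1j}| \leq c_t(c_t+c_s)^{j-2}/(\|f_1\|\,c_s^{j-1})$ introduces $c_s^{2(j-1)}$ in the denominator; pulling one factor of $c_s^{-2}$ outside leaves $c_s^{2(j-2)}$, which matches the first summand in $b_j$. Analogously, $c_s^{2(j-i+1)}$ reduces to $c_s^{2(j-i)}$ after the same factoring, matching the $i$-th summand. Beyond this routine algebraic reconciliation there is no substantive obstacle, since the geometric hypotheses on the sines and cosines of the columns of $F$ have already been consumed inside Lemma~\ref{eq:lemma1.51}.
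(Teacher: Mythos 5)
Your proof is correct: since $R^{-1}$ is upper triangular, $\|s_j\|^2=\sum_{i=1}^{j}|s_{ij}|^2$, and squaring and summing the entrywise bounds of Lemma~\ref{eq:lemma1.51} (after factoring out one power of $c_s^{-2}$) yields exactly $b_1$, $b_2$, and $b_j$ as stated. The paper does not reproduce a proof but cites \cite[Proposition~1.2]{PRFAA-2023}, whose argument is precisely this column-wise summation of the entrywise estimates, so your route coincides with the source.
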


In particular, the \textit{AngleFilter} procedure used at Line \ref{algline:a_filter1} of Algorithm~\ref{al-PD-safe_filtered1} ensures that the upper bounds used in \textit{LengthFilter} at Line \ref{algline:l_filter1} hold. The next lemma clarifies the properties of the output of Algorithm~\ref{al-PD-safe_filtered1}.

\begin{lemma}\label{lem:conditioning_after_stab}
   Let us assume $(E,F,m)$ to be the input of Algorithm~\ref{al:length-filtering1} with $F$ satisfying hypotheses of Lemma~\ref{eq:lemma1.51} and consider the corresponding output $(\bar{E},\bar{F},\bar{m})$, with $\bar{F}=QR$.  Then it holds  
    \begin{equation}
     \|\bar{E}\|^2\|R^{-1}\|^2 \leq \|\bar{E}\|_F^2\|R^{-1}\|_F^2=  (\sum_{i=1}^m \|\bar{e}_i\|^2)(\sum_{i=1}^m \|s_i\|^2) \leq    {\Big(}\sum_{i=1}^m \|\bar{e}_i\|^2  {\Big)\Big(}\sum_{i=1}^m b_i  {\Big)} \leq  \bar{\kappa}^2
    \end{equation}
    where $b_i$ are the upper bounds in Lemma {\ref{le:RK-1bound1}}.
\end{lemma}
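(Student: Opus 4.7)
The plan is to decompose the chain of inequalities into four pieces and handle each one using a known fact plus the specific structure of Algorithms \ref{al:angle-filtering1} and \ref{al:length-filtering1}.

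First, for the leftmost inequality $\|\bar{E}\|^2\|R^{-1}\|^2 \leq \|\bar{E}\|_F^2\|R^{-1}\|_F^2$, I would simply invoke the standard fact that the spectral norm of a matrix is dominated by its Frobenius norm (applied separately to $\bar{E}$ and to $R^{-1}$). Nothing non-trivial happens here. For the middle equality, I would use the column-wise characterisation of the Frobenius norm: $\|\bar{E}\|_F^2=\sum_i \|\bar{e}_i\|^2$ and $\|R^{-1}\|_F^2=\sum_i \|s_i\|^2$, where $s_i$ denotes the $i$-th column of $R^{-1}$ as in Lemma \ref{le:RK-1bound1}.

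Second, for the bound $\sum_i \|s_i\|^2 \leq \sum_i b_i$, the crucial point is that after \textit{AngleFilter} has been applied at Line \ref{algline:a_filter1} the columns of $\bar{F}$ satisfy the sine/cosine conditions required by Lemma \ref{eq:lemma1.51}: the test $\sigma_i=|r_{ii}|/\|f_i\|\geq c_s$ in Algorithm \ref{al:angle-filtering1} is exactly $\sin(f_i,F_{i-1})\geq c_s$, and this in turn implies $\cos(f_i,f_k)\leq c_t=\sqrt{1-c_s^2}$ for $k\leq i$. Once these hypotheses are in place, Lemma \ref{le:RK-1bound1} yields $\|s_j\|^2\leq b_j$ column by column, so summing gives the desired inequality. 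The main conceptual step here is the verification that angle filtering produces a matrix whose surviving columns indeed satisfy both bullet points of Lemma \ref{eq:lemma1.51}; I would state this as a short intermediate observation before invoking the lemma.

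Third, for the final inequality $\bigl(\sum_{i=1}^{\bar m} \|\bar{e}_i\|^2\bigr)\bigl(\sum_{i=1}^{\bar m} b_i\bigr) \leq \bar{\kappa}^2$, the argument is purely combinatorial and follows directly from the stopping rule of \textit{LengthFilter}. Algorithm \ref{al:length-filtering1} iterates $k$ downward from $m$ and exits precisely at the first $k$ for which the product $C=\bigl(\sum_{j=1}^k\|e_j\|^2\bigr)\bigl(\sum_{j=1}^k b_j\bigr)\leq \bar{\kappa}^2$; it then sets $\bar m=k$ and truncates $E,F$ to their first $\bar m$ columns. Thus by construction the output satisfies the $\bar{\kappa}^2$ bound.

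The step I expect to be the only genuinely delicate one is the second: one must carefully check that the \emph{remaining} columns after the removals in Algorithm \ref{al:angle-filtering1} still satisfy the angle hypotheses of Lemma \ref{eq:lemma1.51} with respect to their \emph{own} renumbered subspaces $F_{i-1}=\mathrm{span}\{\bar f_1,\dots,\bar f_{i-1}\}$, rather than with respect to the original pre-filtering subspaces. A clean way to handle this would be to note that deleting columns can only enlarge the sines with respect to the spanned subspaces (since the subspace $F_{i-1}$ shrinks), so the condition $\sin(\bar f_i,\bar F_{i-1})\geq c_s$ is inherited, and the pairwise cosine bound is directly preserved since it involves only pairs of surviving columns. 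With that observation in hand, the remainder of the proof is a line-by-line chaining of the four inequalities above.
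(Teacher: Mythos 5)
Your proof is correct and spells out exactly the reasoning the paper intends: the paper states Lemma~\ref{lem:conditioning_after_stab} \emph{without any explicit proof}, treating it as an immediate consequence of the spectral-vs-Frobenius norm inequality, the column-wise bounds of Lemma~\ref{le:RK-1bound1}, and the stopping rule of Algorithm~\ref{al:length-filtering1}, which is precisely your four-step chain. The one remark worth making is that your ``delicate'' second step --- checking that \emph{AngleFilter} leaves the surviving columns satisfying the hypotheses of Lemma~\ref{eq:lemma1.51} with respect to their renumbered subspaces --- is not actually part of this lemma, whose statement already \emph{assumes} the input $F$ of \emph{LengthFilter} satisfies those hypotheses; your monotonicity observation (deleting columns shrinks $F_{i-1}$ and so can only increase the sines) is nonetheless the correct justification for why that assumption holds when the lemma is invoked inside Algorithm~\ref{al-PD-safe_filtered1}. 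A small caveat your argument shares with the paper: the final inequality requires the loop in Algorithm~\ref{al:length-filtering1} to actually reach an index $k$ with $C\le\bar{\kappa}^2$; if even $k=1$ fails the test, the loop exits with $\bar{m}=1$ and the stated bound is not guaranteed, so strictly speaking this degenerate case should be excluded or handled separately.
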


\begin{theorem}\label{th:FAA-convergence}
 {Let the sequence $\{u^k\}$ be generated by Algorithm~\ref{al-PD-safe_filtered1} with angle filtering parameter $c_s \in (0,1]$ and length filtering parameter $\bar{\kappa} > 0$. Then there exists a constant
\[
M = \beta\|\widehat{D}\| + \bar{\kappa} + \dfrac{2c_2}{c_1}\beta\|\widehat{D}\|\bar{\kappa},
\]
independent of $k$ such that $\|H^k\| \leq M$ for all $k$. Consequently, by Theorem~\ref{th:con}, the sequence $\{u^k\}$ converges globally to a fixed point $u^* \in \operatorname{Fix}\ttt$.}
\end{theorem}
\begin{proof}{Proof}
Before starting the proof, we observe that in Algorithm~\ref{al-PD-safe_filtered1}, the computation of the QR decomposition at Line 15, is motivated by observing that if $\Delta\mathcal{G}^{k-m_k}=Q_kR_k$, we have that
  \begin{equation*}
    ((\Delta\mathcal{G}^{k-m_k})^T\Delta\mathcal{G}^{k-m_k})^{-1}(\Delta\mathcal{G}^{k-m_k})^T= R_k^{-1} Q_k^T,
\end{equation*}  
and hence the particular Quasi-Newton  updated in \eqref{eq:AA-update_filter1}.
We have, moreover,
\begin{equation}
\begin{aligned}\label{ineq:Hk-bound1}
        \|H^{k}\|
       & \leq \beta\|\widehat{D}\|
    + \|\Delta \mathcal{U}^{k-m_k}\|\|R_k^{-1}\| + \beta \|\widehat{D} \|\|  \Delta \mathcal{G}^{k-m_k}\| \|R_k^{-1}\|.
    \end{aligned}
\end{equation}
{
Due to the equivalence of norms and the $\M$-firm non expansiveness of $\ttt$, there exists $c_1,~\;c_2~>~0$ such that
\begin{equation}
\begin{aligned}\label{ineq:deltagk}
        c_1\|\Delta g^{k-j-1}\|
&\leq\|g^{k-j}-g^{k-j-1}\|_{\M} =\|\ttt u^{k-j}-u^{k-j}-\ttt u^{k-j-1}+u^{k-j-1}\|_{\M}\\
&\leq 2\|u^{k-j}-u^{k-j-1}\|_{\M}\leq2c_2\|\Delta u^{k-j-1}\|,
\end{aligned}
\end{equation}
i.e., $\|\Delta \mathcal{G}^{k-m_k}\|_F \leq \dfrac{2c_2}{c_1} \|\Delta \mathcal{U}^{k-m_k}\|_F.$
Thus the claim follows by the inequalities
\begin{equation*}
\begin{aligned}
        \|H^{k}\| & \leq \beta\|\widehat{D}\|
    + \|\Delta \mathcal{U}^{k-m_k}\|\|R_k^{-1}\| + \beta \|\widehat{D} \|\|  \Delta \mathcal{G}^{k-m_k}\| \|R_k^{-1}\|  \\
    & \leq \beta\|\widehat{D}\|
    + \|\Delta \mathcal{U}^{k-m_k}\|_F\|R_k^{-1}\|_F + \beta \|\widehat{D} \|\|  \Delta \mathcal{G}^{k-m_k}\|_F \|R_k^{-1}\|_F \\
    & \leq \beta\|\widehat{D}\|
    +  {\bar{\kappa}} + \dfrac{2c_2}{c_1} \beta \|\widehat{D} \|   {\bar{\kappa}} =:M,
\end{aligned}
\end{equation*}
where the last inequality follows from Lemma~\ref{lem:conditioning_after_stab}.
}
\end{proof}

\section{Numerical  {Experiments}}\label{se:NE}
 {This section presents a comprehensive numerical evaluation of the proposed algorithms. We organise the experiments into two parts. In Section~\ref{se:vanilla_comparison}, we compare AA-PDHG (Algorithm \ref{al-PD-safe}), FAA-PDHG (Algorithm \ref{al-PD-safe_filtered1}), and vanilla PDHG on selected LP instances to demonstrate the effectiveness of Anderson Acceleration. In Section~\ref{se:rpdhg_comparison}, we address the central question of this work: whether AA can serve as a competitive alternative to the restart strategy proposed by \cite{fasterPDHG-23}. To this end, we compare AA-PDHG against rPDHG on the full MIPLIB dataset of LP relaxations.} All  {experiments are} carried out in \textit{Julia 1.6.5}  {on the} \textit{University of Southampton's Iridis 6 high-performance computing cluster} {. The} code is publicly available at \url{https://github.com/ZYXYJS/AA-PD}.

Before presenting the numerical results, we note that the structure of the AA update allows the associated matrix $\Delta\mathcal{G}^{k-m_k}$ to be updated incrementally when columns are added or removed \cite{OWAAre23}. Instead of recomputing the entire least-squares system at each iteration, we update the corresponding inner products, which significantly reduces the computational cost.  {We emphasize that the per-iteration overhead of the AA update is modest: the AA least-squares subproblem has one of the two dimensions equal   {at most} to $m_A\times (n+m)$ and its solution cost is negligible when $m_A$ is kept small. The additional memory requirement is $O(m_A(n+m))$ for storing the matrices $\Delta\mathcal{U}^{k-m_k}$ and $\Delta\mathcal{G}^{k-m_k}$, which is also small relative to the storage of the constraint matrix $K$.} Besides, the implemented version of Algorithm~\ref{al-PD-safe_filtered1}, employs a slightly different update strategy for the matrices $\Delta \mathcal{U}^{k-m_k}$ and $\Delta \mathcal{G}^{k-m_k}$ compared to the description in Section~\ref{se:FAA-PDHG}. Specifically, rather than fully recomputing these matrices as in Line~\ref{algline:AA_mem_filter}, the implementation appends the most recent vectors $\Delta u^{k-1}$ and $\Delta g^{k-1}$ to the matrices obtained after applying the filtering procedures, as returned in Lines~\ref{algline:a_filter1} and~\ref{algline:l_filter1}. To ensure that the number of columns does not exceed the prescribed memory size $m_A$, the matrices produced by the filtering procedures may discard the oldest information. This modification can be regarded as an adaptive memory selection mechanism, prioritising the most recent and relevant information gathered through the fixed-point iteration.

 {\begin{remark}\label{se:Dhat_construction}
The diagonal matrix $\widehat{D}$ in Algorithms~\ref{al-PD-safe} and~\ref{al-PD-safe_filtered1} is constructed from the constraint matrix $K$ as follows. Writing $\widehat{D} = \operatorname{diag}(\widehat{D}_1, \widehat{D}_2)$, where $\widehat{D}_1 \in \mathbb{R}^{n \times n}$ corresponds to the dual variables and $\widehat{D}_2 \in \mathbb{R}^{m \times m}$ corresponds to the primal variables, each diagonal entry is defined as
\[
(\widehat{D}_1)_{jj} = \sqrt{\max\Big\{\max_{i} |K_{ij}|,\; 10^{-10}\Big\}}, \quad j = 1, \dots, n.
\]
\[
(\widehat{D}_2)_{ii} = \sqrt{\max\Big\{\max_{j} |K_{ij}|,\; 10^{-10}\Big\}}, \quad i = 1, \dots, m,
\]
This construction captures the row and column scaling of the constraint matrix, thereby providing a diagonal preconditioning effect analogous to that discussed in \cite{diagonalPDHG-11}. The threshold $10^{-10}$ prevents division by zero for rows or columns of $K$ that are identically zero.
\end{remark}}

\begin{remark}(Termination conditions)\label{re:constrcution-termi-con} At Line~\ref{algline-fixedpointres} of Algorithms~\ref{al-PD-safe} and \ref{al-PD-safe_filtered1},  {instead of the termination conditions based on the fixed-point point residuals, we use} a stopping criterion based on the KKT residual as proposed in \cite{google2022p}. In particular, for $x\in X,\;y\in Y, \;\lambda \in\Lambda,$ such  {stopping } conditions are given as follows:
\begin{align}
| q^T y + l^T \lambda^+ - u^T \lambda^- - c^T x |&\leq \epsilon (1 + | q^T y + l^T \lambda^+ - u^T \lambda^- | + | c^T x |), \label{duality-gap} \\
\left\| \begin{pmatrix} b -Ax \\ (h - Gx)_+ \end{pmatrix} \right\|_2
&\leq \epsilon (1 + \| q \|_2),\label{primal-feasi} \\
\| c - K^T y - \lambda \|_2 &\leq \epsilon (1 + \| c \|_2) \label{dual-feasi}.
\end{align}
In general, equation \eqref{duality-gap} measures the relative duality gap $r_g$, \eqref{primal-feasi} the relative primal infeasibility $r_p$,  whereas \eqref{dual-feasi} is the relative dual infeasibility $r_d$, where $\epsilon$ controls the quality of the sought approximate solution.  {In the following experiments, the term  \textit{KKT residual} denotes the quantity}
\begin{align*}
    KKT= \max\{r_g,\; r_p,\; r_d\}.
\end{align*}
It is important to note that, since the PDHG algorithm does not explicitly include reduced-cost variable \( \lambda \), to evaluate  \eqref{dual-feasi},  $\lambda = \p_{\Lambda}(c - K^T y)$ is used. This is because $c-K^{\top}y=\lambda$ with $\lambda \in \Lambda$ represent the feasibility conditions of the dual formulation \eqref{DUAL-2}.
 {Throughout the experiments, unless otherwise stated, we use a target accuracy of $\epsilon = 10^{-4}$ for the KKT residual, which is the same tolerance used in comparable studies of first-order methods for LP \cite{fasterPDHG-23}. All algorithms are terminated when the KKT residual falls below this threshold or a maximum time limit of one hour is reached. The same termination criteria are applied uniformly to all methods in each comparison.}
\end{remark}

 {
\begin{remark}(Primal weight update)  {We note that in \cite{fasterPDHG-23}, rPDHG does not consider an additional primal weight update. Nevertheless, since primal weight updates are considered in \cite{google2022p}, we also conduct the experiments in which both methods are combined with their respective primal weight update strategies. In particular,} instead of using a fixed stepsize throughout, we consider an adaptive primal weight update in the experiments of Section~\ref{se:rpdhg_comparison}. 
The update procedure is similar to that in \cite{google2022p}, but differs in several aspects. Specifically, in Algorithm \ref{alg:smooting}, we present the details of the primal weighting strategy employed in the following. It is important to note that we propose the use of a periodic primal weight update scheme rather than updating it at every iteration. Frequent updates are indeed observed to destroy the stability of Anderson acceleration. Therefore, the update is performed only at fixed intervals of length {$U_P$}, and the accumulated AA history is reset after each update.
\begin{algorithm}[H] 
\caption{Primal weight update}\label{alg:smooting}
\begin{algorithmic}[1]
\State \textbf{Input:}  $x^{k},\;y^k,\; \omega^{k-1}$, smoothing parameter $\theta$,  $\epsilon_{\mathrm{zero}}>0$, $U_P \in \mathbb{N}$.
\If {$k>0$ and $k\equiv 0\; \text{(mod } U_P \text{)}$}
    \State \text{Compute: }  {$\Delta_x = \|x^{k} - x^{k-U_p}\|$,\; $\Delta_y = \|y^{k} - y^{k-U_p}\|$}
    \If{$\Delta_x > \epsilon_{\mathrm{zero}}$ \textbf{and} $\Delta_y > \epsilon_{\mathrm{zero}}$}
    \State\label{alstep:resetAA}  {Reset Anderson Memory (i.e., $\Delta \mathcal{U}^{k-m_k},\; \Delta \mathcal{G}^{k-m_k}$)}
        \State \Return $\omega^k:=\exp\left( \theta \log\left(\frac{\Delta_y}{\Delta_x}\right) + (1-\theta)\log(\omega^{k-1}) \right)$
    \Else
        \State \Return $\omega^{k-1}$
    \EndIf
\EndIf
\end{algorithmic}
\end{algorithm}
\end{remark}
}

 {
\begin{remark}\label{re:pwform}
After having computed the primal weight update weight $\omega$, in \eqref{pdlp-x-update} and \eqref{pdlp-y-update}, we choose
$$
\tau = \frac{\gamma}{\omega}, \quad \sigma = \gamma \omega,$$
where $\gamma$ denotes the step size.
Since this primal weight update can be viewed as a rescaling of the primal and dual step sizes, and the update of $\widehat{D}$ in Algorithm~\ref{al-PD-safe} also achieves such a rescaling via a diagonal matrix, we accordingly update $\widehat{D}$ when applying the primal weight update so that it remains consistent with the corresponding changes in the primal and dual step sizes. In particular, we update $\widehat{D}$ by using:
$$ \widehat D
    =
    (1-\alpha)D_1 + \alpha R,
$$
where $D_1=\operatorname{diag}(\dfrac{\textbf{1}_{n}}{\omega}, \omega\textbf{1}_{m}),\;R=\operatorname{diag}(\dfrac{\widehat{D}_1}{\omega}, \widehat{D}_2\omega)$, see Remark \ref{se:Dhat_construction} for definitions of $\widehat{D}_1$ and $\widehat{D}_2$.
In addition, the choice of $\alpha$ is adapted according to the relative
decrease of the fixed-point residual. Let
\[
    r_k = \|g_k\|,
    \qquad
    \rho_k =
    \frac{r_{k-1} - r_k}
         {\max(r_{k-1},\varepsilon)}.
\]
If the residual decreases sufficiently, we select $\alpha = 0.1$, otherwise, we use $\alpha = 0.9.$
The corresponding experiment results with this update are provided in Supplementary Material \ref{appen:hyper}.
\end{remark}
}

\subsection{Comparison with Vanilla PDHG}\label{se:vanilla_comparison}
 {We first assess the effectiveness of Anderson Acceleration by comparing AA-PDHG, FAA-PDHG, and vanilla PDHG on selected LP instances from the MIPLIB 2017 dataset \cite{MILP}.}

In this part, to preserve the original problem structure and ensure a fair comparison, we do not apply any pre-solve or preconditioning for the selected instances. Moreover, 
 For all algorithms, we apply the same step size, which is estimated using a power iteration, see \cite{power-iter-96} {.}

Concerning the  {safeguard parameter} $D$  {at} Line~\ref{algline:safeguard} of Algorithm~\ref{al-PD-safe}, we observed that  {its} choice has a substantial impact on the overall performance of the algorithm.  {Since}
$D$ controls how often Anderson Acceleration is accepted and applied throughout the iterations {,} a larger value of $D$ allows more AA steps, thereby increasing the frequency of acceleration.  {Figure \ref{fig:AAdifferent-D} illustrates the KKT residual history for a representative LP instance under different choices of $D$, showing that both excessively large and excessively small values lead to unstable residual decay.}  {This observation guided our choice of the search range for $D$ in the subsequent experiments. The regularisation parameter $\eta$ is fixed at $10^{-10}$ throughout, as this value proved sufficient across all test instances considered in this section.}

 {Regarding the memory size $m_A$, the existing literature \cite{AA-CON-TAK,AA-FPI-NI,AA-IMPROVE-PROOF-ECPRX} suggests that values in the range $5$ to $20$ yield the best performance. In general, a larger $m_A$ reduces the number of iterations at the cost of increased per-iteration computational overhead. We report results for $m_A=5$ and $m_A=10$ in Figure~\ref{fi:fixresidual-iteration} with tolerance $\texttt{tol=1e-4}$. We do not present an analogous comparison for FAA-PDHG, since its adaptive memory update makes the effect of $m_A$ on iteration count and runtime less directly interpretable.}

\begin{figure}[!t]
\centering
\includegraphics[width=\linewidth]{./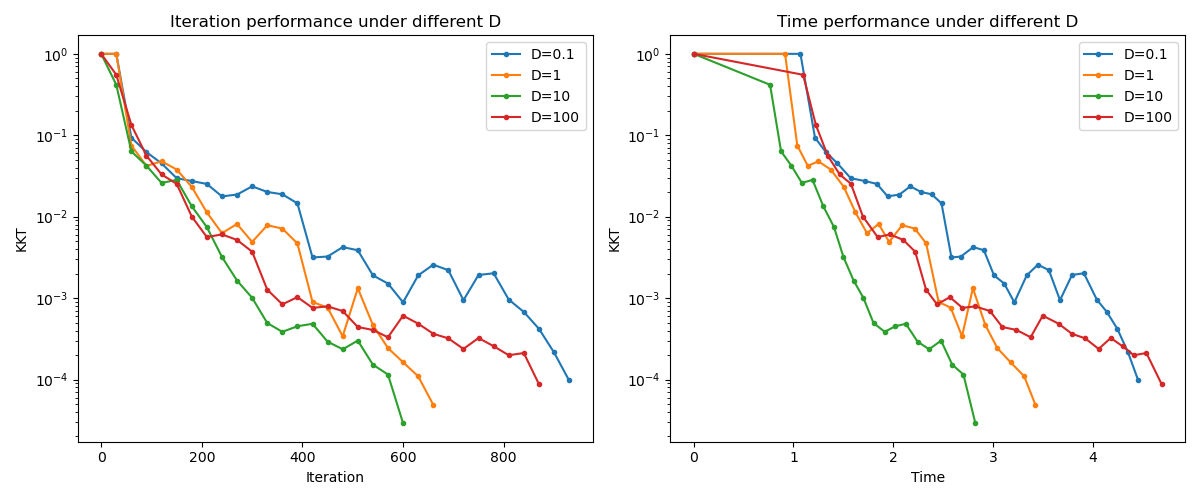}
\caption{ {KKT residual history of AA-PDHG with $m_A = 5$ for different values of the safeguard parameter $D$. Excessively large or small values of $D$ lead to unstable residual decay, as Anderson Acceleration is activated either too infrequently or too aggressively.}
}\label{fig:AAdifferent-D}
\end{figure}

 {Figure~\ref{fi:fixresidual-iteration} compares the performance of vanilla PDHG, FAA-PDHG ($m_A=5$), and AA-PDHG ($m_A=5,10$) in terms of both iteration count and computational time. For FAA-PDHG, we set $c_s=0.2$ and $\bar{\kappa}={10^8}$. To ensure a fair assessment, we fix $D=10$ and $\varepsilon=1$ for each instance.}

 {We observe from the left panels of Figure~\ref{fi:fixresidual-iteration} that, in the initial iterations, AA-PDHG and FAA-PDHG exhibit the same fluctuating behaviour as vanilla PDHG. However, once sufficient historical information is accumulated, both accelerated methods reduce the KKT residual at a substantially faster rate, in terms of both iterations and wall-clock time.
Moreover, setting $m_A = 10$ yields better iteration performance than $m_A = 5$, consistent with the general behaviour of limited-memory quasi-Newton methods.}

\begin{figure}[!t]
\centering
    \includegraphics[width=0.4\linewidth]{./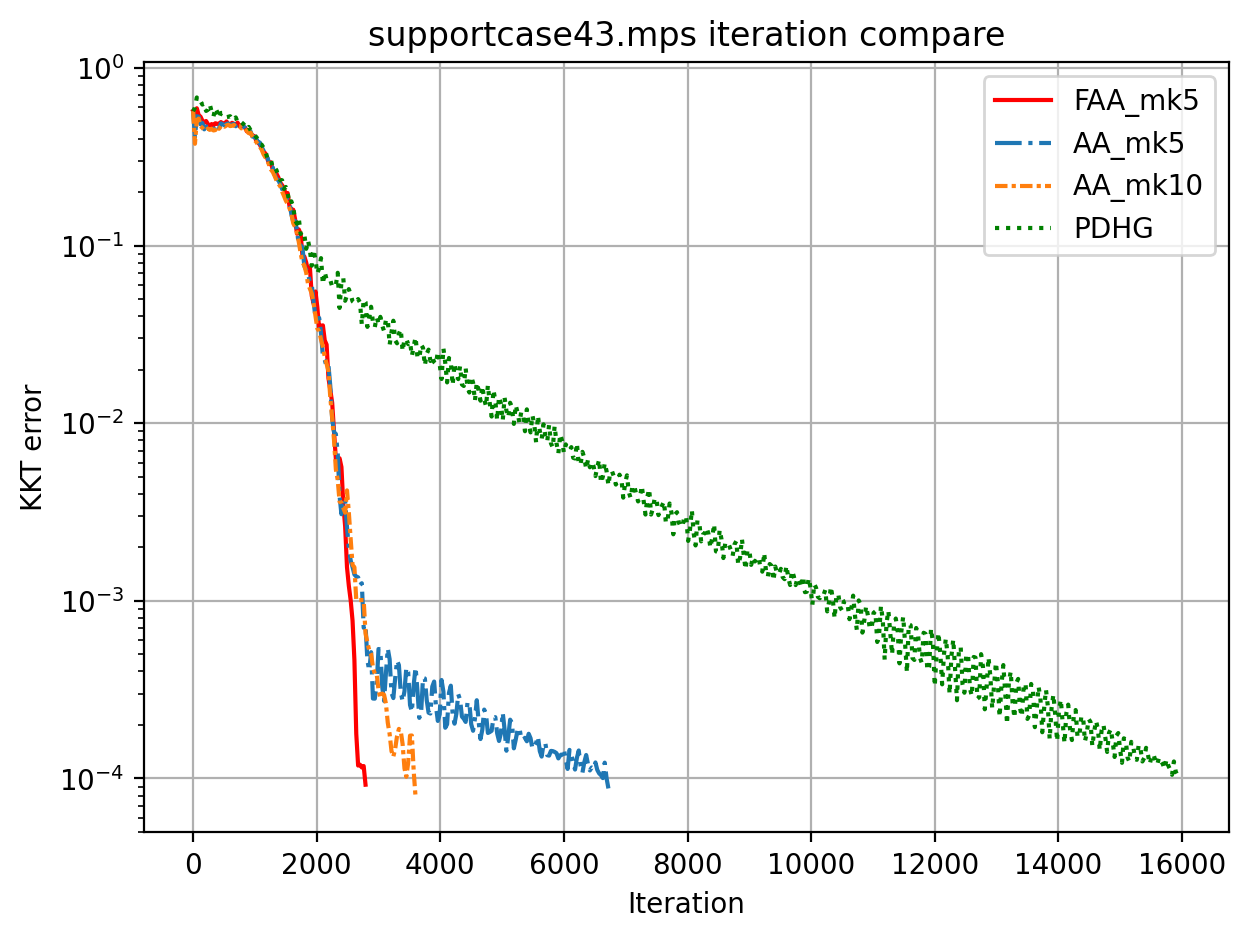}
    \includegraphics[width=0.4\linewidth]{./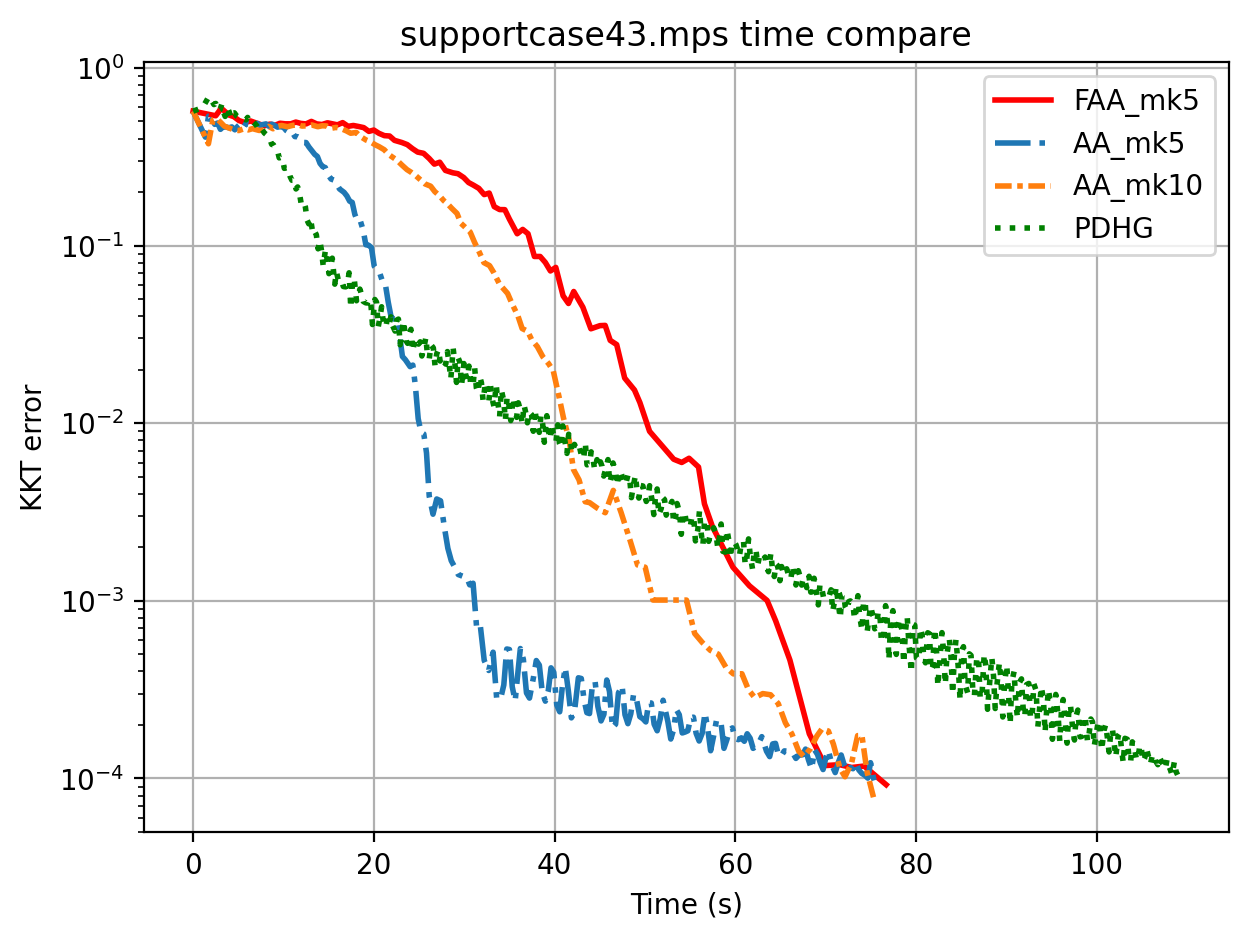}
    \includegraphics[width=0.4\linewidth]{./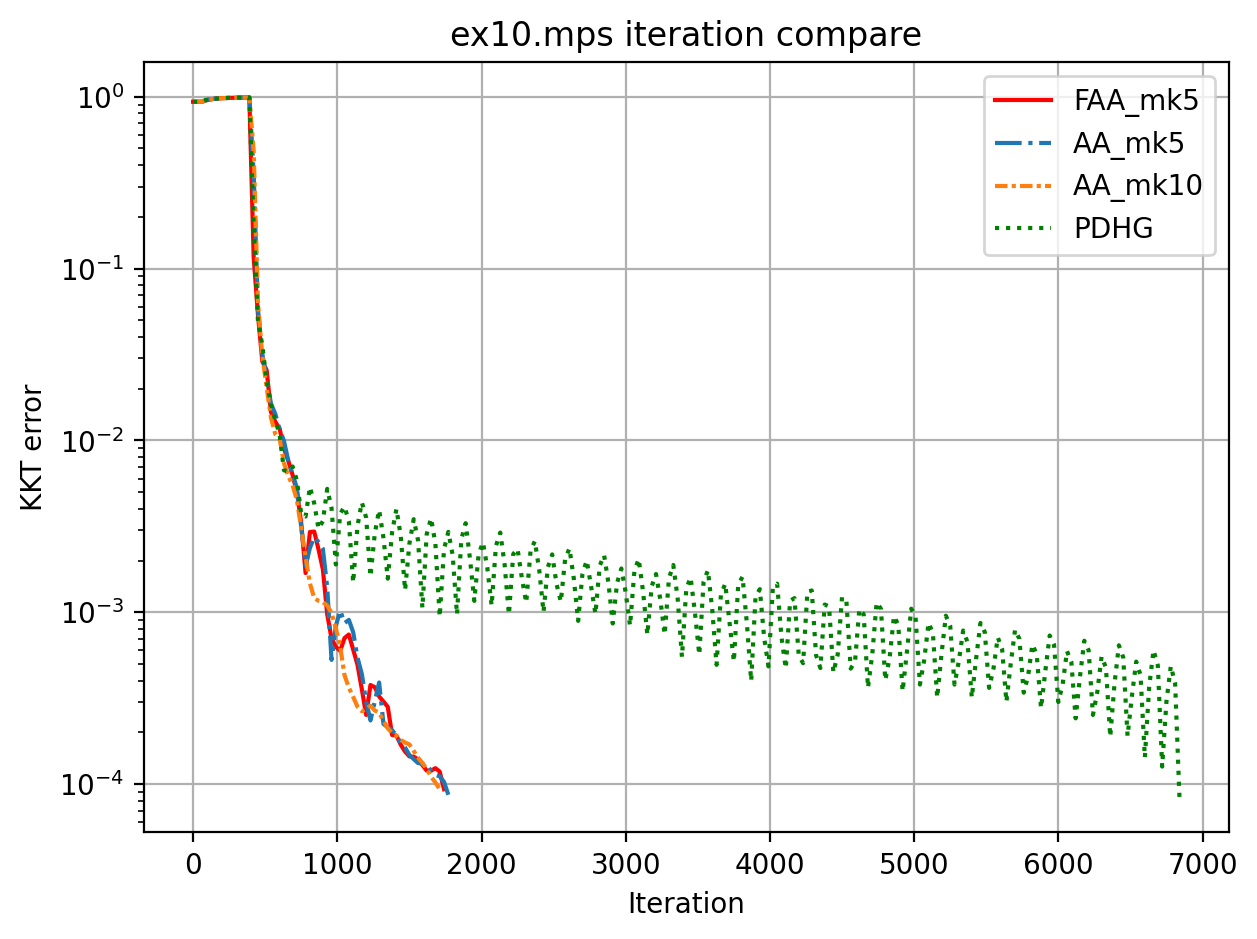}
    \includegraphics[width=0.4\linewidth]{./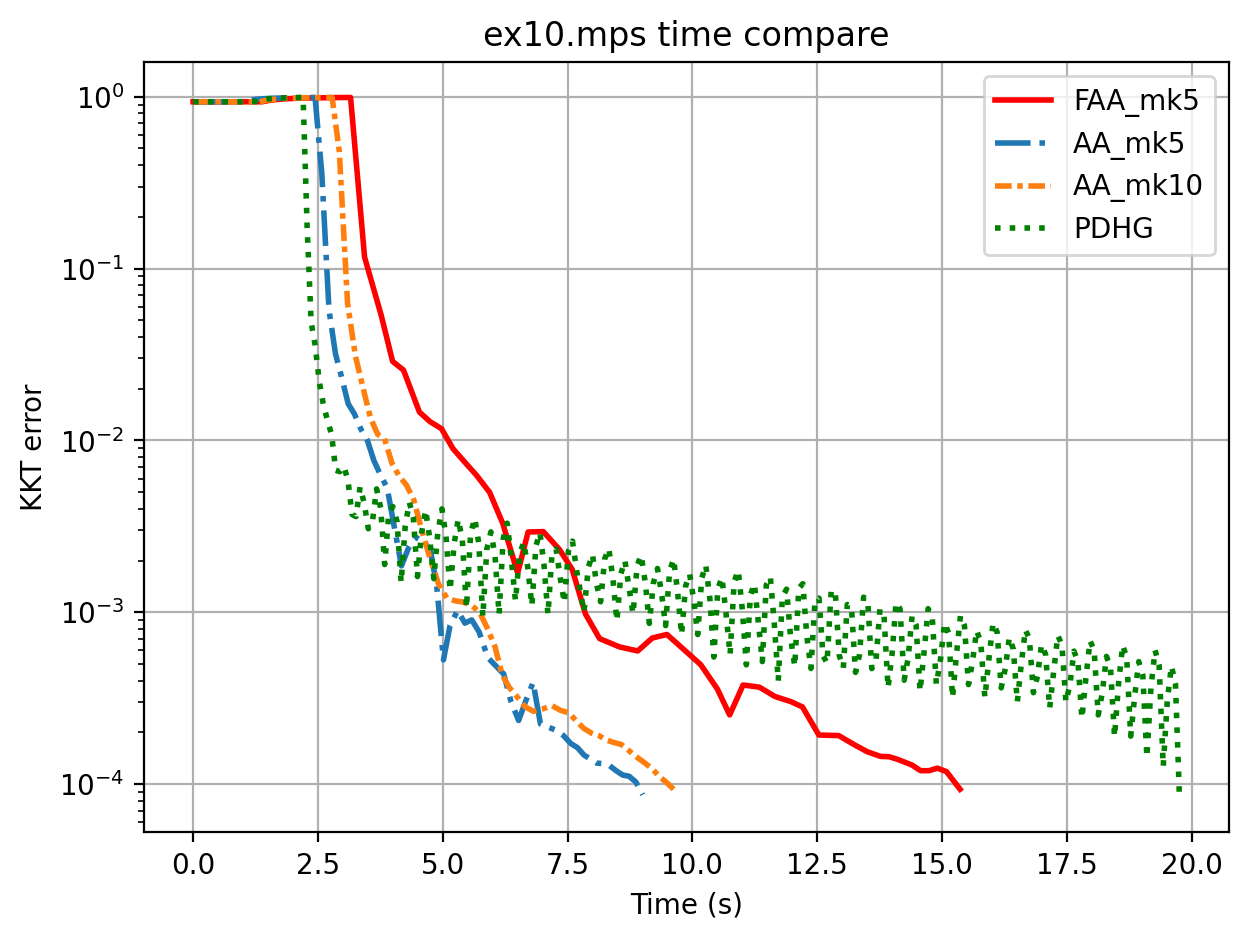}
    \includegraphics[width=0.4\linewidth]{./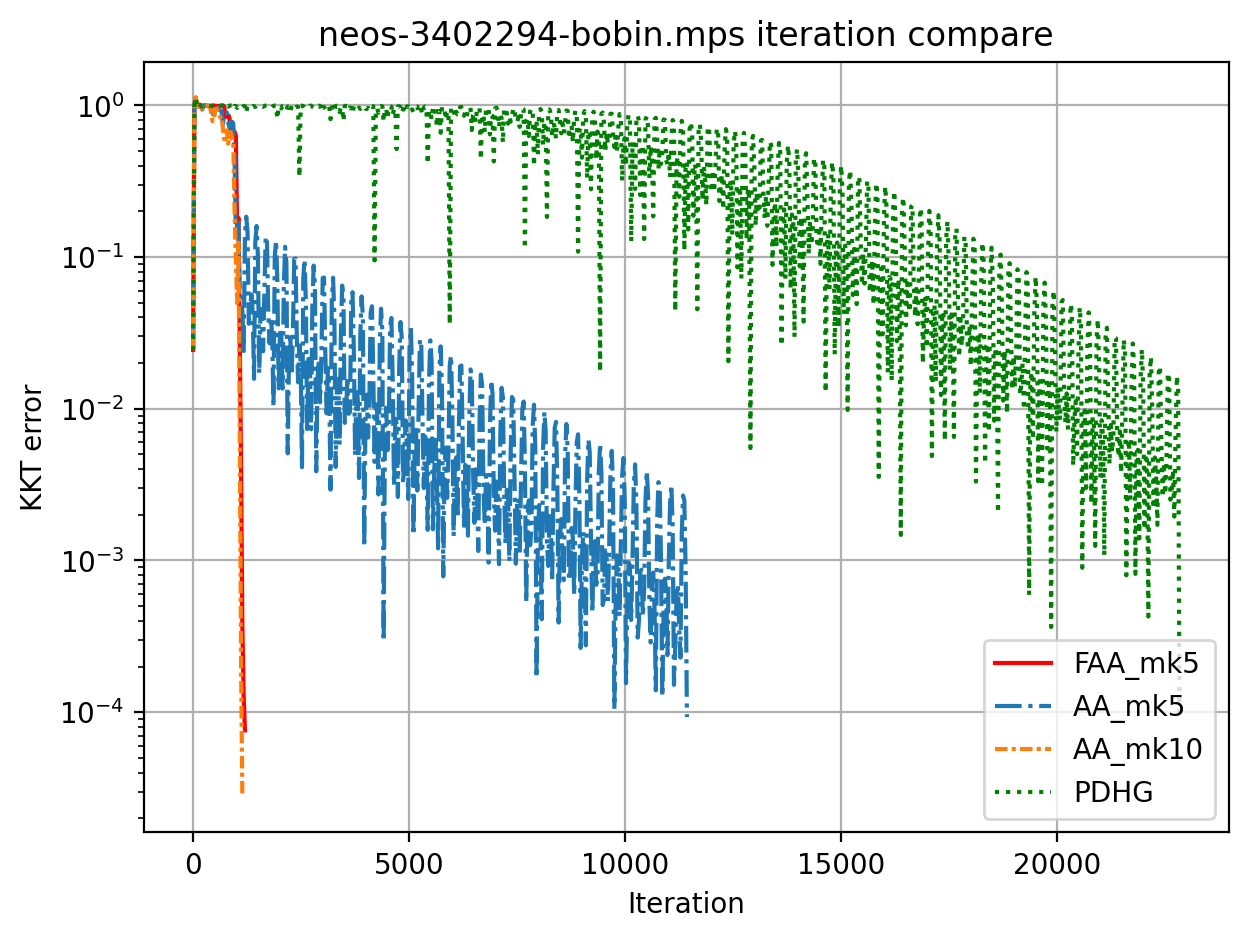}
    \includegraphics[width=0.4\linewidth]{./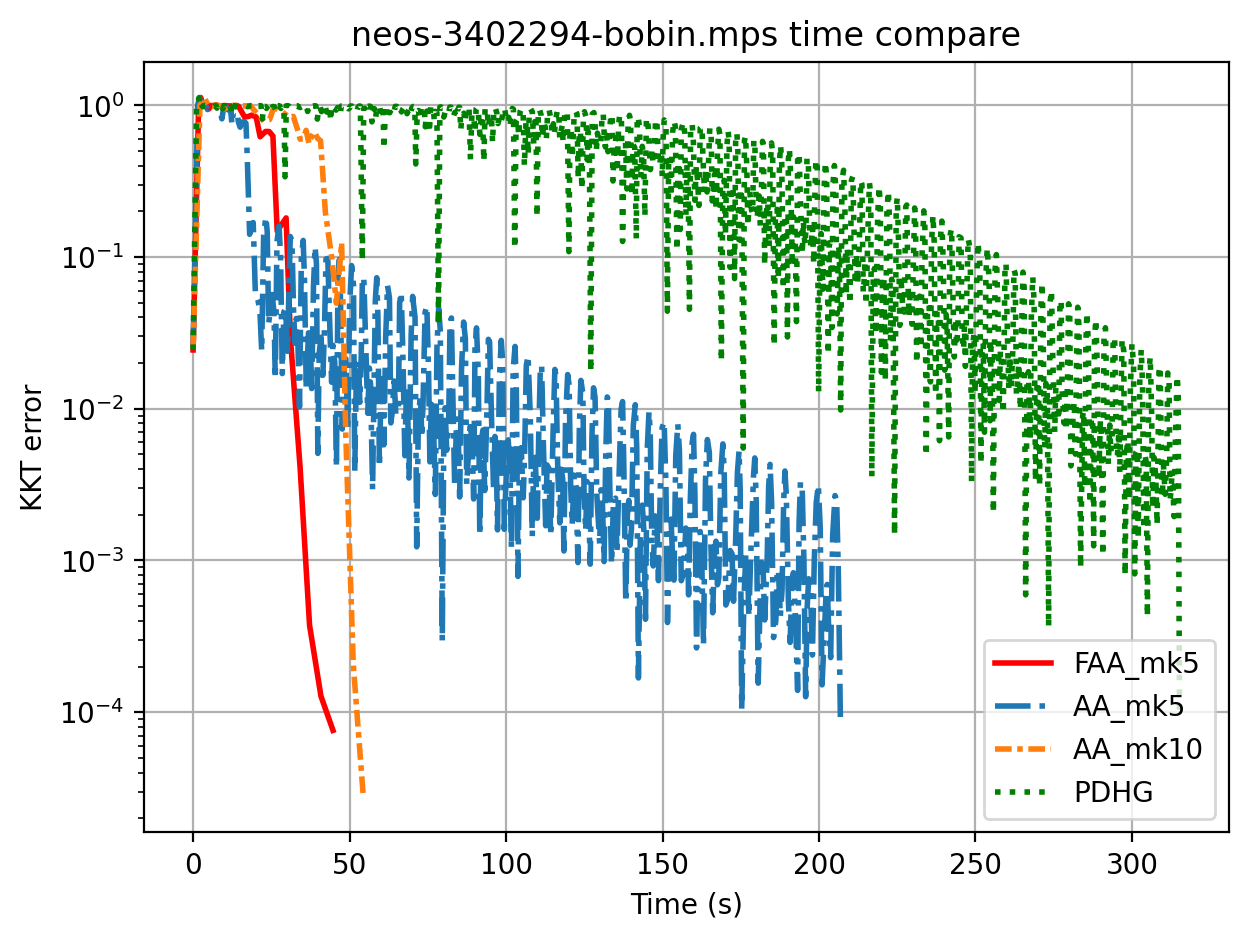}
\caption{The KKT residual trajectory comparison over iterations (left) and time (right).}
\label{fi:fixresidual-iteration}
\end{figure}

\subsection{Comparison with rPDHG}\label{se:rpdhg_comparison}
 {We now turn to the central question of this work: whether Anderson Acceleration can serve as a competitive alternative to the restart strategy for PDHG. To this end, we compare the performance of AA-PDHG against restart PDHG (rPDHG) as proposed by \cite{fasterPDHG-23}.}

 {We begin with a simple illustrative example. Figure \ref{fig:rPD-AA-cir} compares the iterate trajectories of AA-PDHG ($m_A=5$), fixed-restart PDHG, and vanilla PDHG for the following feasibility problem:}
\begin{equation}\label{eq:simple-feasibility}
\begin{aligned}
    &\min_x \; 0,\\
    &\text{s.t. } x=3,\; x\geq 0.
\end{aligned}
\end{equation}

\begin{figure}[!t]
\centering
    \includegraphics[width=0.5\linewidth]{./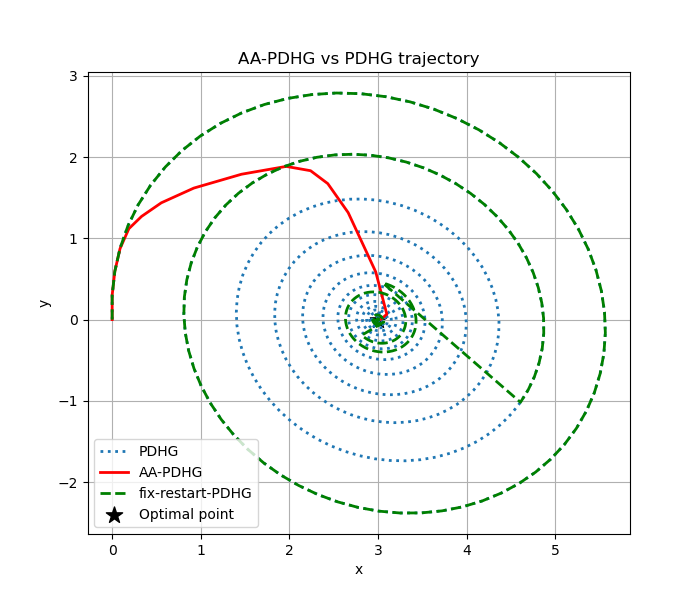}
    \caption{Comparison of the iterates produced by AA-PDHG, rPDHG and Vanilla PDHG. The optimal value is $(3,0)$. We use the same step-size $0.1$ for all. The restart length for rPDHG is 75.}
    \label{fig:rPD-AA-cir}
\end{figure}

 {From Figure \ref{fig:rPD-AA-cir}, we observe that AA-PDHG converges to the optimal solution in only a few iterations. In contrast, rPDHG initially exhibits a spiral trajectory, moves rapidly toward the solution, but then spirals again before converging. This qualitative difference illustrates the distinct acceleration mechanisms: while the restart strategy periodically reinitialises the iterates, AA continuously refines the search direction using accumulated historical information.}

 {For the comparison with rPDHG, the chosen instances are the same as in \cite{google2022p} and the complete list can be found in \texttt{mip\_relaxations\_instance\_list} of the related GitHub repository. Such instances have in-between $100K$-$10M$ non zeros, placing them in a medium-to-large-scale regime that is sufficient to reveal meaningful differences in algorithmic behaviour.}

 {Moreover, in the following,  \textit{pre-solve} and \textit{preconditioning/scaling}  will indicate that the problem instances are preprocessed using the same techniques as in \cite{google2022p}. Specifically, the preconditioning/scaling is performed using the Chambole-Pock technique combined with Ruiz scaling for $10$ iterations. 
It is important to note that in this setting, the filtering step used in FAA-PDHG incurs considerable computational overhead, making the method impractical. Since our objective is to assess whether AA can serve as a competitive alternative to the restart strategy, we focus exclusively on comparing AA-PDHG against rPDHG. 
  {We observe that, despite the presence of the Tikhonov regularisation parameter $\eta$, Algorithm~\ref{al-PD-safe} does not guarantee the uniform boundedness of $\|H^k\|$ in general. Indeed, from the expression of $H^k$ in \eqref{eq:AA-update_filter1}, a standard submultiplicativity argument yields
\[
\|H^k\| \leq \beta\|\widehat{D}\| + \big\|\Delta\mathcal{U}^{k-m_k} + \beta\widehat{D}\,\Delta\mathcal{G}^{k-m_k}\big\|\,\big\|\big((\Delta\mathcal{G}^{k-m_k})^\top\Delta\mathcal{G}^{k-m_k} + \eta I\big)^{-1}(\Delta\mathcal{G}^{k-m_k})^\top\big\|.
\]
The Tikhonov regularisation controls the second factor: since the singular values of $(A^\top A + \eta I)^{-1}A^\top$ are $\sigma_i/(\sigma_i^2 + \eta)$ and $\max_{\sigma \geq 0}\,\sigma/(\sigma^2 + \eta) = 1/(2\sqrt{\eta})$, we obtain
\[
\big\|\big((\Delta\mathcal{G}^{k-m_k})^\top\Delta\mathcal{G}^{k-m_k} + \eta I\big)^{-1}(\Delta\mathcal{G}^{k-m_k})^\top\big\| \leq \frac{1}{2\sqrt{\eta}}.
\]
However, the first factor $\|\Delta\mathcal{U}^{k-m_k} + \beta\widehat{D}\,\Delta\mathcal{G}^{k-m_k}\|$ depends on the norms of the iterate differences, which are not a priori uniformly bounded. Hence, $\eta$ alone does not suffice to ensure $\|H^k\| \leq M$ for all~$k$. Nonetheless, keeping the memory size $m_A$ small mitigates this issue in practice. Since $\Delta\mathcal{U}^{k-m_k}$ has at most $m_k \;(\leq m_A)$ columns, we have
\[
\|\Delta\mathcal{U}^{k-m_k}\| \leq \|\Delta\mathcal{U}^{k-m_k}\|_F = \bigg(\sum_{j=1}^{m_k}\|\Delta u^{k-j}\|^2\bigg)^{1/2} \leq \sqrt{m_k}\,\max_{1 \leq j \leq m_k}\|\Delta u^{k-j}\|,
\]
and similarly for $\Delta\mathcal{G}^{k-m_k}$. With a small value of $m_A$, only a few recent iterate differences contribute to $\|H^k\|$, and as the algorithm converges -- ensured by the safeguard mechanism -- these differences decrease in magnitude, effectively keeping $\|H^k\|$ controlled.}
The rPDHG implementation used in our experiments is based on the publicly available code at \url{https://github.com/google-research/google-research/tree/master/restarting_FOM_for_LP}. To ensure a fair comparison, both AA-PDHG and rPDHG use the same pre-solve, preconditioning, step-size selection, and primal weight update procedures. The only difference lies in the acceleration mechanism: restart for rPDHG versus Anderson Acceleration for AA-PDHG.}

 {The experimental procedure is as follows. We first conduct hyperparameter tuning on 50 randomly selected instances from the dataset, using performance profiles \cite{ASPerprof2016} based on running time (and iteration count for completeness) to determine the best configuration. We then evaluate AA-PDHG with its tuned hyperparameters against rPDHG on both the 50 tuning instances and the entire dataset. Two instances failed during the pre-solve phase, yielding a final comparison on 381 instances.}

 {The hyperparameters of AA-PDHG were selected through a systematic step-by-step tuning procedure on 50 randomly selected instances from the dataset. The tuning covers the memory size $m_A$, the regularization parameter $\eta$, and the primal weight update strategy. The detailed parameter selection experiments and their results are reported in Section~\ref{appen:hyper} of the Supplementary Material. Based on this analysis, the final configuration used in the following 
comparisons is: $m_A = 5$, $\eta = 10^{-10}$, smoothing parameter $\theta = 0.5$, 
and primal weight update every $3000$ iterations.  For the safeguard parameters, 
we fix $D = 1$ and $\varepsilon = 1$  as these choices provide a 
good balance between acceleration frequency and stability across all tested instances.  
    }
\begin{figure}[htb!]
    \centering
    \begin{subfigure}{0.4\linewidth}
        \centering
    \includegraphics[width=\linewidth]{./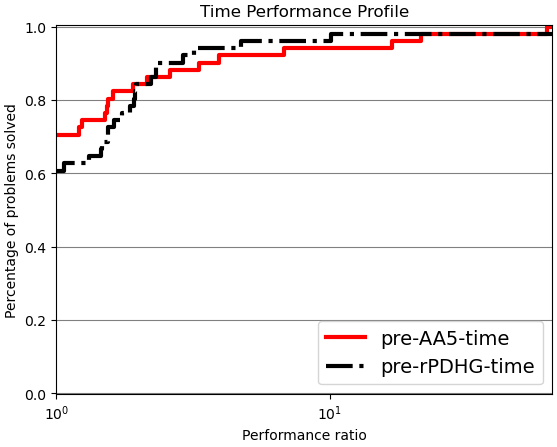}
    \end{subfigure}
    \hfill
    \begin{subfigure}{0.4\linewidth}
    \includegraphics[width=\linewidth]{./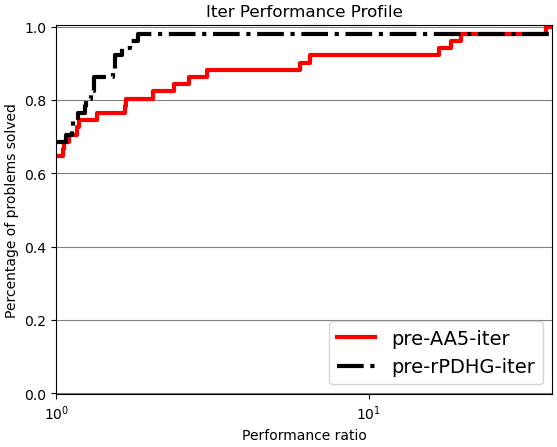}
    \end{subfigure}
    \caption{Time and iteration performance comparison between AA-PDHG and rPDHG on pre-solved randomly selected 50 instances.}
    \label{fig:AA-rPDHG-pre-rm50}
\end{figure}

 {Figure \ref{fig:AA-rPDHG-pre-rm50} shows both, the time and iteration comparison of AA-PDHG and rPDHG on $50$ randomly selected instances.}
 {Note that comparison in Figure \ref{fig:AA-rPDHG-pre-rm50} does not include the primal-weight update for both algorithms. We observe that AA-PDHG is the fastest solver for about $70$\% of the selected instances. }

\begin{figure}[htb!]
\centering
\begin{subfigure}{0.42\linewidth}
\centering
\includegraphics[width=\linewidth]{./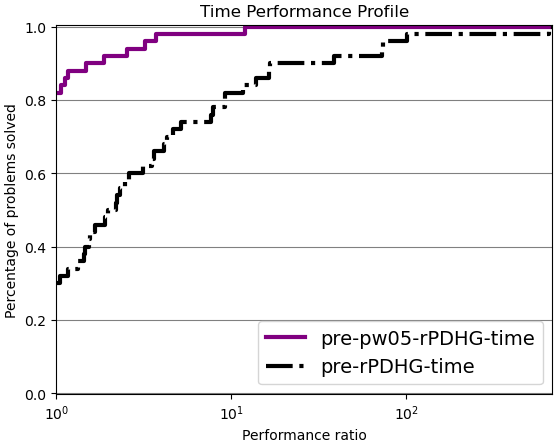}
\end{subfigure}
\hfill
\begin{subfigure}{0.42\linewidth}
\centering
\includegraphics[width=\linewidth]{./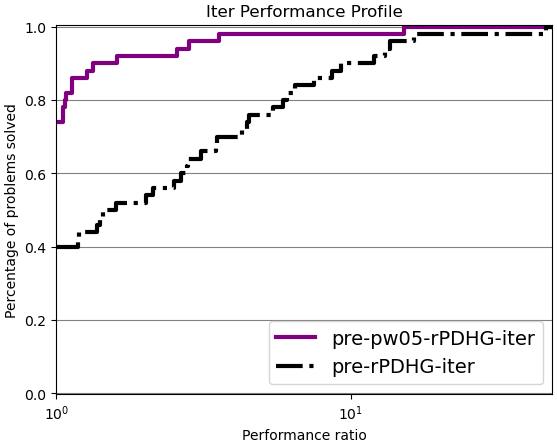}
\end{subfigure}
\caption{Time and iteration performance profiles of rPDHG with and without primal-weight tuning on 50 randomly selected instances. Left: time comparison. Right: iteration comparison.}
\label{fig:pwrPDHG}
\end{figure}

\begin{figure}[htb!]
\centering
\begin{subfigure}{0.42\linewidth}
\centering
\includegraphics[width=\linewidth]{./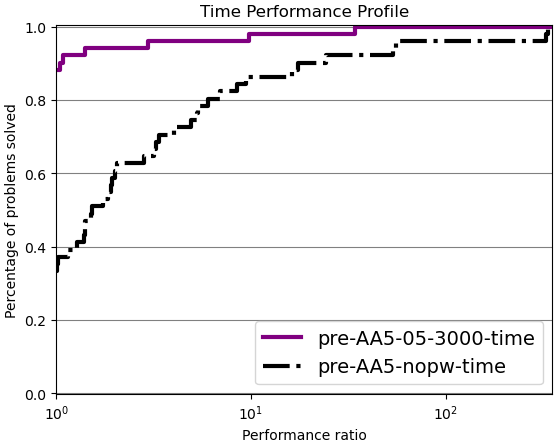}
\end{subfigure}
\hfill
\begin{subfigure}{0.42\linewidth}
\centering
\includegraphics[width=\linewidth]{./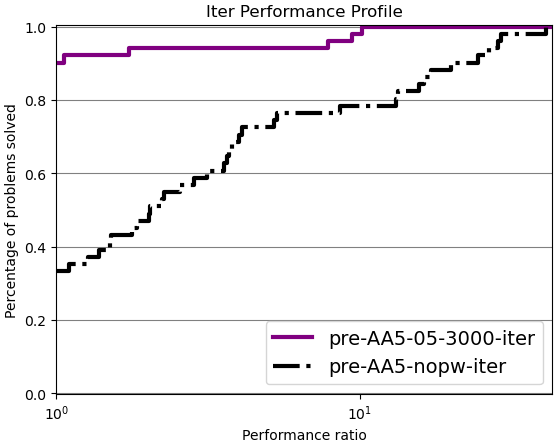}
\end{subfigure}
\caption{Time and iteration performance profiles of AA-PDHG with and without primal-weight tuning on 50 randomly selected instances. Left: time comparison. Right: iteration comparison.}
\label{fig:pwAA}
\end{figure}

 {Next, we present the computational results obtained using the primal weight  update for both rPDHG and AA-PDHG in Figure \ref{fig:pwrPDHG} and Figure \ref{fig:pwAA} (see Algorithm \ref{alg:smooting}). Here we follow the suggestion in \cite{google2022p} to select
the smoothing parameter to be $0.5$ for rPDHG and we use the same smoothing parameter choice in our implementation with the updating period $U_P=3000$. We see from the figures that though the primal weight update in our AA-PDHG framework is slightly different to that of rPDHG, it still provides a computational advantage.}   


 {Figure \ref{fig:AA-rPDHG-pre-whole} presents the performance profile comparison between AA-PDHG and rPDHG on the full dataset of 381 pre-solved instances.}  {Specifically, we compare there the performance of AA-PDHG and rPDHG both with and without primal-weight updates. Note that, when applying primal-weight updates ($U_p=3000$) in AA-PDHG, we also update $\widehat D$ according to the strategy described in Remark~\ref{re:pwform}. We observe that, on the full pre-solved dataset, AA-PDHG outperforms rPDHG overall. Specifically, when neither method uses primal weight updates, AA is the best-performing method on about 70\% of the instances. Although the performance gap becomes smaller once both methods use their respective primal weight update strategies, AA-PDHG remains consistently competitive and performs better on about 60\% of the instances. Moreover, after incorporating primal weight updates, AA shows a noticeable improvement on more challenging instances, as reflected by the tail of the performance profile becoming much closer to that of rPDHG.}
\begin{figure}[htb!]
    \centering
    \begin{subfigure}{0.4\linewidth}
        \centering
    \includegraphics[width=\linewidth]{./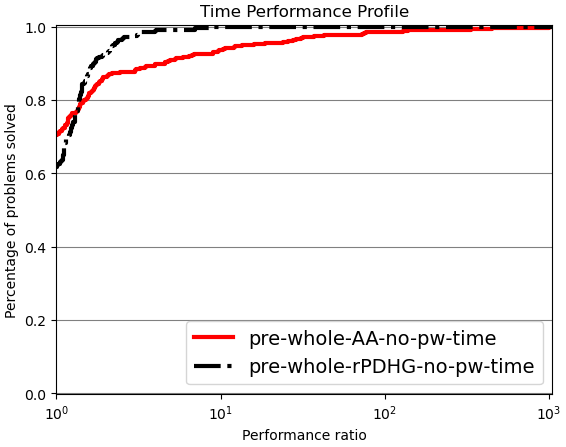}
    \end{subfigure}
    \hfill
    \begin{subfigure}{0.4\linewidth}
        \centering
    \includegraphics[width=\linewidth]{./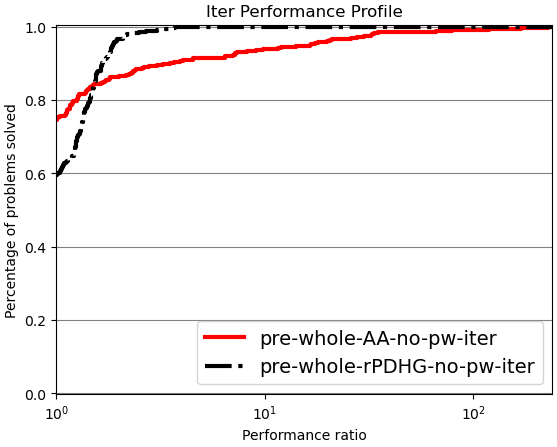}
    \end{subfigure}
    \hfill
    \begin{subfigure}{0.4\linewidth}
        \centering
    \includegraphics[width=\linewidth]{./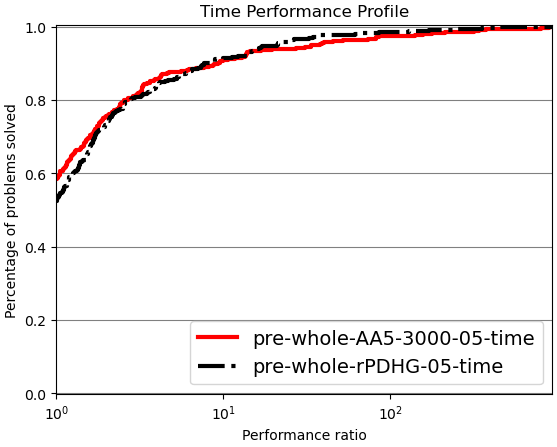}
    \end{subfigure}
    \hfill
    \begin{subfigure}{0.4\linewidth}
    \includegraphics[width=\linewidth]{./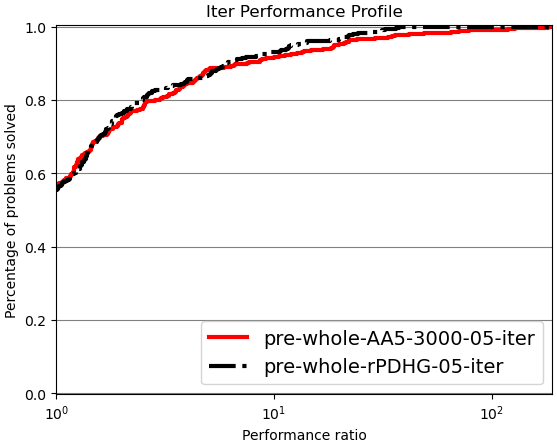}
    \end{subfigure}
        \caption{Time and iteration performance comparison between AA-PDHG and rPDHG on whole pre-solved dataset. Top: neither method uses primal weight updates. Bottom: both methods use their respective primal weight update strategies. Left: Time comparison. Right: Iteration comparison.}
    \label{fig:AA-rPDHG-pre-whole}
\end{figure}


 {To assess the robustness of these findings, we repeat the comparison on the unpresolved dataset. This allows us to assess whether the observed performance differences persist on the original problem instances. For brevity, we report only the final results in Figure~\ref{fig:unpreAA-rPDHG-whole_it}.}

\begin{figure}[htb!]
    \centering
    \begin{subfigure}{0.4\linewidth}
             \includegraphics[width=\linewidth]{./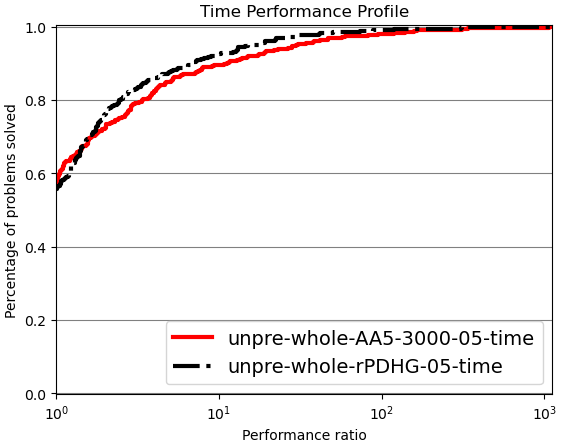}
    \end{subfigure}
    \hfill
    \begin{subfigure}{0.4\linewidth}
             \includegraphics[width=\linewidth]{./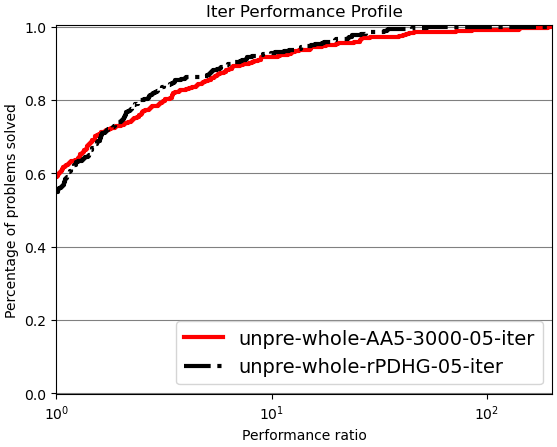}
    \end{subfigure}
      \caption{Performance comparison between AA-PDHG and rPDHG on the whole unpresolved dataset. \\
      Left: time comparison. Right: iteration comparison.}
   \label{fig:unpreAA-rPDHG-whole_it}
\end{figure}

 {
As shown in Figure~\ref{fig:unpreAA-rPDHG-whole_it}, the running-time profiles of the two methods are close on the unpresolved dataset, with AA-PDHG being slightly better on a subset of instances. The iteration-count profiles are also close, with AA-PDHG outperforming rPDHG on about $60$\% of the instances. Overall, this confirms that AA-PDHG remains competitive with rPDHG even without pre-solve.
}

\section{Conclusion}
 {In this work, we investigated whether Anderson Acceleration (AA) can serve as a viable alternative to the restart strategy for accelerating the Primal-Dual Hybrid Gradient (PDHG) method applied to linear programming (LP) problems. To this end, we reformulated PDHG as a fixed-point iteration and integrated AA into this framework, introducing an explicit projection step to maintain feasibility and a safeguard mechanism to guarantee global convergence. We further proposed FAA-PDHG, a filtered variant that enforces the uniform boundedness of the AA coefficient matrix through angle and length filtering, providing a rigorous convergence guarantee. While the filtering overhead makes FAA-PDHG less competitive in wall-clock time, it serves an 
	important theoretical role: it demonstrates that the boundedness assumption required 
	by Theorem~\ref{th:FAA-convergence} can be enforced algorithmically.
	  {This yields a two-tier design: FAA-PDHG provides the rigorous convergence guarantee, while the unfiltered AA-PDHG serves as the practical workhorse. The two variants are connected by the observation that the convergence analysis (Theorem~\ref{th:con}) depends only on $\|H^k\| \leq M$ and not on how this bound is achieved.}
In practice, the conditioning of $H_k$ may deteriorate as the memory parameter $m_A$ increases. For the moderate value used in our experiments, $m_A=5$, the unfiltered AA-PDHG variant was numerically stable and avoided the overhead of explicit filtering. This motivates its use as the practical workhorse in the numerical comparisons. Nevertheless, for unfiltered AA-PDHG the uniform boundedness of $H_k$ remains an assumption in Theorem \ref{th:con}; FAA-PDHG shows that this assumption can be enforced algorithmically when a fully rigorous variant is required.}

 {The numerical experiments on the MIPLIB dataset provide evidence supporting the viability of AA as an acceleration mechanism for PDHG. In comparison with vanilla PDHG, both AA-PDHG and FAA-PDHG deliver substantial speedups on selected medium- to large-scale LP instances.  {
More significantly, the direct comparison with restart PDHG (rPDHG) demonstrates that AA-PDHG achieves the fastest running time on approximately $70$\% of the pre-solved instances when neither method uses primal-weight updates. When both methods incorporate their respective primal-weight update strategies, AA-PDHG remains competitive and achieves better performance on about $60$\% of the instances.}
}
\section*{Acknowledgments}
The authors acknowledge the use of the IRIDIS High Performance Computing Facility and associated support services at the University of Southampton, in the completion of this work.

\bibliographystyle{plainnat}
\bibliography{references} 

\newpage
\section*{Supplementary Material}
\setcounter{section}{0}
\subsection{ {Explicit expression of {$\Lambda$} and Proof of Remark \ref{remark-lambda}}}  \label{appen:Lambda}

\begin{proof}{Proof}
We first show the expression of $\Lambda$. Let 
$\lambda^+$ and $\lambda^-$ be the corresponding optimal Lagrange multipliers
associated with the lower and upper bound constraints, respectively.
And
$
\lambda_i=\lambda_i^+-\lambda_i^-.$
The possible values of \(\lambda_i\) depend only on whether the lower and upper
bounds are finite.

If \(l_i=-\infty\) and \(u_i=\infty\), then there is no lower or upper bound
constraint. Hence there are no corresponding Lagrange multipliers, and we set $
    \lambda_i^+ = 0,\;\; \lambda_i^- = 0.$
Therefore,$
    \lambda_i = \lambda_i^+ - \lambda_i^- = 0,$
and hence
$$
    \Lambda_i=\{0\}.
$$

If \(l_i=-\infty\) and \(u_i\in\mathbb{R}\), then only the upper bound
constraint \(x_i\le u_i\) is present. Thus there is no lower-bound multiplier,
so \(\lambda_i^+=0\), while \(\lambda_i^-\ge 0\). Therefore,
$
    \lambda_i = \lambda_i^+ - \lambda_i^- = -\lambda_i^- \le 0,
$
and hence
$$
    \Lambda_i=\mathbb{R}_{-}.
$$

If \(l_i\in\mathbb{R}\) and \(u_i=\infty\), then only the lower bound
constraint \(x_i\ge l_i\) is present. Thus there is no upper-bound multiplier,
so \(\lambda_i^-=0\), while \(\lambda_i^+\ge 0\). Therefore,
$
    \lambda_i = \lambda_i^+ - \lambda_i^- = \lambda_i^+ \ge 0,
$
and hence
$$
    \Lambda_i=\mathbb{R}_{+}.
$$

If \(l_i\in\mathbb{R}\) and \(u_i\in\mathbb{R}\), then both lower and
upper bound constraints are present. Hence \(\lambda_i^+\ge 0\) and
\(\lambda_i^-\ge 0\), and
$$
    \lambda_i = \lambda_i^+ - \lambda_i^- \in \mathbb{R}=\Lambda_i.
$$

Consequently, we finally get,
\[
    \Lambda=\Lambda_1\times\cdots\times\Lambda_n,
\]
where
\[
\Lambda_i =
\begin{cases}
\{0\}, & l_i=-\infty,\ u_i=\infty,\\
\mathbb{R}_{-}, & l_i=-\infty,\ u_i\in\mathbb{R},\\
\mathbb{R}_{+}, & l_i\in\mathbb{R},\ u_i=\infty,\\
\mathbb{R}, & l_i\in\mathbb{R},\ u_i\in\mathbb{R}.
\end{cases}
\]

Now let $x$ be an optimal solution of the problem, then the
KKT conditions hold at $x$. In particular, the complementarity conditions give
$$
\lambda_i^+(x_i-l_i)=0\; \text{ if } l_i\in\mathbb{R},
\quad
\lambda_i^-(u_i-x_i)=0 \; \text{ if } u_i\in\mathbb{R}.
$$
Since we have
$
X=\times_{i=1}^n X_i,
$where $
X_i=\{x_i\in\mathbb{R}: l_i\le x_i\le u_i\},
$
 {and the subgradient of the indicator function $I_X(x)$ is its a normal cone, we have,
$
\partial \mathcal{I}_X(x)=\mathcal{N}_{X}(x),
$
where
\begin{align*}
   \mathcal{N}_{X}(x)=\mathcal{N}_{X_1}(x_1)\times \mathcal{N}_{X_2}(x_2)\times \ldots \times \mathcal{N}_{X_n}(x_n)=\partial \mathcal{I}_{X_1}(x_1)\times \ldots \times \partial \mathcal{I}_{X_n}(x_n),
\end{align*} see \cite{variational-Rock}[Proposition 6.41].}
Hence, it suffices to prove that for any optimal point $x_i$, the corresponding $\lambda_i$ would satisfy
$$
-\lambda_i\in \partial \mathcal{I}_{X_i}(x_i)
\quad \text{for each } i.
$$
In the following, we consider 4 scenarios.
\begin{enumerate}
 \item[(1)] If $l_i,u_i\in\mathbb{R}$, then
    \[
    \partial \mathcal{I}_{X_i}(x_i)=
    \begin{cases}
    \{0\}, & l_i<x_i<u_i,\\
    \mathbb{R}_-, & x_i=l_i<u_i,\\
    \mathbb{R}_+, & l_i<x_i=u_i,\\
    \mathbb{R}, & x_i=l_i=u_i.
    \end{cases}
    \]
Consider as follows, 
if \(l_i<x_i<u_i\), then complementarity gives
$\lambda_i^+=0,\;\lambda_i^-=0$, so $-\lambda_i=0\in\{0\}=\partial \mathcal{I}_{X_i}(x_i)$.

If \(x_i=l_i<u_i\), then the upper bound is inactive, hence \(\lambda_i^-=0\), so $ -\lambda_i=-(\lambda_i^+-\lambda_i^-)= -\lambda_i^+ \le 0$,
which implies
$-\lambda_i\in \mathbb{R}_-=\partial \mathcal{I}_{X_i}(x_i).$

If \(l_i<x_i=u_i\), then the lower bound is inactive, hence \(\lambda_i^+=0\), so $-\lambda_i=-(\lambda_i^+-\lambda_i^-)=\lambda_i^- \ge 0,$ which implies
$
-\lambda_i\in \mathbb{R}_+=\partial \mathcal{I}_{X_i}(x_i).
$

If \(x_i=l_i=u_i\), then
$
\partial \mathcal{I}_{X_i}(x_i)=\mathbb{R},
$
so \(-\lambda_i\in \partial \mathcal{I}_{X_i}(x_i)\).

\item[(2)] If $l_i=-\infty$ and $u_i=+\infty$, then $X_i=\mathbb{R}$, so 
$\partial \mathcal{I}_{X_i}(x_i)=\{0\}.$
In this case $\lambda_i=0$, and thus
$-\lambda_i=0\in \partial \mathcal{I}_{X_i}(x_i).$

\item[(3)] If $l_i=-\infty$ and $u_i\in\mathbb{R}$, then
$$\partial \mathcal{I}_{X_i}(x_i)=
\begin{cases}
\{0\}, & x_i<u_i,\\
\mathbb{R}_+, & x_i=u_i.
\end{cases}
$$
Since $\lambda_i=-\lambda_i^-$ with $\lambda_i^-\ge 0$, we have $-\lambda_i\geq 0$. Moreover, complementarity implies $\lambda_i^-=0$ if $x_i<u_i$. Therefore,
$
    -\lambda_i\in \partial \mathcal{I}_{X_i}(x_i).$

\item[(4)] If $l_i\in\mathbb{R}$ and $u_i=+\infty$, then
\[
\partial \mathcal{I}_{X_i}(x_i)=
    \begin{cases}
    \{0\}, & x_i>l_i,\\
    \mathbb{R}_-, & x_i=l_i.
    \end{cases}
    \]
    Since $\lambda_i=\lambda_i^+$ with $\lambda_i^+\ge 0$, we have $-\lambda_i\le 0$. Moreover, complementarity implies $\lambda_i^+=0$ if $x_i>l_i$. Therefore,$
    -\lambda_i\in \partial \mathcal{I}_{X_i}(x_i).$

\end{enumerate}

Hence, for every $i$, we have $-\lambda_i\in \partial \mathcal{I}_{X_i}(x_i)$, and therefore,
$
-\lambda\in \partial \mathcal{I}_X(x).
$

\end{proof}

\subsection{Proof of Eq.~\eqref{lag-mini}, i.e.,
 $\mathcal{I}_{Q}(Kx)=\mathcal{I}^*_{Y}(-Kx+q)$}
\label{appen:IQKX}
\begin{proof}{Proof}
The definition of $\mathcal{I}^*_{Y}(-Kx+q) $, yields,
\begin{align*}
     \mathcal{I}^*_{Y}(-Kx+q)=&\sup_{y\in\mathbb{R}^m
     }\
     \Big\{\langle -Kx+q,y\rangle -\mathcal{I}_{Y}(y)\Big\}=\sup_{y\in Y}\Big\{\langle -Kx+q,y\rangle\Big\}\\
     =&\sup_{y\in Y} \sum_{i=1}^{m_1+m_2} (-Kx_i+q_i)^{\top}y_i.
 \end{align*}
Recall that $Y:=\mathbb{R}_+^{m_1} \times \mathbb{R}^{m_2}$, which means the dual variable \( y \) has  non-negativity constraints on its first \( m_1 \) components, and is unconstrained on the remaining \( m_2 \) components. Analyzing the summation in the $\sup$ component-wise:
\begin{itemize}
    \item[(a)] For index $i\in [1,m_1]$, where $y_i\geq 0$,
    \begin{itemize}
        \item[(1)] If $-Kx_i+q_i<0$, then we would get $\sup_{y\in Y}\sum_{i} (-Kx_i+q_i)^{\top}y_i=0.$
        \item[(2)] If $-Kx_i+q_i=0$, then we would get $\sup_{y\in Y}\sum_{i} (-Kx_i+q_i)^{\top}y_i=0.$
        \item[(3)] If $-Kx_i+q_i>0$, then we would get $\sup_{y\in Y}\sum_{i} (-Kx_i+q_i)^{\top}y_i=+\infty.$
    \end{itemize}
    
    \item[(b)] For index $i\in [m_1+1,m_1+m_2]$, where $y_i$ is free,
    \begin{itemize}
        \item[(1)] If $-Kx_i+q_i<0$, then we would get $\sup_{y\in Y}\sum_{i} (-Kx_i+q_i)^{\top}y_i=+\infty.$
        \item[(2)] If $-Kx_i+q_i=0$, then we would get $\sup_{y\in Y}\sum_{i} (-Kx_i+q_i)^{\top}y_i=0.$
        \item[(3)] If $-Kx_i+q_i>0$, then we would get $\sup_{y\in Y}\sum_{i} (-Kx_i+q_i)^{\top}y_i=+\infty.$
    \end{itemize}
\end{itemize}
In summary, for all $i\in [1,m_1+m_2]$, when $-Kx_i+q_i=0$ or when $-Kx_i+q_i<0$ for $i\in [1:m_1]$, the supremum of $\sum_{i} (-Kx_i+q_i)^{\top}y_i=0$, otherwise,  $\sup_{y\in Y}\sum_{i} (-Kx_i+q_i)^{\top}y_i=+\infty$. We obtained, hence, 
\[
\mathcal{I}^*_{Y}(-Kx + q) =
\begin{cases}
0, & \text{if } (Kx)_i \geq q_i \text{ for } i = 1, \dots, m_1 \ \\&\text{ and } (Kx)_i = q_i \text{ for } i = m_1+1,\dots, m_1+m_2 , \\
+\infty, & \text{otherwise}
\end{cases}
=: \mathcal{I}_Q(Kx),
\]
where $Q$ is the set $$Q:=\{Kx\mid (Kx)_i=q_i \text{ for all } i = m_1+1,\dots,m_1+m_2,\;(Kx)_i \geq  q_i \text{ for } i = 1, \dots, m_1 \}.$$
\end{proof}

\subsection{Proof of Theorem \ref{th:pdhg-residual-0}}\label{appen:fix-res}
\begin{proof}{Proof}
    From \cite[Remark 2.7]{degenerate2021}, we know that if $\ttt$ is $\M$-firmly non-expansive, then the operator $\mathcal{R}:=2\ttt-\mathcal{I}$ is $\M$ non-expansive, and $Fix\mathcal{R}=Fix\ttt$. 
Hence from the iteration $ u^{k+1}=\ttt (u^{k})$, we have \begin{align*}
    u^{k+1}=\ttt (u^{k})=\frac{\mathcal{R}+\mathcal{I}}{2}u^{k}.
\end{align*}
and   {hence}
\begin{align*}
    u^{k+1}=u^{k} + \frac{1}{2}(\mathcal{R}(u^{k})-u^{k}).
\end{align*}
Assume $u^*$ is a fixed-point of $\ttt$, then $u^*\in Fix\ttt=Fix\mathcal{R}$. We have hence,
\begin{align}
    \|u^{k+1}-u^*\|_\M^2&=\|\frac{1}{2}(u^{k}-u^*) + \frac{1}{2}(\mathcal{R}(u^{k})-u^*)\|_\M^2\nonumber
    \\
    &=\frac{1}{2}\|u^{k}-u^*\|_\M^2+\frac{1}{2}\|\mathcal{R}(u^{k})-u^*\|_\M^2-\frac{1}{4}\|u^{k}-\mathcal{R}(u^{k})\|^2_\M\nonumber\\
    &=\frac{1}{2}\|u^{k}-u^*\|_\M^2+\frac{1}{2}\|\mathcal{R}(u^{k})-\mathcal{R}(u^*)\|_\M^2-\frac{1}{4}\|u^{k}-\mathcal{R}(u^{k})\|_\M^2\nonumber\\
    &\leq \|u^{k}-u^*\|^2_\M-\frac{1}{4}\|\mathcal{R}(u^{k})-u^{k}\|^2_\M\label{ineq:summble-p},
\end{align}
where the second equality holds due to the fact that for all $\alpha\in \mathbb{R}$ and for any $u_1,\;u_2\in\mathbb{R}^n$, $$\alpha(1-\alpha)\|u_1-u_2\|_\M^2+\|\alpha u_1+(1-\alpha)u_2\|_\M^2=\alpha \|u_1\|_\M^2+(1-\alpha)\|u_2\|_\M^2,$$ whereas the last inequality holds because of the non-expansive property of $\mathcal{R}$. From \eqref{ineq:summble-p}, we have
\begin{align*}
    \sum_{k=0}^{+\infty}\frac{1}{4}\|\mathcal{R}(u^{k})-u^{k}\|_\M^2\leq \| u^{0}-u^*\|_\M^2,
\end{align*}
and hence we have $\lim_{k\to +\infty} \|\mathcal{R}(u^{k})-u^{k}\|_\M=0.$ From the definition of $\mathcal{R}$, we get $$\lim_{k\to \infty} \|\ttt( u^{k})- u^{k}\|_\M=0,$$ and the thesis follows by using the equivalence of norms in finite dimension and the fact that $\M$ is supposed to be positive definite.
\end{proof}

\subsection{Proof of Lemma \ref{le-AA-use-inifite}}\label{appen:AA-infi}
\begin{proof}{Proof}
     Assume that AA is used only a finite number of times, i.e., there exists a finite integer $\bar{A}$, such that $i^k\leq \bar{A}$ for all $k\in \mathbb{N}$. 
    Then, there exists $\bar{K}\geq \bar{A}$ s.t. for all $k > \bar{K}$, we only use the update at Line~\ref{al-update-PDHG-uk+1} in Algorithm \ref{al-PD-safe}. Applying now Theorem \ref{th:pdhg-residual-0}, 
    we have $\lim_{k\to\infty}\|{g}^k\|= \lim_{k\to\infty}\|\ttt (u^{k}) - u^{k}\|=0$. Hence, $\forall \; \delta>0$, there exists an index $K> \bar{K}$ such that for all $k\geq K$, $\|{g}^k\|\leq\delta$. Letting 
     $$ \delta \leq D \|g^0\| (i^{{k}}+1)^{-(1+\varepsilon)}\leq D \|g^0\| (\bar{A}+1)^{-(1+\varepsilon)},$$
      we have for all ${k}\geq K$, $\|g^k\|\leq\delta\leq D \|g^0\| ({\bar{A}+1})^{-(1+\varepsilon)}$, i.e., that the safeguard condition is satisfied. This contradicts our assumption, as it implies that the safeguard check would be successful one more time.
 \end{proof}

\subsection{Proof of Lemma \ref{le-quasi-fejer}}\label{appen:AA-fejer}

\begin{proof}{Proof}
{{{We divide the discussion into two cases.
Firstly, assume the $k$-th iteration is an AA step, i.e., $k=a_{l}\in K_{aa}$. We have
\begin{equation}
\begin{aligned}\label{eq:Kaa-AA-used}
    \|u^{k+1}-u^*\|_{\M}\leq &\| u^{k} - u^* \|_{\M} + \| \p (u^{k}-H^kg^k)-u^{k}\|_{\M}\\
    \leq &\|u^k - u^*\|_{\M} + 
      {\sqrt{\lambda_{\max}(\M)}\| \p (u^{k}-H^kg^k)-u^{k}\|}\\
     \leq &\|u^k - u^*\|_{\M} +{  { \sqrt{\lambda_{\max}(\M)}}}\|H^kg^k\|\\
     \leq &\|u^k - u^*\|_{\M} +{  { \sqrt{\lambda_{\max}(\M)}}}M\|g^k\|
     \\
    \leq & \|u^k - u^*\|_{\M} + M{  { \sqrt{\lambda_{\max}(\M)}}D}\|g^0\|(i^k+1)^{-(1+\varepsilon)},
\end{aligned}
\end{equation}
where the second inequality follows by the non-expansiveness of the projection, whereas the last inequality holds due to the safeguard strategy in Algorithm \ref{al-PD-safe},  {and $\lambda_{\max}(\M)$ is the biggest eigenvalue of $\M$}.

Analogously, if $k=p_{l}\in K_{pd}$, we have,
\begin{equation}
\begin{aligned}\label{eq:kpd_PDHG-used-1}
     \|u^{k+1}-   u^*\|^2_\M&=\|\ttt (u^{k})-\ttt (u^{*})\|^2_\M \leq \|  u^{k}-u^*\|^2_\M - \|g^{k}\|^2_\M,
\end{aligned}
\end{equation}
where the first inequality holds because \( \mathcal{T} \) is \( \mathcal{M} \)-firmly non-expansive and applying \cite[Proposition 4.25(iii)]{convex-monotone} with \( \alpha = \frac{1}{2} \).  Then we have, 
\begin{equation}
\begin{aligned}\label{pu-bound-1}
&\|u^{k+1}- u^*\|_\M - \|u^{0}- u^*\|_\M = \sum_{j=0}^k\big(\|u^{j+1}- u^*\|_\M - \|u^{j}- u^*\|_\M\big)  \\
&\quad= \sum_{\substack{j \in K_{aa}\\ j \le k}}\!\big(\|u^{j+1}- u^*\|_\M - \|u^{j}- u^*\|_\M\big)
     + \sum_{\substack{j \in K_{ {pd}}\\ j \le k}}\!\big(\|u^{j+1}- u^*\|_\M - \|u^{j}- u^*\|_\M\big) \\
&\quad\le \sum_{\substack{j \in K_{aa}\\ j \le k}}\!\big(\|u^{j+1}- u^*\|_\M - \|u^{j}- u^*\|_\M\big)  \\
&\quad\le M{  { \sqrt{\lambda_{\max}(\M)}}} D\|g^0\|\sum_{j \in K_{aa}}^k (i^{j}+1)^{-(1+\varepsilon)} \\
&\quad\le M{  { \sqrt{\lambda_{\max}(\M)}}} D\|g^0\|\sum_{j=0}^{\infty} (j+1)^{-(1+\varepsilon)} < +\infty .
\end{aligned}
\end{equation}
where the first inequality holds from \eqref{eq:kpd_PDHG-used-1} and the second inequality holds from \eqref{eq:Kaa-AA-used}.
Hence, we get $\{\| u^{k}- u^*\|_\M\}$ is bounded, and we denote this bound by \[
    E:=\| u^{0}- u^*\|_\M + M{{  { \sqrt{\lambda_{\max}(\M)}}} }D\|g^0\|\sum_{j=0}^{\infty}(j+1)^{-(1+\varepsilon)},\]
i.e., $\| u^{k}- u^*\|_\M\leq E$ for all $k \geq 0$. Clearly, this implies that $\{\|u^k-u^*\|\}$ is also bounded. 

Squaring both sides of equation \eqref{eq:Kaa-AA-used}, we have
\begin{equation}
\begin{aligned}\label{uk-sequence-diff-bound}
   & \| u^{{k+1}}- u^*\|^2_\M- \| u^{k}- u^*\|^2_\M \\
   &\leq 2\| u^{k}- u^*\|_\M M{{  { \sqrt{\lambda_{\max}(\M)}}} }D\|g^0\|(i^k+1)^{-(1+\varepsilon)}+M^2D^2{\lambda_{\max}(\M) }\|g^0\|^2(i^k+1)^{-2(1+\varepsilon)}\\
    &\leq 2EM{  { \sqrt{\lambda_{\max}(\M)}}} D\|g^0\|(i^k+1)^{-(1+\varepsilon)}+M^2D^2{\lambda_{\max}(\M) }\|g^0\|^2(i^k+1)^{-2(1+\varepsilon)}.
\end{aligned}
\end{equation}
Let us now   {define} the sequence
\begin{align*}
\varepsilon^{k} =
\begin{cases}
\begin{aligned}
&2EM{  { \sqrt{\lambda_{\max}(\M)}}}D\|g^0\|(i^k+1)^{-(1+\varepsilon)} \\
&\quad + M^2D^2\lambda_{\max}(\mathcal{M}) \|g^0\|^2(i^k+1)^{-2(1+\varepsilon)},
\end{aligned}
& k \in K_{aa},\\
0, & k \notin K_{aa}.
\end{cases}
\end{align*}

Using  \eqref{eq:kpd_PDHG-used-1} and \eqref{uk-sequence-diff-bound}, by the definition of $\varepsilon^k$, it holds $ \| u^{{k+1}}- u^*\|^2_\M\leq \| u^{k}- u^*\|^2_\M+\varepsilon^{k}$ for all $k\geq 0$. Moreover, we have
\begin{equation}
\begin{aligned}
&\sum_{j=0} \varepsilon^{j}=\sum_{j\in   {K_{pd}}} \varepsilon^{j}+\sum_{j\in K_{aa}}\varepsilon^{j}\\
=&\sum_{{j}\in K_{aa}} \Big(2EM{  { \sqrt{\lambda_{\max}(\M)}}} D\|g^0\|(i^j+1)^{-(1+\varepsilon)}+M^2D^2{\lambda_{\max}(\M) }\|g^0\|^2(i^j+1)^{-2(1+\varepsilon)}\Big)\\
\leq &\sum_{j=0}^{\infty}\Big(2EM{  { \sqrt{\lambda_{\max}(\M)}}} D\|g^0\|(j+1)^{-(1+\varepsilon)}+M^2D^2{\lambda_{\max}(\M) }\|g^0\|^2(j+1)^{-2(1+\varepsilon)}\Big)<+\infty,
\end{aligned}
\end{equation}
where the second equality holds due to the definition of $\varepsilon^{k}$. Hence, the $\{\varepsilon^k\}$ is a summable sequence, which proves, looking at Definition~\ref{def:quasi-fejer}, that $\{ u^k\}$ is a $\M$-quasi-Fejér monotone sequence. 
}}}
\end{proof}

\subsection{ {Hyperparameter Selection for AA-PDHG}}\label{appen:hyper}

 {This section describes the systematic hyperparameter selection procedure for AA-PDHG. To avoid overfitting the parameter choices to a specific problem structure, we conduct all tuning experiments on a representative subset of 50 randomly selected pre-solved instances from the MIPLIB 2017 dataset. The selection follows a sequential strategy: each parameter is tuned in turn while keeping the previously selected values fixed. At each step, the candidate configurations are evaluated using performance profiles \cite{ASPerprof2016}, which provide a comprehensive comparison in terms of both efficiency (the fraction of problems for which a solver is fastest) and robustness (the fraction of problems solved within a given factor of the best solver's time/iterations). We report performance profiles for both running time and iteration count.}

\medskip\noindent\textbf{1.  {Memory size $m_A$.}}  {The AA memory size $m_A$ determines how many past iterates are used to construct the acceleration step. A larger $m_A$ provides a richer Quasi-Newton approximations but increases the per-iteration cost of the AA least-squares subproblem and the storage overhead. We compare $m_A \in \{3, 5, 7\}$; the corresponding time and iteration performance profiles are reported in Figures~\ref{fig:mk35} and~\ref{fig:itmk35}, where pre-AA3, pre-AA5, and pre-AA7 denote AA-PDHG with $m_A = 3, 5, 7$, respectively.}

      {The time performance profiles in Figure~\ref{fig:mk35} show that $m_A = 5$ achieves the best trade-off between efficiency and robustness: it is the fastest solver on the largest fraction of instances while maintaining competitive robustness. Although $m_A = 7$ yields fewer iterations overall, as shown in Figure~\ref{fig:itmk35}, the increased per-iteration cost of the larger AA subproblem offsets this gain in terms of wall-clock time. Conversely, $m_A = 3$ provides insufficient historical information to achieve effective acceleration on many instances. Based on these observations, we fix $m_A = 5$ for all subsequent experiments.}

\begin{figure}[htb!]
    \centering
    \begin{subfigure}{0.3\linewidth}
        \centering
        \includegraphics[width=\linewidth]{./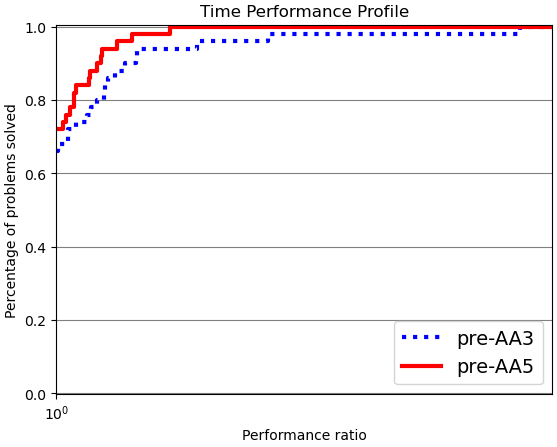}
        \caption{Compare $m_A =3$, $m_A =5$}
    \end{subfigure}
    \hfill
    \begin{subfigure}{0.3\linewidth}
        \centering
        \includegraphics[width=\linewidth]{./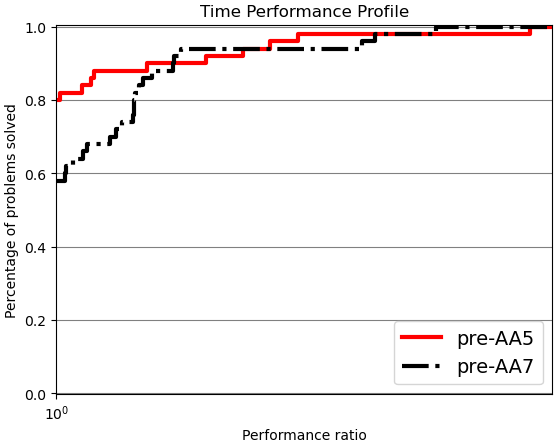}
         \caption{Compare $m_A =5$, $m_A =7$}
    \end{subfigure}
    \hfill
    \begin{subfigure}{0.3\linewidth}
        \centering
        \includegraphics[width=\linewidth]{./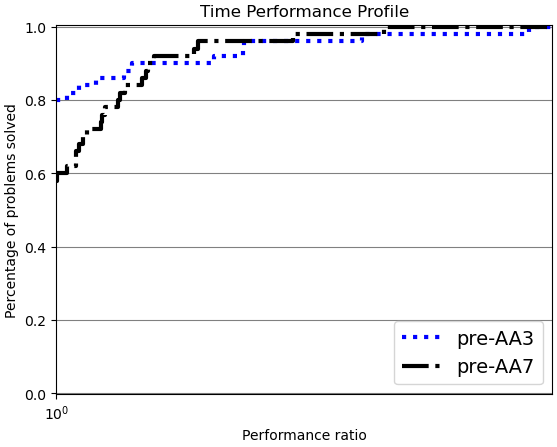}
         \caption{Compare $m_A =3$, $m_A =7$}
    \end{subfigure}
    \caption{ {Time performance profiles for AA-PDHG with different memory sizes $m_A \in \{3, 5, 7\}$.}}
    \label{fig:mk35}
\end{figure}

\begin{figure}[htb!]
    \centering
    \begin{subfigure}{0.3\linewidth}
        \centering
        \includegraphics[width=\linewidth]{./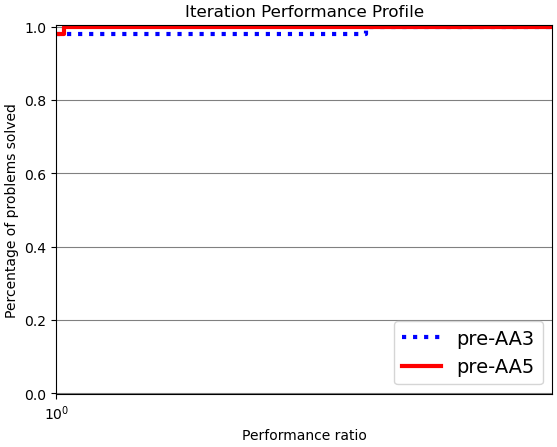}
        \caption{Compare $m_A =3$, $m_A =5$}
    \end{subfigure}
    \hfill
    \begin{subfigure}{0.3\linewidth}
        \centering
    \includegraphics[width=\linewidth]{./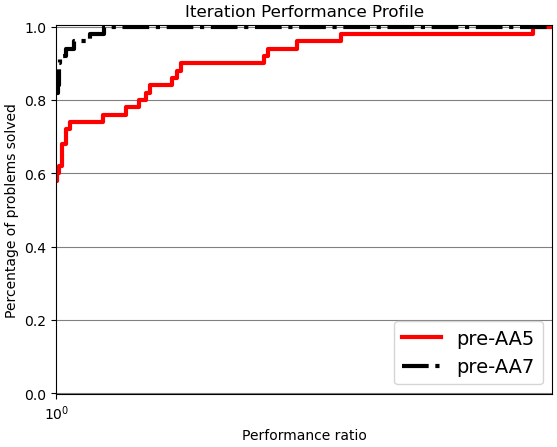}
         \caption{Compare $m_A =5$, $m_A =7$}
    \end{subfigure}
    \hfill
    \begin{subfigure}{0.3\linewidth}
        \centering
           \includegraphics[width=\linewidth]{./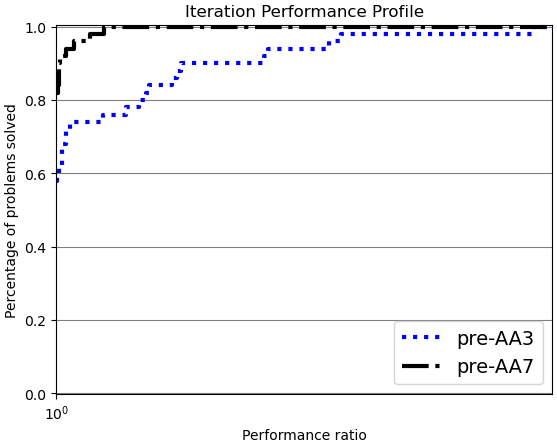}
         \caption{Compare $m_A =3$, $m_A =7$}
    \end{subfigure}
    \caption{ {Iteration performance profiles for AA-PDHG with different memory sizes $m_A \in \{3, 5, 7\}$.}}
    \label{fig:itmk35}
\end{figure}

\medskip\noindent\textbf{2.  {Regularisation parameter $\eta$.}}  {The Tikhonov regularisation parameter $\eta$ in Line~\ref{algline:AA_update} of Algorithm~\ref{al-PD-safe} controls the conditioning of the AA least-squares subproblem. Larger values of $\eta$ improve numerical stability at the cost of reduced acceleration, while smaller values allow more aggressive acceleration but may lead to ill-conditioned updates. We evaluate three candidate values, $\eta \in \{10^{-6}, 10^{-8}, 10^{-10}\}$, with $m_A = 5$ fixed from the previous step. The time and iteration performance profiles are reported in Figures~\ref{fig:lambda_1e810} and~\ref{fig:lambda_1e810_it}.}

\begin{figure}[htb!]
    \centering
    \begin{subfigure}{0.3\linewidth}
        \centering
         \includegraphics[width=\linewidth]{./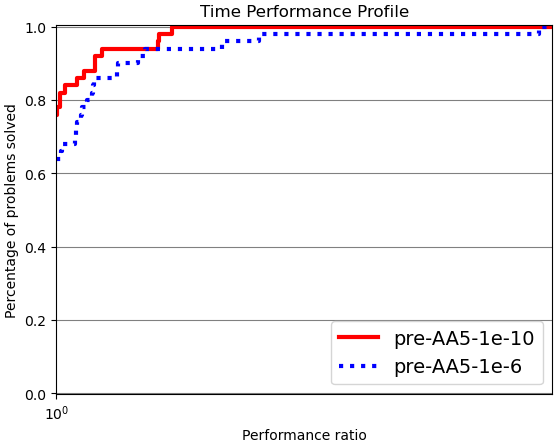}
        \caption{$\eta =10^{-6}$ vs $\eta = 10^{-8}$}
    \end{subfigure}
    \hfill
    \begin{subfigure}{0.3\linewidth}
     \includegraphics[width=\linewidth]{./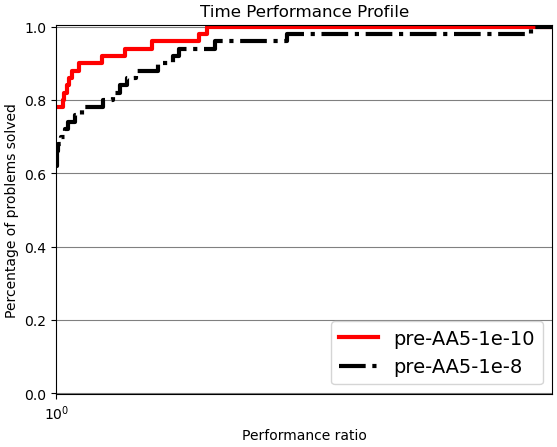}
        \caption{$\eta =10^{-10}$ vs $\eta = 10^{-8}$}
    \end{subfigure}
    \hfill
    \begin{subfigure}{0.3\linewidth}
     \includegraphics[width=\linewidth]{./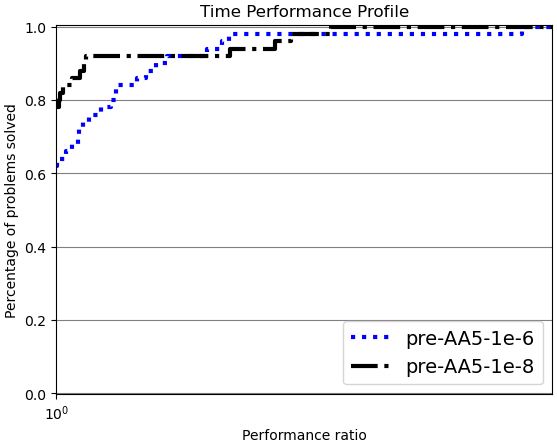}
        \caption{$\eta =10^{-10}$ vs $\eta = 10^{-6}$}
    \end{subfigure}
    \caption{  {Time performance profiles for AA-PDHG ($m_A = 5$) with different regularisation parameters $\eta \in \{10^{-6}, 10^{-8}, 10^{-10}\}$.}}
    \label{fig:lambda_1e810}
\end{figure}

\begin{figure}[htb!]
    \centering
    \begin{subfigure}{0.3\linewidth}
        \centering
         \includegraphics[width=\linewidth]{./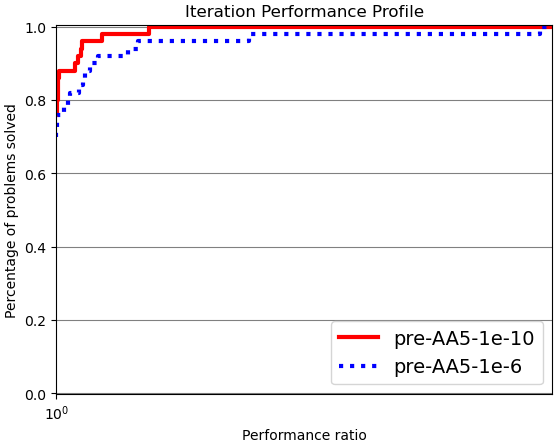}
        \caption{$\eta =10^{-10}$  vs $\eta = 10^{-6}$}
    \end{subfigure}
    \hfill
    \begin{subfigure}{0.3\linewidth}
     \includegraphics[width=\linewidth]{./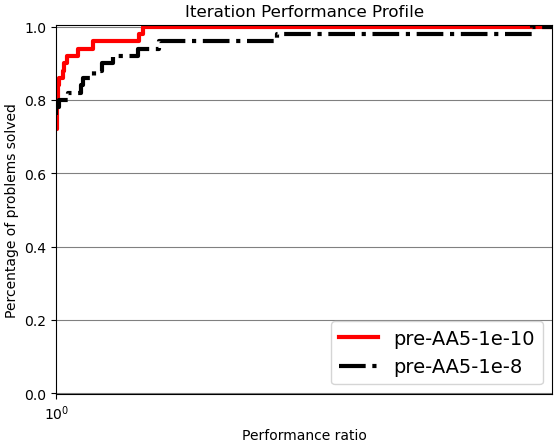}
        \caption{$\eta =10^{-10}$  vs $\eta = 10^{-8}$}
    \end{subfigure}
    \hfill
    \begin{subfigure}{0.3\linewidth}
     \includegraphics[width=\linewidth]{./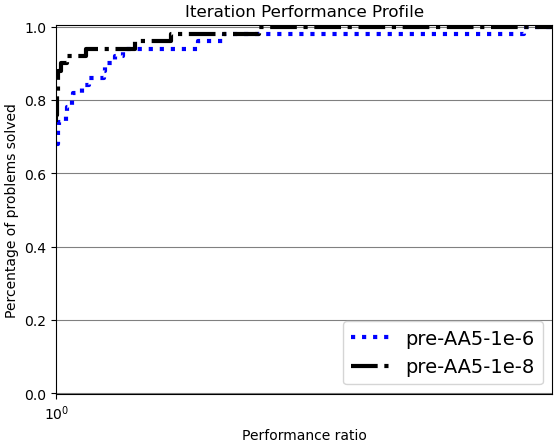}
        \caption{$\eta =10^{-6}$ vs $\eta = 10^{-8}$}
    \end{subfigure}
    \caption{  {Iteration performance profiles for AA-PDHG ($m_A = 5$) with different regularisation parameters $\eta \in \{10^{-6}, 10^{-8}, 10^{-10}\}$.}}
    \label{fig:lambda_1e810_it}
\end{figure}
 {The time performance profiles in Figure~\ref{fig:lambda_1e810} reveal a trade-off between efficiency and robustness: $\eta = 10^{-8}$ achieves the highest efficiency (it is the fastest solver on the largest fraction of instances), whereas $\eta = 10^{-10}$ exhibits superior robustness (it solves the largest fraction of instances within a moderate performance ratio). The iteration performance profiles in Figure~\ref{fig:lambda_1e810_it} provide a complementary perspective: $\eta = 10^{-6}$ requires the fewest iterations on many instances, indicating effective acceleration, but the stronger regularisation leads to higher wall-clock time per iteration.  {Since $\eta=10^{-10}$ exhibit better runtime performance, we therefore adopt this value in the subsequent experiments.}
}

\medskip\noindent\textbf{3.  {Primal weight update strategy.}}  {The primal weight update technique introduced in \cite{google2022p} dynamically rescales the primal and dual step sizes to achieve scale invariance, and has been shown to significantly improve practical convergence. However, in the AA framework, frequent updates to the primal weights alter the underlying fixed-point mapping $\mathcal{T}$, which can destabilise the difference matrices $\Delta\mathcal{U}^{k-m_k}$ and $\Delta\mathcal{G}^{k-m_k}$ and consequently degrade the acceleration effect. To balance the benefits of scale invariance against the stability requirements of AA, we adopt a periodic update strategy and evaluate two update frequencies: every $3{,}000$ and every $5{,}000$ iterations.  {We also compare two smoothing parameter values, $\theta \in \{0.1, 0.5\}$ (where $\theta = 0.5$ is the default in \cite{google2022p}, and we choose $\theta=0.1$ to examine whether this choice is better suited to AA),  and fix $\eta = 10^{-10}$. The experiment is conducted on the full pre-solved dataset. The time and iteration performance profiles are reported in Figures~\ref{fig:pwAA-2050}.}}

 \begin{figure}[htb!]
\centering

\begin{subfigure}[t]{0.4\textwidth}
\centering
\includegraphics[width=\linewidth]{./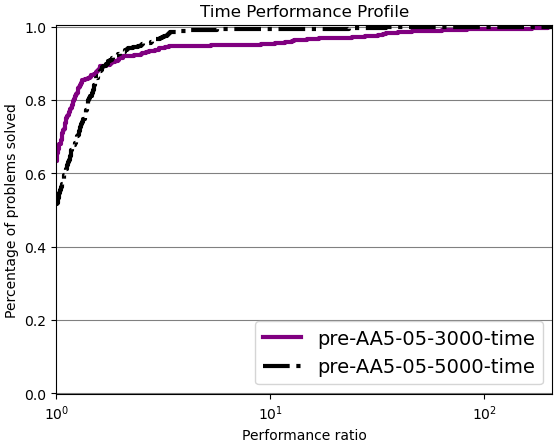}
\caption{Time comparison for different update periods..}
\end{subfigure}
\hspace{0.02\textwidth}
\begin{subfigure}[t]{0.4\textwidth}
\centering
\includegraphics[width=\linewidth]{./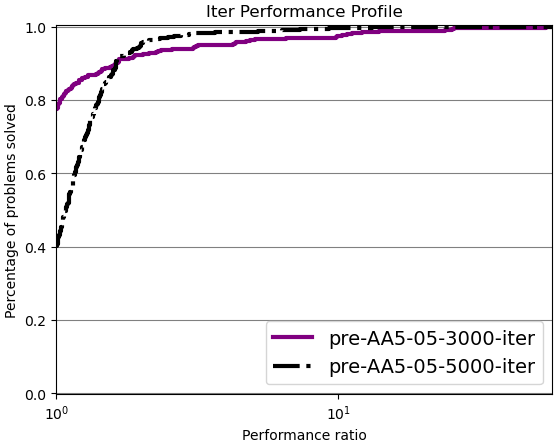}
\caption{Iteration comparison for different update periods.}
\end{subfigure}
\hspace{0.02\textwidth}
\begin{subfigure}[t]{0.4\textwidth}
\centering
\includegraphics[width=\linewidth]{./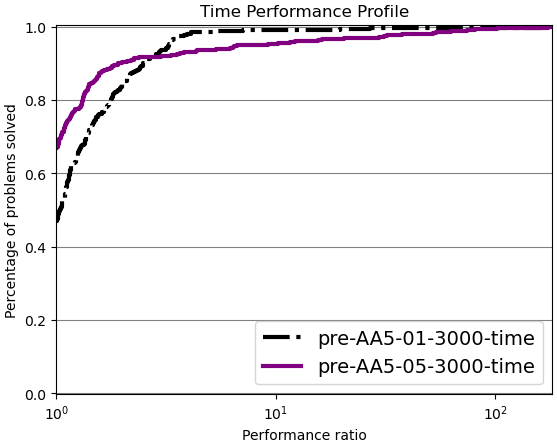}
\caption{Time comparison for different primal-weight smoothing parameters.}
\end{subfigure}
\hspace{0.02\textwidth}
\begin{subfigure}[t]{0.4\textwidth}
\centering
\includegraphics[width=\linewidth]{./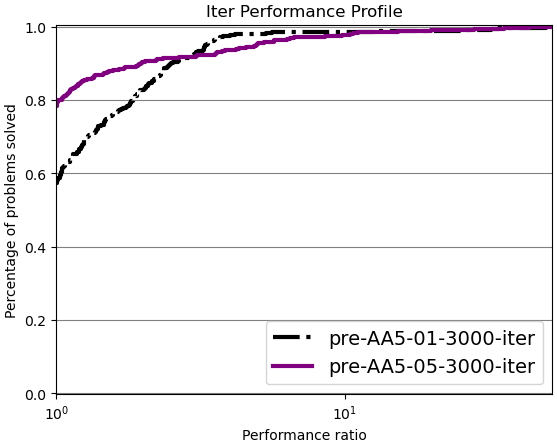}
\caption{Iteration comparison for different primal-weight smoothing parameters.}
\end{subfigure}

\caption{ {Time and iteration performance profiles for AA-PDHG ($m_A = 5$) under different primal weight update strategies. For all methods, we select $\eta=10^{-10}$.}
}
\label{fig:pwAA-2050}

\end{figure}




 {The results in Figure~\ref{fig:pwAA-2050} lead to the following observations. First, Subfigures~(a) and~(b) compare the different update periods under the same smoothing parameter $\theta=0.5$. 
In these cases, updating the primal weights every $3{,}000$ iterations yields superior performance, suggesting that moderately higher update frequency helps capture the evolving problem scaling. Second, Subfigures~(c) and (d) compare the smoothing parameters $\theta = 0.1$ and $\theta = 0.5$. The result shows that  $\theta = 0.5$ in the current AA primal-weight update appears to provide a better balance between the primal and dual progress.
} 
 {Based on the above analysis, the final hyperparameter configuration for AA-PDHG is: memory size $m_A = 5$, regularisation parameter $\eta = 10^{-10}$, smoothing parameter $\theta = 0.5$, and primal weight update every $3{,}000$ iterations. This configuration is used in all comparisons reported in Section~\ref{se:rpdhg_comparison}.}

\medskip\noindent\textbf{4.  {Update $\widehat{D}$.}} 
  {In Remark~\ref{re:pwform}, we also study the role of updating $\widehat{D}$ in the algorithm, carrying out additional experiments. Specifically, when AA-PDHG performs periodic primal-weight updates, we also update $\widehat{D}$ as described in Remark~\ref{re:pwform}. This extra step is inserted after Step~\ref{alstep:resetAA} of Algorithm~\ref{alg:smooting}. The figures below compare the performance on the full pre-solved dataset under different strategy of updating $\widehat{D}$. Here, unless explicitly labeled as \texttt{no-div}, the AA safeguard adopts $\mathrm{Iter}_{\mathrm{AA}}/c$ with $c=3$ to further control the acceptance frequency of AA steps. We see that, with the choice of updating the primal weight every 3000 iterations, together with the safeguard setting $\text{Iter}_\text{AA}/3$ yields the best performance. 
 We also adopt this parameter choice in the final numerical experiments comparing with rPDHG.
 }
 \begin{figure}[htb!]
    \centering
    \begin{subfigure}{0.4\linewidth}
        \centering
         \includegraphics[width=\linewidth]{./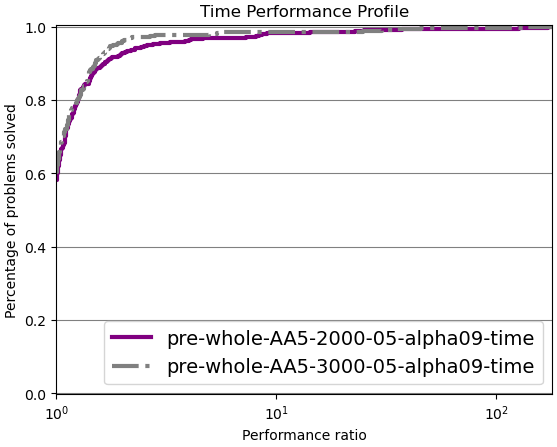}
        \caption{Update $\widehat{D}$ under different primal weight update period.}
    \end{subfigure}
    \hfill
    \begin{subfigure}{0.4\linewidth}
     \includegraphics[width=\linewidth]{./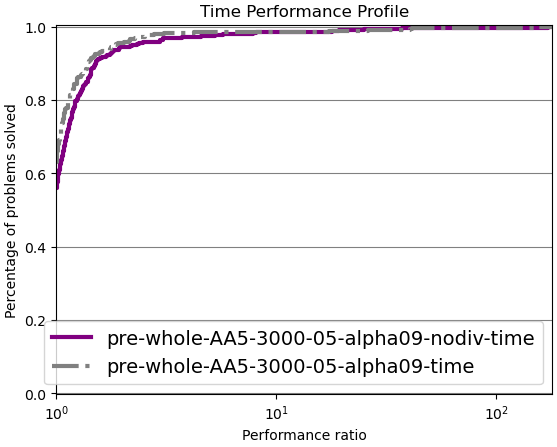}
        \caption{Update $\widehat{D}$ under different safeguard strategy.}
    \end{subfigure}
    \caption{  {Time performance profiles for AA-PDHG ($m_A = 5$) with $\widehat{D}$ update.}}
    \label{fig:pwD}
\end{figure}

\begin{figure}[htb!]
    \centering
    \begin{subfigure}{0.4\linewidth}
        \centering
         \includegraphics[width=\linewidth]{./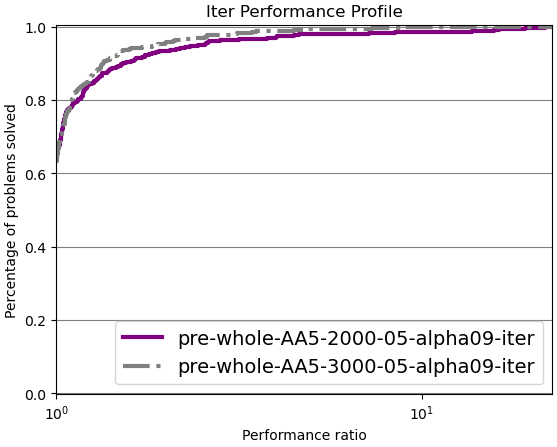}
        \caption{Update $\widehat{D}$ under different primal weight update period.}
    \end{subfigure}
    \hfill
    \begin{subfigure}{0.4\linewidth}
     \includegraphics[width=\linewidth]{./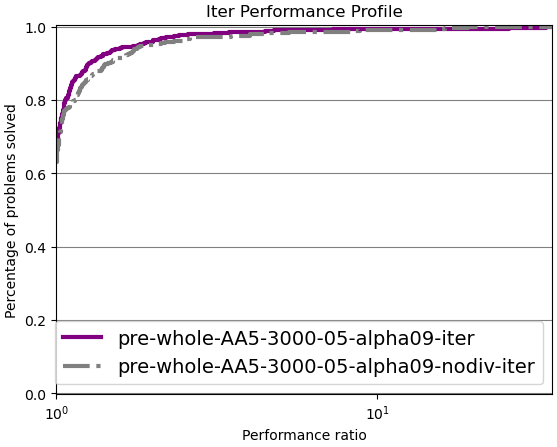}
        \caption{Update $\widehat{D}$ under different safeguard strategy.}
    \end{subfigure}
    \caption{  {Iteration performance profiles for AA-PDHG ($m_A = 5$) with $\widehat{D}$ update.}}
    \label{fig:pwD_iter}
\end{figure}

\end{document}